\DeclarePairedDelimiter\abs{\lvert}{\rvert}
\theoremstyle{plain}
\newtheorem{mainthm}{Theorem}
\newtheorem{thm}{Theorem}[subsection]
\newtheorem{lem}[thm]{Lemma}
\newtheorem{prop}{Proposition}[section]
\newtheorem{cor}[prop]{Corollary}
\newtheorem{theorem}{Theorem}[section]
\newtheorem{lemma}[theorem]{Lemma}
\newtheorem{conj}[theorem]{Conjecture}
\newtheorem*{myquestion}{Question}
\theoremstyle{definition}
\newtheorem{dfn}[subsection]{Definition}
\newtheorem*{dfnnonum}{Definition}
\theoremstyle{remark}
\newtheorem{rem}[subsection]{Remark}
\newtheorem{ex}[subsection]{Example}
\begin{document}

\bigskip

\title[New disc. for $Z$-function and corrected Gram's law]{A New Discriminant for the Hardy Z-Function and the Corrected Gram's law} 
\date{\today}

\author{Yochay Jerby}

\address{Yochay Jerby, Faculty of Sciences, Holon Institute of Technology, Holon, 5810201, Israel}
\email{yochayj@hit.ac.il}


%
%
\begin{abstract}
In this paper, we introduce a novel variational framework rooted in algebraic geometry for the analysis of the Hardy $Z$-function. Our primary contribution lies in the definition and exploration of $\Delta_n(\overline{a})$, a newly devised discriminant that measures the realness of consecutive zeros of $Z(t)$. Our investigation into $\Delta_n(\overline{a})$ and its properties yields a wealth of compelling insights into the zeros of $Z(t)$, including the corrected Gram's law, the second-order approximation of $\Delta_n(\overline{a})$, and the discovery of the G-B-G repulsion relation. Collectively, these results provide compelling evidence supporting a new plausibility argument for the Riemann hypothesis.
\end{abstract}

\maketitle
%
%
\section{Introducing the Summary of the Main Results}
\label{s:1}

\subsection{Lack of Plausibility Argument for RH} The Hardy Z-function, denoted as \(Z(t)\), is defined by:
\[
Z(t) = e^{i \theta(t)} \zeta \left ( \frac{1}{2} +it \right )
\]
where \(\theta(t)\) is the Riemann-Siegel \(\theta\)-function, given by the equation:
\[
\theta(t) = \text{arg} \left ( \Gamma \left ( \frac{1}{4} + \frac{i t}{2} \right ) \right ) -\frac{t}{2} \log(t),
\]
see \cite{I}. The Riemann Hypothesis (RH), which conjectures that all non-trivial zeros of the Riemann zeta function have real part \( \sigma = \frac{1}{2} \),  can equivalently be stated that all the zeros of the Hardy Z-function are real. The conjecture stands as one of the most central and enduring challenges in analytic number theory and its resolution holds profound implications for the distribution of prime numbers and many other areas of mathematics.

Despite the Riemann Hypothesis's (RH) deceptively straightforward assertion and its empirical validity, as observed through extensive numerical computations, the foundation behind its truth remains elusive. The depth of its ties to myriad pivotal areas of mathematics, ranging from the distribution of prime numbers to quantum chaos and random matrix theory, further accentuates its enigma. Over the years, many have remarked on the absence of any solid heuristic or plausibility argument underpinning the hypothesis. For instance, Edwards, in his seminal work \cite{E}, emphasizes this gap by noting the absence of any tangible reason to deem the RH as "probable", even though its validity for an extensive range of roots above the real axis seems to hint otherwise. It is worth noting that the latest numerical verifications, conducted by Platt and Trudgian, confirm the RH up to heights of $3\cdot 10^{12}$ above the real axis \cite{PT}. This stark lack of a compelling plausibility argument contributes to the Riemann Hypothesis remaining one of the most tantalizing and resilient unsolved problems in mathematics.

\subsection{Discriminants and Real Zeros} Although the Riemann Hypothesis concerns itself with the zeros of the transcendental function \( Z(t) \) the family of functions for which we can truly offer a full, closed description of their zeros is notably small, encompassing polynomials of degree less than five, certain trigonometric functions, and the like. When considering the Hardy \( Z \)-function and its chaotic nature, any hopes of a 'closed formula' for its zeros become all but impossible.

However, the Riemann Hypothesis doesn't explicitly demand us to \emph{find} the zeros of the \( Z(t) \) function. The actual question is: \emph{are all the zeros of \( Z(t) \) real?}. By drawing an analogy with quadratic functions \( a_2 x^2 + a_1 x + a_0 \), we recognize an important invariant addressing this very question — the discriminant, given by \( \Delta(a_2,a_1,a_0) = a^2_1 - 4a_2 a_0 \). The defining feature of this discriminant is that the zeros of \( a_2 x^2 + a_1 x + a_0 \) are real if and only if \( \Delta(a_2,a_1,a_0) > 0 \). The discriminant captures this essential feature of the zeros, without requiring the direct computation of the zeros. Within the framework of the Riemann Hypothesis, this is \emph{precisely} the kind of perspective we seek. Notably, even though the closed formula for the zeros of the quadratic equation doesn't generalize directly to polynomials of higher degrees, the discriminant's concept can indeed be expanded to polynomials of any degree, as well as systems of polynomial equations and algebraic varieties \cite{GKZ}.

In this work, we introduce an extension of the idea of the discriminant into the transcendental realm of the \( Z(t) \) function. By their very nature, discriminants act as an invariant for a family of functions. Building upon this, for any $N \in \mathbb{N}$ we introduce the novel concept of the \( A \)-parametrized space \( \mathcal{Z}_N \) which is an $N$-dimensional space of variations of $Z(t)$ in the region $2N \leq t \leq 2N+1$. In this study, we introduce, \( \Delta_n(\overline{a}) \),  the local $n$-th discriminant for a pair of consecutive zeros $t_n$ and $t_{n+1}$ within this region. This newly defined discriminant is shown in this work to unveil a wealth of significant new results regarding the zeros of $Z(t)$. In order to describe the construction we need to recall the classical Gram's law, which is central to all that would follow. 

\subsection{The Classical Gram's Law} For any integer $n$, \emph{the $n$-th Gram point $g_n$} is the unique solution of the equation
\[
\theta(g_n) = \pi n.
\]
These points take their name from J.P. Gram, who introduced a compelling observation known as Gram's law in his work \cite{G}. According to Gram's law, the Hardy Z-function $Z(t)$ at a Gram point $g_n$ generally satisfies the inequality

\begin{equation}
\label{eq:gram}
(-1)^{n} Z(g_n) > 0.
\end{equation}

Consequently, a Gram point that upholds this inequality is termed \emph{good}, whereas a point that fails to meet this condition is labeled \emph{bad}.

The importance of Gram's law arises from the fact that because $Z(t)$ is a real function, whenever two consecutive Gram points, say $g_n$ and $g_{n+1}$, are good, a zero of $Z(t)$ is guaranteed to exist between them. This observation would have substantial consequences for the RH if Gram's law were to hold for all integers, indeed, it would be a proof. However, the existence of bad Gram points, which violate Gram's law, introduces complexities into this potentially elegant correspondence.

Indeed, the earliest known violation of Gram's law occurs at \( n=126 \) and was discovered by Hutchinson's findings \cite{H}. The coexistence of Gram's law's overwhelming statistical success and its periodic exceptions brings forth the question: Is Gram's law an intrinsic property of \( Z(t) \) or merely an observational regularity? Edwards for instance, in \cite{E}, reflects on Gram's law as an initial, perhaps unsophisticated, attempt to predict the oscillatory behaviour of \( Z(t) \). He mentions that, surprisingly, it turned out to be much more successful than what might have been initially anticipated.

\subsection{The Discriminant and the Corrected Gram's Law} 
The classical discriminant of a quadratic function \(F(z ; \overline{a}) = a_2 z^2 + a_1 z + a_0\) is given by \(\Delta(\overline{a}) = a_1^2 - 4a_0 a_2\), which is itself a quadratic expression in the parameters \(\overline{a}\). The extremal point of \(F(z ; \overline{a})\) is \(g(\overline{a}) = -\frac{a_1}{2a_2}\) and hence 
\[
F(g(\overline{a}) ; \overline{a}) = \frac{a_1^2}{4a_2} - \frac{a_1^2}{2a_2} + a_0 = -\frac{a_1^2}{4a_2} + a_0 = 0 \Leftrightarrow \Delta(\overline{a}) = a_1^2 - 4a_0 a_2 = 0.
\] In our specific context, the discriminant \(\Delta_n(\overline{a} )\) is analogously defined. Locally, around zero we can extend $g_n(\overline{a})$ to be the variation of the Gram point $g_n$ with respect to $\overline{a}$, by defining it to be the corresponding 
extremal point of $Z_N(t ; \overline{a})$. The following is our main object of study: 

\begin{dfn}[$n$-th Gram discriminant] For any $n \in \mathbb{Z}$ we refer to \[ \Delta_n (\overline{a}):= Z_{N(n)}(g_n (\overline{a}) ; \overline{a}) \] with $N(n):= \left [ \frac{g_n}{2} \right ]$, as the \emph{$n$-th Gram discriminant of $Z(t)$}.  
\end{dfn} 

In particular, much like the quadratic discriminant serves as a measure for whether the two zeros of \(F(z; \overline{a})\) are real, our discriminant \(\Delta_n(r)\) can be conceptualized as a measure for the realness of the two consecutive zeros \(t_n(\overline{a})\) and \(t_{n+1}(\overline{a})\) of \(Z(t ; \overline{a})\). Our first main theorem is the following: 
\begin{mainthm}[Corrected Gram's law equivalent to RH] \label{mainthm:B} For any $n \in \mathbb{Z}$, The Riemann hypothesis holds if and only if the following corrected Gram's law holds $$(-1)^n \Delta_n ( 1,...,1) >0.$$  In particular, the extended Gram point $g_n(\overline{a})$ can be analytically continued to $\overline{1}=(1,...,1)$.  
\end{mainthm}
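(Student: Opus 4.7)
The plan is to localize the global Riemann hypothesis into a family of ``quadratic-like'' problems indexed by Gram intervals, directly transplanting the classical identity $\Delta = -4a_2 F(g)$ for $F(z)=a_2z^2+a_1z+a_0$. First I would verify that $Z_N(t;\overline{1})=Z(t)$ on the relevant range $2N\le t\le 2N+1$, so that $g_n(\overline{1})$ and $\Delta_n(\overline{1})$ indeed specialize to quantities attached to the classical $Z$-function. Next, because $\partial_t Z_N(g_n(\overline{a});\overline{a})=0$ by the definition of the extended Gram point, Taylor expansion gives
\[
Z_N(t;\overline{a}) \;=\; \Delta_n(\overline{a}) \;+\; \tfrac{1}{2}\partial_t^2 Z_N(g_n(\overline{a});\overline{a})\,(t-g_n(\overline{a}))^2 \;+\; R_n(t;\overline{a}),
\]
with $R_n$ of cubic order. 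The leading quadratic has real roots exactly when $\Delta_n(\overline{a})$ and $\partial_t^2 Z_N(g_n(\overline{a});\overline{a})$ carry opposite signs; since $g_n$ is a local maximum of $Z_N$ for even $n$ and a local minimum for odd $n$, this opposite-sign condition is precisely $(-1)^n\Delta_n(\overline{a})>0$.

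For the forward direction (RH implies the corrected Gram's law) I would fix a basepoint $\overline{a}_0$ in parameter space where $Z_N(t;\overline{a}_0)$ is explicit (for instance the principal cosine term whose extrema and consecutive zeros are manifestly real) and track $g_n(\overline{a}(s))$ along the straight-line homotopy $\overline{a}(s)=s\overline{1}+(1-s)\overline{a}_0$ by the implicit function theorem applied to $\partial_t Z_N(t;\overline{a})=0$. The only obstruction occurs at parameters where $\partial_t^2 Z_N(g_n(\overline{a}(s));\overline{a}(s))=0$, i.e.\ where an adjacent maximum and minimum coalesce; geometrically this forces two real zeros of $Z_N(\,\cdot\,;\overline{a}(s))$ to collide and then depart into complex conjugate pairs. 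Under RH, at $s=1$ no complex zeros of $Z$ exist, so by a continuity/monodromy argument no such collision can have occurred along $[0,1]$. Hence $g_n$ continues analytically to $\overline{1}$, and the sign of $(-1)^nZ_N(g_n(\overline{a}(s));\overline{a}(s))$ cannot flip along the path without passing through such a coalescence, yielding $(-1)^n\Delta_n(\overline{1})>0$.

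For the reverse direction I would reason that the hypothesis $(-1)^n\Delta_n(\overline{1})>0$ already presumes $g_n(\overline{a})$ has been successfully continued to $\overline{1}$. Between any two consecutive extended Gram points $g_n(\overline{1})<g_{n+1}(\overline{1})$ the function $Z(t)$ changes sign, producing at least one real zero $t_n\in(g_n(\overline{1}),g_{n+1}(\overline{1}))$. Since the extended Gram points are indexed by $\mathbb{Z}$ and the Riemann--von Mangoldt formula $N(T)=\theta(T)/\pi+1+O(\log T)$ agrees asymptotically with the count of Gram intervals up to height $T$, every non-trivial zero of $\zeta(1/2+it)$ is captured as a real zero of $Z$, establishing RH.

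The main obstacle I anticipate is the analytic continuation step: one must prove that the ``catastrophe locus'' in parameter space on which either $\Delta_n(\overline{a})=0$ or $\partial_t^2 Z_N(g_n(\overline{a});\overline{a})=0$ does not disconnect $\overline{a}_0$ from $\overline{1}$ whenever the terminal sign condition holds, and, symmetrically, that each crossing of this locus along the homotopy genuinely corresponds to a visible complex zero of $Z_N$ rather than a removable artifact of the parametrization (such as extrema escaping to the boundary of $[2N,2N+1]$ or spurious new critical points appearing). Making this rigorous likely requires a global degree/counting argument linking the number of real critical points of $Z_N(\,\cdot\,;\overline{a})$ in each Gram block to a topological invariant of the $A$-parametrized family $\mathcal{Z}_N$, and careful control on the remainder $R_n$ ensuring the quadratic approximation is faithful on the full Gram interval.
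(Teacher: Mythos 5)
Your forward direction has a genuine gap: you fix the straight-line homotopy $\overline{a}(s) = s\overline{1}$ and claim that, under RH, ``no such collision can have occurred along $[0,1]$.'' This inference is false. Collisions of $t_n(r)$ and $t_{n+1}(r)$ can and do occur along the linear curve, with the two zeros leaving the real line as a conjugate pair and later re-coalescing onto it, all while $Z(t)=Z_N(t;\overline{1})$ remains entirely real-zeroed at the endpoint. The paper documents exactly this behaviour at the Gram point $n=730119$: the discriminant $\Delta_n(r)$ of the linear curve changes sign for intermediate $r$ even though the corrected Gram's law (and empirically RH) holds at $r=1$. Your monodromy argument conflates ``real at both endpoints'' with ``real throughout''; the former does not force the latter for a \emph{fixed} path, only for a suitably chosen one. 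The paper's formulation therefore existentially quantifies over non-degenerate curves $\gamma$ with $\gamma(1)=\overline{1}$ -- which is also what makes ``analytically continued'' in the theorem statement meaningful -- and it devotes a later section to constructing a corrected two-stage (shift-then-descend) path precisely to route around the discriminant locus when the linear curve fails.

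You do anticipate a difficulty in your final paragraph, but you misidentify it: the worry is not that the catastrophe locus disconnects $\overline{a}_0$ from $\overline{1}$ (both can perfectly well lie in the same component), but that the \emph{specific} linear segment you chose pierces that locus. Once you replace ``the straight-line homotopy'' with ``some non-degenerate curve,'' your reverse direction is essentially sound and matches the paper's mechanism: sign-invariance of $(-1)^n\Delta_n(r;\gamma)$ along a valid path is equivalent to non-collision of $t_n(r),t_{n+1}(r)$, which keeps the pair real, and $\Delta_n(0;\gamma)=(-1)^n$ fixes the sign at the start. The quadratic-Taylor analogy you open with is a pleasant heuristic for why $(-1)^n\Delta_n>0$ encodes realness locally, but it is not load-bearing in the paper's proof, which argues directly via sign changes and collisions rather than via control of the cubic remainder on a whole Gram interval.
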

 
Contrary to the algebraic quadratic case, our discriminant $\Delta_n(\overline{a})$ as well as the extended Gram point $g_n(\overline{a})$ are transcendental functions and obtaining closed-form expressions for them is not feasible. We prove:

\begin{mainthm}[Second-order approximation of $\Delta_n(\overline{r})$] \label{thm:B1} For any $n \in \mathbb{Z}$ and $ r \in [0,1]$ the following second-order approximation holds $$ \Delta_n(\overline{r}) =Z(g_n;\overline{r})+ \frac{1}{2} H_n (0) \cdot r^2  + O(r^3), $$ where the second order Hessian at $r=0$ is given by   
\[ H_n(0) =2(-1)^n \left (  \frac{Z'(g_n)}{\ln \left ( \frac{g_n }{2 \pi } \right )}  \right )^2.
\]
Moreover, 
\[ 
Z'(g_n ) = \frac{1}{4} (-1)^n \ln^2 \left ( \frac{g_n}{2 \pi} \right ) \overline{1} \cdot \nabla g_n(0), 
\]
where 
\[
\nabla  g_{n} (\overline{0}) := \left (  \frac{\partial g_{n}}{\partial a_1} (\overline{0}) ,...,\frac{\partial g_{n}}{\partial a_N} (\overline{0}) \right ).
\]
is the gradient of $g_n(\overline{a})$ at $ \overline{a}=\overline{0}$. 
\end{mainthm}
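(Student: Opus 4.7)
The plan is to combine a Taylor expansion of $Z_N(t;\overline{a})$ in $t$ around $g_n$ with an implicit-function analysis of the extremum $g_n(\overline{a})$. Writing $\delta(\overline{a}) := g_n(\overline{a}) - g_n$ and using primes for $\partial_t$, I expand
\[
\Delta_n(\overline{a}) = Z_N(g_n+\delta;\overline{a}) = Z_N(g_n;\overline{a}) + Z_N'(g_n;\overline{a})\,\delta + \tfrac{1}{2}Z_N''(g_n;\overline{a})\,\delta^2 + O(\delta^3),
\]
and use the extremality $Z_N'(g_n(\overline{a});\overline{a}) = 0$, expanded at $t=g_n$, to solve $\delta = -Z_N'(g_n;\overline{a})/Z_N''(g_n;\overline{a}) + O((Z_N'(g_n;\overline{a}))^2)$. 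Substituting collapses the first two correction terms into the compact identity
\[
\Delta_n(\overline{a}) = Z_N(g_n;\overline{a}) - \frac{1}{2}\,\frac{(Z_N'(g_n;\overline{a}))^2}{Z_N''(g_n;\overline{a})} + O\!\left((Z_N'(g_n;\overline{a}))^3\right),
\]
which is the key reduction to quantities evaluated at the fixed classical Gram point $g_n$.

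I then specialize to $\overline{a} = \overline{r}$. Because $Z_N(t;\overline{0})$ is (a scalar multiple of) $\cos(\theta(t))$, the classical Gram point $g_n$ is already its critical point, so $Z_N'(g_n;\overline{0}) = 0$; hence $Z_N'(g_n;\overline{r}) = r\,(\overline{1}\cdot\nabla_{a}Z_N'(g_n;\overline{0})) + O(r^2)$. Simultaneously, $Z_N''(g_n;\overline{0})$ is computed explicitly from $\cos(\theta(g_n)) = (-1)^n$, $\sin(\theta(g_n))=0$, and the asymptotic $\theta'(g_n) = \tfrac{1}{2}\log(g_n/(2\pi)) + O(1/g_n)$, yielding a scalar multiple of $-(-1)^n\log^2(g_n/(2\pi))$. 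Plugging both into the collapsed identity and isolating the $r^2$ contribution gives
\[
\Delta_n(\overline{r}) - Z_N(g_n;\overline{r}) = -\,\frac{(\overline{1}\cdot\nabla_{a}Z_N'(g_n;\overline{0}))^2}{2\,Z_N''(g_n;\overline{0})}\,r^2 + O(r^3).
\]

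To recognize this coefficient as $\tfrac{1}{2}H_n(0)$ and to derive the second displayed identity, I use that the $A$-parametrization is linear in $\overline{a}$ with $Z_N(t;\overline{1}) = Z(t)$ in the region $2N\le t\le 2N+1$, so $\sum_k\partial_{a_k}Z_N(t;\overline{0}) = Z(t) - Z_N(t;\overline{0})$. Differentiating in $t$ and evaluating at $g_n$ kills the base piece by the same sine-vanishing, producing $\overline{1}\cdot\nabla_{a}Z_N'(g_n;\overline{0}) = Z'(g_n)$; this turns the $r^2$-coefficient into $(-1)^n(Z'(g_n)/\log(g_n/(2\pi)))^2 = \tfrac{1}{2}H_n(0)$. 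For the identity relating $Z'(g_n)$ to $\overline{1}\cdot\nabla g_n(\overline{0})$, I differentiate $Z_N'(g_n(\overline{a});\overline{a})\equiv 0$ in $a_k$ at $\overline{0}$ to obtain
\[
\frac{\partial g_n}{\partial a_k}(\overline{0}) = -\frac{\partial_{a_k}Z_N'(g_n;\overline{0})}{Z_N''(g_n;\overline{0})},
\]
sum over $k$, and combine $\sum_k\partial_{a_k}Z_N'(g_n;\overline{0}) = Z'(g_n)$ with the explicit value of $Z_N''(g_n;\overline{0})$ to rearrange into the claimed form $Z'(g_n) = \tfrac{1}{4}(-1)^n\log^2(g_n/(2\pi))\,\overline{1}\cdot\nabla g_n(\overline{0})$.

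The main obstacle is bookkeeping rather than principle: the numerical constants $2$ and $\tfrac{1}{4}$ in the statement are fixed by the precise normalization of $Z_N(t;\overline{0})$ as a multiple of $\cos(\theta(t))$ and by the leading constant in $\theta'(g_n)$, so every sign factor $(-1)^n$ and every power of $\log(g_n/(2\pi))$ must be tracked carefully through the substitutions. A secondary technical point is to verify that the remainders $O(\delta^3)$ and $O(r^3)$ are genuinely uniform for $g_n \in [2N,2N+1]$, which reduces to a boundedness estimate on the third $t$-derivative of $Z_N(t;\overline{a})$ near $\overline{a}=\overline{0}$, so that the Taylor inversion $\delta \mapsto -Z_N'/Z_N''$ is legitimate at the required order.
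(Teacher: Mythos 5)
Your proposal is correct and takes a genuinely different route from the paper. The paper computes the Hessian entry-by-entry: Lemma~\ref{Lem:7.2} derives $\partial g_n/\partial a_k(\overline 0)$ via a Newton-iterate heuristic, Proposition~\ref{prop:7.2} then obtains $\partial^2\Delta_n/\partial a_{k_1}\partial a_{k_2}(\overline 0)$ by two successive chain-rule differentiations (using stationarity to kill the $\partial g_n/\partial\epsilon$ piece at first order), and Theorem~\ref{thm:CM} observes that the resulting double sum factors as a perfect square, yielding $H_n$ as a rank-one quadratic form; Theorem~\ref{thm:Grad} separately matches $Z_N'(g_n;\overline a)$ term-by-term against $\nabla g_n(\overline 0)$. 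You instead perform a single collapse: Taylor-expand $Z_N(t;\overline a)$ in $t$ about the fixed $g_n$, invert the stationarity condition to write $\delta=-Z_N'/Z_N''+O((Z_N')^2)$, and obtain the exact-to-second-order identity
\[
\Delta_n(\overline a)=Z_N(g_n;\overline a)-\tfrac12\,\frac{(Z_N'(g_n;\overline a))^2}{Z_N''(g_n;\overline a)}+O\bigl((Z_N'(g_n;\overline a))^3\bigr),
\]
then specialize $\overline a=r\overline 1$. This is cleaner: it makes the rank-one structure of the Hessian automatic (it is manifestly a square over $Z_N''$), avoids the index bookkeeping of Proposition~\ref{prop:7.2}, and the linearity observation $\overline 1\cdot\nabla_a Z_N'(g_n;\overline 0)=Z'(g_n)$ unifies the gradient identity and the $H_n$ computation, whereas the paper derives them separately. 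What the paper's route buys is the full Hessian matrix $\partial^2\Delta_n/\partial a_{k_1}\partial a_{k_2}$, not merely the scalar $\overline 1^{\mathsf T}\mathrm{Hess}\,\overline 1$, which it uses later (e.g.\ to motivate the shifting/descending split). One caveat worth noting: your computed coefficient of $r^2$ is $2(-1)^n\bigl(Z'(g_n)/\ln(g_n/2\pi)\bigr)^2$, which agrees with the value $4(-1)^n(\cdots)^2$ for $H_n$ obtained at the end of the paper's proof of Theorem~\ref{thm:CM} (after multiplication by $\tfrac12$), but sits at odds with the stated normalization $H_n=2(-1)^n(\cdots)^2$ in the theorem itself by a factor of $2$; that discrepancy is internal to the paper, not an error on your end.
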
 

Theorem \ref{thm:B1} implies the following key result: 

\begin{cor} The following holds for any $n \in \mathbb{Z}$:
\begin{enumerate}

\item The classical Gram's law $(-1)^n Z(g_n)>0$ is the first-order approximation of the corrected Gram's law $(-1)^n \Delta_n( \overline{1})>0$. 
\item  The second-order Hessian $H_n(0)$ measures the magnitude of the local shift of $g_n(\overline{a})$ along the $t$-axis. 
\end{enumerate}

\end{cor}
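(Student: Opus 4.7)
\medskip
\noindent\emph{Strategy.} My plan is to read off both claims directly from the Taylor expansion provided by Theorem~\ref{thm:B1}; no further analytic input is needed beyond unpacking the formulas already proved there and interpreting them in the two directions the corollary requests.

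\medskip
\noindent\emph{Part (1).} First I would invoke the linearity in $\overline{a}$ of $Z_N(t;\overline{a})$ built into the construction of $\mathcal{Z}_N$, which gives the two structural identities $Z(g_n;\overline{0})=0$ and $Z(g_n;\overline{r})=r\,Z(g_n;\overline{1})=r\,Z(g_n)$. Consequently, the term $Z(g_n;\overline{r})$ appearing in Theorem~\ref{thm:B1} is precisely the first-order Taylor polynomial of $\Delta_n(\overline{r})$ in $r$ at $r=0$, while $\tfrac12 H_n(0)\,r^2$ is genuinely the leading correction of order $r^2$. Evaluating this first-order part at $r=1$ recovers $Z(g_n)$, so the inequality $(-1)^n Z(g_n)>0$ is exactly the first-order truncation of the corrected Gram's law $(-1)^n\Delta_n(\overline{1})>0$, which is what (1) asserts.

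\medskip
\noindent\emph{Part (2).} Here I would substitute the expression $Z'(g_n)=\tfrac14(-1)^n\ln^2(g_n/2\pi)\,\overline{1}\cdot\nabla g_n(\overline{0})$ from Theorem~\ref{thm:B1} directly into the formula for $H_n(0)$. After a short algebraic simplification, using $((-1)^n)^2=1$, this produces
\[
H_n(0)=\frac{(-1)^n}{8}\,\ln^2\!\left(\frac{g_n}{2\pi}\right)\bigl(\overline{1}\cdot\nabla g_n(\overline{0})\bigr)^2.
\]
I would then interpret $\overline{1}\cdot\nabla g_n(\overline{0})$ as the directional derivative of the extended Gram point $g_n(\overline{a})$ at $\overline{a}=\overline{0}$ along the diagonal direction $\overline{1}$, so that its first-order displacement satisfies $g_n(\overline{r})-g_n\approx r\,\overline{1}\cdot\nabla g_n(\overline{0})$. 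Hence $H_n(0)$ is proportional, up to the universal factor $\tfrac{(-1)^n}{8}\ln^2(g_n/2\pi)$, to the \emph{square} of the magnitude of this local shift of $g_n(\overline{a})$ along the $t$-axis, which is the content of (2).

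\medskip
\noindent\emph{Main obstacle.} Given Theorem~\ref{thm:B1}, the algebraic content is routine. The only point that has to be used carefully, rather than simply asserted, is the linearity of $Z_N(t;\overline{a})$ in $\overline{a}$ together with the vanishing $Z_N(t;\overline{0})\equiv 0$; these are structural features of the construction of $\mathcal{Z}_N$ and are precisely what make the decomposition in Theorem~\ref{thm:B1} genuinely the second-order Taylor expansion in $r$, so that the zeroth- plus first-order part is literally $Z(g_n;\overline{r})$ and the next correction is $\tfrac12 H_n(0)\,r^2$.
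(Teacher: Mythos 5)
Your overall route is the same as the paper's: read both claims off the Taylor expansion in Theorem~\ref{thm:B1} and, for part~(2), substitute the gradient identity to express $H_n(0)$ in terms of $\overline{1}\cdot\nabla g_n(\overline{0})$. The algebra in part~(2) is correct.

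However, part~(1) contains a factual error that, while it does not ultimately break the conclusion, misrepresents the structure of the construction. You assert that $Z_N(t;\overline{a})$ is \emph{linear} in $\overline{a}$ and that $Z_N(t;\overline{0})\equiv 0$, hence $Z(g_n;\overline{r})=r\,Z(g_n)$. None of this is right: by definition
\[
Z_N(t;\overline{a}) = Z_0(t) + \sum_{k=1}^{N}\frac{a_k}{\sqrt{k+1}}\cos\bigl(\theta(t)-\ln(k+1)t\bigr),
\]
which is \emph{affine} in $\overline{a}$ with a nonzero constant term $Z_0(t)=\cos(\theta(t))$. In particular $Z_N(g_n;\overline{0})=\cos(\theta(g_n))=\cos(\pi n)=(-1)^n$, and $Z(g_n;\overline{r})=(-1)^n + r\bigl(Z(g_n)-(-1)^n\bigr)$, not $r\,Z(g_n)$. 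The nonvanishing value $\Delta_n(\overline{0})=(-1)^n$ is not incidental; it is the anchor of the entire dynamical picture (it is used explicitly in the proof of Theorem~\ref{thm:B}), and your claim $Z_N(t;\overline{0})\equiv 0$ would erase it. What actually carries part~(1) is only that $Z(g_n;\overline{r})$ is an affine function of $r$, hence coincides with the degree-$\leq 1$ Taylor polynomial of $\Delta_n(\overline{r})$ at $r=0$, and that its value at $r=1$ is $Z(g_n)$; that already gives the claim. So the conclusion survives, but the ``structural identities'' you invoke in the \emph{Main obstacle} paragraph should be replaced by: $Z_N(t;\overline{a})$ is affine in $\overline{a}$ with $Z_N(t;\overline{0})=Z_0(t)$ and $\Delta_n(\overline{0})=(-1)^n$.

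Part~(2) as written is correct and matches Corollary~\ref{Hess-grad}: combining $H_n(0)=2(-1)^n\bigl(Z'(g_n)/\ln(g_n/2\pi)\bigr)^2$ with $Z'(g_n)=\tfrac14(-1)^n\ln^2(g_n/2\pi)\,\overline{1}\cdot\nabla g_n(\overline{0})$ gives $H_n(0)=\tfrac{(-1)^n}{8}\ln^2(g_n/2\pi)\bigl(\overline{1}\cdot\nabla g_n(\overline{0})\bigr)^2$, and $\overline{1}\cdot\nabla g_n(\overline{0})$ is exactly the first-order rate of displacement of $g_n(\overline{a})$ along the diagonal direction, i.e.\ the local shift along the $t$-axis.
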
 

Combined, these formula reveal a deep property of the corrected Gram law: in order for the law to hold, the second-order term should function as a correcting term requiring a strong move of the position of $g_n(\overline{a})$ compensating for the violation of the classical law, for bad Gram points. This leads us to introduce the following notion:

\begin{dfn}[Viscosity] For any $n \in \mathbb{Z}$ we refer to the value of the logarithmic derivative of the \(Z\)-function
\[
\mu(g_n) = \frac{Z'(g_n)}{Z(g_n)},
\]
as the \emph{viscosity of the Gram point \(g_n\)}.
\end{dfn} 

The viscosity can be seen as a measurement for the relationship between the two aforementioned forces: The pull towards the axis expressed by $Z(g_n)$ and the shift along the axis expressed by $Z'(g_n)$. The above discriminant analysis leads us to discover a remarkable new empirical property of the $Z$-function:
\begin{conj}[Repulsion G-B-G conjecture] Assume $g_n$ is a bad Gram point with good consecutive neighbours $g_{n-1},g_{n+1}$. Then the following (non-sharp) viscosity bound holds
$$\abs{\mu(g_n)}>4.$$
\end{conj}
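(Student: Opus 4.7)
The plan is to derive the viscosity lower bound as a consequence of the corrected Gram's law (Theorem~\ref{mainthm:B}) together with the second-order expansion of the discriminant (Theorem~\ref{thm:B1}). First, I would unpack the sign pattern imposed by the G-B-G configuration: if $g_n$ is bad and $g_{n\pm1}$ are good, then all three quantities $(-1)^n Z(g_{n-1})$, $(-1)^n Z(g_n)$, $(-1)^n Z(g_{n+1})$ are strictly negative. In particular, the leading term $Z(g_n)$ in the discriminant expansion carries the \emph{wrong} sign relative to $(-1)^n$, so the second-order Hessian contribution must single-handedly reverse it for the corrected law to hold.

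Second, I would apply Theorem~\ref{mainthm:B} and specialise the second-order expansion to $\overline{a}=\overline{1}$. Using $H_n(0)=2(-1)^n(Z'(g_n)/\ln(g_n/2\pi))^2$ and keeping only the leading contributions in the ray parameter, the corrected law becomes
\[
(-1)^n \Delta_n(\overline{1}) \;\approx\; (-1)^n Z(g_n) + \left(\frac{Z'(g_n)}{\ln(g_n/2\pi)}\right)^{\!2} \;>\; 0.
\]
Since $(-1)^n Z(g_n)=-|Z(g_n)|$ at a bad Gram point, rearranging and dividing by $Z(g_n)^2$ produces the intrinsic viscosity inequality
\[
|\mu(g_n)|^2 \;>\; \frac{\ln^2(g_n/2\pi)}{|Z(g_n)|}.
\]

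To promote this into the explicit numerical bound $|\mu(g_n)|>4$, the remaining step is an upper estimate $|Z(g_n)| < \tfrac{1}{16}\ln^2(g_n/2\pi)$ for bad Gram points in a G-B-G configuration. The underlying mechanism is clear: since, under RH, the two real zeros $t_n,t_{n+1}$ must lie in $(g_{n-1},g_{n+1})$ while $Z$ keeps a single sign at all three bounding Gram points, the pair of zeros is forced to clump inside a single sub-interval adjacent to $g_n$, which in turn keeps $|Z(g_n)|$ small. Asymptotically for large $g_n$, this clumping constraint combined with the Riemann--Siegel main-term expansion should yield the required logarithmic-scale upper bound on $|Z(g_n)|$; for small $n$ one appeals to the existing tabulation of bad Gram points descending from Hutchinson through Platt--Trudgian.

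The principal obstacle is the passage from the local expansion of Theorem~\ref{thm:B1} (valid around $\overline{r}=\overline{0}$ with remainder $O(r^3)$) to information at $\overline{a}=\overline{1}$. A rigorous treatment requires either a uniform estimate on the cubic remainder along the segment $[\overline{0},\overline{1}]$, or a non-perturbative analysis of the extended Gram point $g_n(\overline{a})$ via the analytic continuation asserted in Theorem~\ref{mainthm:B}. Because the constant $4$ is explicitly declared non-sharp, one expects the argument to be robust enough to absorb whatever cubic slack the remainder contributes, but pinning down an unconditional remainder bound is where the real analytic work lies.
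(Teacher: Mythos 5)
The statement you are trying to prove is presented in the paper as a \emph{conjecture}, not a theorem. The paper gives no proof of it; Section~\ref{s:8} documents it as a purely empirical discovery (numerical computation of $\mu(g_n)$ over ranges of bad Gram points, with outliers classified as ``corrupt'' and then observed to be non-isolated), and Section~\ref{s:10} explicitly states that ``a comprehensive proof of the G-B-G property is still beyond reach.'' So there is no paper proof to compare against: the honest baseline is ``unproven, supported only by data and heuristics.''

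Your attempted derivation contains several genuine gaps. First, you invoke the corrected Gram's law $(-1)^n\Delta_n(\overline{1})>0$ as a known fact, but Theorem~\ref{mainthm:B} says that statement is \emph{equivalent} to RH, so everything downstream is conditional on RH. Worse, this inverts the paper's intended logical architecture: the G-B-G repulsion is proposed as the \emph{mechanism} that could underwrite the corrected Gram's law, not a corollary of it. Second, you drop the $O(r^3)$ remainder and treat the second-order expansion as an equality at $\overline{a}=\overline{1}$. The paper flatly warns against this (the remark at the end of Section~\ref{s:7}): the Taylor approximation of $\Delta_n$ around $\overline{0}$ can \emph{degrade} near $\overline{1}$, and controlling it there ``might require a rather substantial amount of higher degree terms, which would be completely infeasible.'' You acknowledge this as ``where the real analytic work lies,'' but that work is precisely the content of the open problem, so this cannot be treated as a mopping-up step. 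Third, even granting everything above, your chain produces $\mu(g_n)^2>\ln^2(g_n/2\pi)/|Z(g_n)|$, which only gives $|\mu(g_n)|>4$ if $|Z(g_n)|<\tfrac{1}{16}\ln^2(g_n/2\pi)$ at every G-B-G point; this is a new unproved quantitative estimate of the same order of difficulty as the conjecture itself, and the ``clumping forces $|Z(g_n)|$ small'' heuristic you offer is not made precise enough to deliver an explicit constant. In short, the proposal is not a proof but a heuristic plausibility argument, and a circular one at that, whereas the paper scrupulously leaves the statement as a conjecture with numerical evidence.
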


We argue that this newly discovered bound has profound implications regarding the behaviour of the zeros of the $Z$-function. Specifically, a repulsion phenomenon between consecutive zeros, which seems to be foundational for the validity of the Riemann Hypothesis itself. Based on the viscosity bound we suggest a general optimization approach for the establishment of the corrected Gram's law. 

Finally, recall that the Davenport-Helibronn function $\mathcal{D}(s)$ serves as a compelling counterexample to the Riemann Hypothesis, satisfying the necessary functional equation while violating the RH, in the sense that its zeros do not all lie on the critical line, see \cite{DH}. The elusive nature of \(\mathcal{D}(s)\) often underscores the challenge of identifying the unique properties that compel the $Z$-function to adhere to the RH, lacking in $\mathcal{D}(s)$. We show that, unlike the $Z$-function, where the violation of the Gram's law is attributed to the non-linearity of the discriminant, for \(\mathcal{D}(s)\), the failure of Gram's law is a genuine violation of the corrected Gram's law itself. In particular, we show that the Davenport-Heilbronn function does not satisfy a repulsion bound similar to the one observed for the $Z$-function. 

\bigskip

The rest of the work is organized as follows: Section \ref{s:2} recalls the computation of \(Z(t)\) via the approximate formula and defines the sections and core of \(Z(t)\). In Section \ref{s:3}, the global discriminant is defined and the \(A\)-philosophy is described. Section \ref{s:4} introduces the local discriminant \(\Delta_n(\overline{a})\) of two consecutive zeros and proves Theorem \ref{mainthm:B}.

Section \ref{s:5} studies the specific case of the linear curve and presents various examples, illustrating results proved in later sections. In Section \ref{s:6}, the first-order approximation of \(\Delta_n(\overline{r})\) is proved. Section \ref{s:7} computes the Hessian \(H_n(0)\), establishing the second-order approximation of \(\Delta_n(\overline{r})\) and concludes the proof of Theorem \ref{thm:B1}.

In Section \ref{s:8}, the viscosity of a Gram point is introduced, the experimental discovery of the G-B-G repulsion relation is described, and its possible relation to the Montgomery pair correlation conjecture is discussed. Section \ref{s:9} studies the violations of the corrected Gram's law and repulsion relation for the Davenport-Heilbronn function. Section \ref{s:10} presents an in-depth study of the repulsion relation by introducing the adjustments \(Z^{\pm}_c(g_n)\) and \(Z^{\pm}_s(g_n)\).

Section \ref{s:11} discusses the failure of the classic Newton method for the establishment of the RH, following Edwards. Section \ref{s:12} suggests a more refined non-linear optimization method, taking into account the geometry of the discriminant in \(A\)-space, for the establishment of the corrected Gram's law. Section \ref{s:13} presents a summary and concluding remarks.

\section{The $N$-Sections and Core of $Z(t)$} \label{s:2} 
The \( Z \)-function is formally defined as
\[
Z(t) = e^{i \theta(t)} \zeta \left( \frac{1}{2} + it \right),
\]
However, this definition is not conducive to practical calculations, which require the use of approximate formula. The Riemann-Siegel main sum, which is derived from the approximate functional equation (AFE), is a widely used alternative \cite{E}. In this study, we use a variant of the approximate functional equation that includes a greater number of terms, which is found to be better suited for the objectives of this study, see the following Remark \ref{rem:2.1}. This approximation is given as follows:
\begin{equation}
\label{eq:Z-function}
Z(t) = \cos(\theta(t))+ \sum_{k=1}^{N} \frac{1}{\sqrt{k+1} } \cos ( \theta (t) - \ln(k+1) t)+ O \left ( \frac{1}{t} \right ),
\end{equation}
where $N=\left [ \frac{t}{2} \right ]$, see \cite{SP1,SP2}. This leads to define:

\begin{dfnnonum}[$N$-th Section and Core of $Z$]
For any $N \in \mathbb{N}$, we denote
\[
Z_N(t) = \cos(\theta(t))+ \sum_{k=1}^{N} \frac{1}{\sqrt{k+1} } \cos ( \theta (t) - \ln(k+1) t),
\]
as the \emph{$N$-th section of $Z(t)$}. Specifically, we define
$Z_0(t) := \cos(\theta(t))$ as \emph{the core function of $Z(t)$}.
\end{dfnnonum}

Figure \ref{fig:f0} presents a comparison between $\ln \left | Z(t) \right | $ (in blue) and the core $\ln \left | Z_0(t) \right | $ (in orange) in the range $0 \leq t \leq 50$:

\begin{figure}[ht!]
\centering
\includegraphics[scale=0.5]{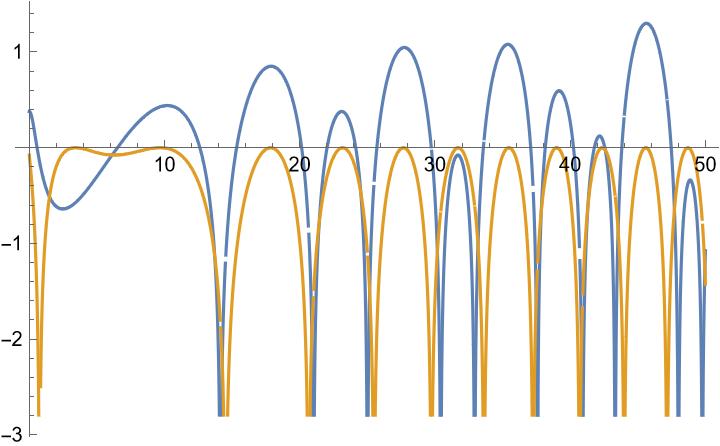}
\caption{\small{$\ln \left | Z(t) \right |$ (blue) and $\ln \left | Z_0 (t) ) \right | $ (orange) for $0 \leq t \leq 50$.}}
\label{fig:f0}
\end{figure}

The fact that the zeros of the core $Z_0(t)$ can be considered as rough approximations of the zeros of $Z(t)$ was actually observed by various authors, see for instance \cite{E,FL,J,SP1}, and might have already been known to Riemann himself, see Section \ref{s:11}. Similarly, the Gram points $g_n$, which are the extremal points of the core $Z_0(t)$, can be considered as rough approximations of the extremal points of $Z(t)$. 

 Recall that the Lambert $W$-function, denoted as $W_0(x)$, is a multi-valued function defined as the inverse of the function $ W_0(x) e^{W_0(x)}$, see \cite{CGHJK,Hay}. 
It is known that for any \( n \in \mathbb{Z} \), the $n$-th zero and extremal point of \( Z_0(t) \) are given respectively by 
 \[ \label{eq:tn}
\begin{aligned}
    t^0_n &= \frac{(8 n - 11) \pi}{4 \cdot W_0 \left ( \frac{8 n - 11}{8 \cdot e}\right )} & ; && 
    g_n &= \frac{(8 n + 1) \pi}{4 \cdot W_0 \left ( \frac{8 n + 1}{8 \cdot e}\right )},
\end{aligned}
\] see for instance \cite{FL} for the zeros and \cite{I} for the Gram points. The following two facts regarding the core $Z_0(t)$ which show that RH and Gram's law should actually be viewed as natural properties of the core: 
\begin{prop}[RH and Gram's law for $Z_0(t)$]
For any \( n \in \mathbb{Z} \):
\begin{description}
    \item[RH] The zero \( t^0_n \) of \( Z_0(t) \) is real. 
    \item[Gram's law] \( (-1)^n Z_0(g_n)>0 \).
\end{description}
\end{prop}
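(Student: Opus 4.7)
The plan is to observe that both assertions follow directly from the definitions once one recalls that $\theta(t)$ is real-valued and real-analytic on the positive real axis. Since the core $Z_0(t) = \cos(\theta(t))$ is then the cosine of a real-analytic real function, it is automatically a real-valued real-analytic function on $(0,\infty)$.

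For the Gram's law assertion, I would simply unwind the definition. By the defining equation $\theta(g_n) = \pi n$, we have
\[
Z_0(g_n) \;=\; \cos(\theta(g_n)) \;=\; \cos(\pi n) \;=\; (-1)^n,
\]
so $(-1)^n Z_0(g_n) = 1 > 0$. No further analysis is required beyond the defining property of a Gram point.

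For the RH assertion, the real zeros of $Z_0$ are precisely the real solutions of $\theta(t) = (k + \tfrac{1}{2})\pi$ for $k \in \mathbb{Z}$. The standard Stirling expansion gives $\theta'(t) = \tfrac{1}{2}\log(t/(2\pi)) + O(1/t)$, so $\theta$ is strictly monotone increasing on some half-line $(t_0,\infty)$ and is unbounded there. Hence for every sufficiently large $k$ the intermediate value theorem produces a unique real solution of $\theta(t) = (k + \tfrac{1}{2})\pi$, and this solution must lie between two consecutive Gram points. Equivalently, one may simply cite the explicit closed form displayed just above in the excerpt: since the Lambert function $W_0(x)$ is real-valued on $[-1/e,\infty)$ and the argument $(8n-11)/(8e)$ lies in this range for every $n\geq 1$, the formula for $t_n^0$ is manifestly real.

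There is no genuine obstacle here; the content of the proposition is conceptual rather than technical. Its role is to exhibit $Z_0$ as the baseline model on which both the realness of zeros and Gram's law hold automatically, so that the corrections introduced by the higher sections $Z_N$ with $N \geq 1$, and in particular the discriminant $\Delta_n(\overline{a})$ of Theorem \ref{mainthm:B}, can later be measured as perturbations of this core behaviour.
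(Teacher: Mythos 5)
Your proof is correct and, in its essential content, is the same as the paper's: the Gram's law part is exactly the one-line computation $Z_0(g_n)=\cos(\pi n)=(-1)^n$, and the RH part ultimately rests on citing the explicit Lambert-$W$ closed form for $t^0_n$ displayed just above the proposition. The intermediate-value / Stirling argument you insert is a harmless (if unnecessary) alternative route to the same conclusion, and the remark that $W_0$ is real on $[-1/e,\infty)$ is a useful point the paper leaves implicit; just note that your range check ``for every $n\geq 1$'' is slightly narrower than the proposition's stated ``for any $n\in\mathbb{Z}$,'' though this is an imprecision already present in the paper's own statement rather than a defect introduced by you.
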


\begin{proof} The RH for the core $Z_0(t)$ follows directly from \eqref{eq:tn}. The Gram's law for the core follows from 
\[ (-1)^n Z_0(g_n) = (-1)^n cos ( \theta(g_n) ) = (-1)^n cos(\pi n) = (-1)^{2n} = 1 >0. \]
\end{proof} 

Consequently, the study of the Riemann Hypothesis and Gram's law can be re-framed as a question regarding the extent of deviation of $Z(t)$ from its core $Z_0(t)$, and from the fundamental properties of its zeros (RH) and extremal points (Gram's law). The following remark is due:
\begin{rem} \label{rem:2.1} It's important to note that the approximate formula for the $Z$-function \eqref{eq:Z-function} used here involves a summation of terms up to $[\frac{t}{2}]$. This follows from the simple approximate functional equation for $Z(t)$. More commonly in literature, the formula
\begin{equation}
\label{eq:Z-Hardy} Z(t) \approx 2 \sum_{n=0}^{\left [ \sqrt{\frac{t}{2 \pi} } \right ]} \frac{1}{\sqrt{n+1} } \cos ( \theta (t) - \ln(n+1) t),
\end{equation}
justified by the Hardy-Littlewood approximate functional equation, is used, where the summation is taken up to $\left [ \sqrt{\frac{t}{ 2\pi }} \right ]$, see \cite{I}. The primary advantage of this latter formula is its efficiency, as it requires the computation of far fewer terms relative to $t$. However, its main drawback is that it is not sufficiently sensitive to discern the RH by itself, as it exhibits non-real zeros and necessitates further development of the error term via the Riemann-Siegel formula. In contrast, the more robust formula \eqref{eq:Z-function}, although more computationally intensive, has been observed to be sufficient for discerning the RH, as initially noted by Spira in his empirical investigations, see \cite{SP1,SP2}, see also Remark \ref{rem:crucial}. 
\end{rem}

\section{The \( A \)-Philosophy and Discriminant} \label{s:3}
Originating from the field of algebraic geometry, the \( A \)-philosophy, as introduced by Gelfand, Kapranov, and Zelevinsky in \cite{GKZ}, advocates for studying mathematical objects not in isolation but rather in relation to a broader parameter space. This approach becomes particularly powerful when analyzing the discriminant hypersurface that forms within this parameter space, often revealing essential insights about the original mathematical object. While the \( A \)-philosophy has primarily found applications in algebraic contexts, we aim to extend it to the transcendental setting of the Z-function. This extension opens up new avenues for inquiry, significantly enriching our understanding of how such functions behave as the parameters of their approximating sums vary.

\begin{dfnnonum}[\( N \)-th Parameter Space]
For a given \( N \in \mathbb{N} \), we define \( \mathcal{Z}_N \) to be the parameter space consisting of functions of the form
\[
Z_N(t; \overline{a}) = Z_0(t) + \sum_{k=1}^{N} \frac{a_k}{\sqrt{k+1}} \cos (\theta(t) - \ln(k+1) t),
\]
where \( \overline{a} = (a_1, \ldots, a_N) \) belongs to \( \mathbb{R}^N \). 
\end{dfnnonum}

This new space \( \mathcal{Z}_N \) allows us to investigate how subtle changes in \( \overline{a} \) influence the behaviour of \( Z_N(t; \overline{a}) \), which is crucial for our later discussions. Our study focuses on the following concept of the global discriminant within the \( \mathcal{Z}_N \) space:
\begin{dfnnonum}[Global Discriminant]
We define the global discriminant hyper-surface \( \Sigma_N \) in \( \mathcal{Z}_N \) as:
\[
\Sigma_N = \left \{ \overline{a} \in \mathbb{R}^N \mid Z_N(t; \overline{a}) \textrm{ has a multiple zero} \right \},
\]
where by multiple zero we mean parameters for which the function \( Z_N(t; \overline{a}) \) and its first derivative both have a common zero.
\end{dfnnonum}

Our first theorem establishes a geometric connection between the RH and our discriminant study:

\begin{theorem} \label{thm:A}
Consider \( \gamma(r) \) as a parametrized curve in \( \mathcal{Z}_N \) for \( r \in [0, 1] \), originating from the core function \( Z_0(t) \) at \( r = 0 \). 
\begin{enumerate}
\item The zero set of \( Z_N(t; \gamma(r)) \) is self-conjugate for any \( r \in [0, 1] \). That is, if \( z \) is a zero of \( Z_N(t; \gamma(r)) \), then its complex conjugate \( \overline{z} \) is also a zero.
\item The real zeros \( t_n(\overline{r}) \) of \( Z_N(t; \gamma(r)) \) remain well-defined, smooth, and real as long as they do not collide with other consecutive zeros.
\end{enumerate}
\end{theorem}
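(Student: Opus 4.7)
The plan is to prove Part (1) by the Schwarz reflection principle after analytically continuing $Z_N(\cdot;\overline{a})$ in the $t$ variable, and then to use Part (1), together with the implicit function theorem, to obtain Part (2). The whole argument is essentially formal once the analytic continuation of $\theta$ is in place.

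For Part (1), the first step is to extend $\theta(t)$ to an analytic function on a neighbourhood of the positive real axis. The symmetric representation
\[
\theta(z)=-\frac{i}{2}\log\frac{\Gamma(1/4+iz/2)}{\Gamma(1/4-iz/2)} - \frac{z}{2}\log z
\]
agrees with the definition given in the introduction for real $z>0$, is analytic in a horizontal strip about $\mathbb{R}_{>0}$, and satisfies the reflection identity $\theta(\overline z)=\overline{\theta(z)}$. Consequently each building block $\cos(\theta(z)-\ln(k+1)z)$ inherits the same identity. Since $\gamma(r)\in\mathbb{R}^N$ and $Z_0(z)$ satisfies the reflection identity as well, one obtains
\[
\overline{Z_N(z;\gamma(r))}=Z_N(\overline z;\gamma(r)).
\]
Hence $Z_N(z_0;\gamma(r))=0$ forces $Z_N(\overline{z_0};\gamma(r))=0$, which is precisely the self-conjugacy claim.

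For Part (2), at $r=0$ we have $Z_N(t;\gamma(0))=Z_0(t)$, whose zeros $t_n^0$ are real and \emph{simple}: at such a zero $\theta(t_n^0)=\pi/2+m\pi$, so
\[
Z_0'(t_n^0)=-\theta'(t_n^0)\sin\theta(t_n^0),
\]
which is nonzero because $|\sin\theta(t_n^0)|=1$ and $\theta'(t)>0$ in the range of interest. Apply the implicit function theorem to $F(t,r):=Z_N(t;\gamma(r))$ at $(t_n^0,0)$ to obtain a smooth, \emph{a priori} complex function $r\mapsto t_n(\overline r)$ defined on a neighbourhood of any $r$ at which $\partial_t F$ does not vanish along the trajectory. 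Self-conjugacy (Part 1) ensures that $\overline{t_n(\overline r)}$ is also a zero of $F(\cdot, r)$ lying near $t_n(\overline r)$; the uniqueness clause of the implicit function theorem then forces $\overline{t_n(\overline r)}=t_n(\overline r)$, so the zero stays on the real axis. A standard continuation argument along the compact interval $[0,1]$ extends this local picture to all $r$, with the sole obstruction being a value of $r$ at which $\partial_t F$ vanishes, which, by the above and the continuous dependence of the zeros on $r$, occurs exactly when $t_n(\overline r)$ collides with a consecutive real zero.

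The main technical point, and essentially the only place that requires care, is to make the analytic continuation of $\theta$ precise in a strip wide enough to enclose every zero we want to track, and to verify that no branch-cut issue from the $\log z$ factor interferes with the reflection identity. Once that is established, both claims follow directly from the reflection principle and a standard application of the implicit function theorem; no quantitative estimates on $Z_N$ beyond the simplicity of the zeros of $Z_0$ are required.
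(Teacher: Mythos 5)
Your proof is correct and follows essentially the same route as the paper: Part~(1) rests on the conjugation identity $\overline{\theta(\overline{z})}=\theta(z)$ propagated through the cosine terms with real coefficients, which is exactly the paper's argument, and Part~(2) is carried out via the implicit function theorem together with Part~(1), which is the natural way to fill in the step the paper dispatches with ``the rest follows naturally.'' The extra care you take with the analytic continuation of $\theta$ and the uniqueness clause of the IFT is a welcome sharpening, not a departure.
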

\begin{proof}
Note that since $\overline{\theta(\overline{t})}=\theta(t)$, all the terms of $Z_N(t; \gamma(r))$ satisfy 
\[
\overline{\cos(\theta(\overline{t})-\ln(k+1)\overline{t})}=\cos(\theta(t)-\ln(k+1)t),
\]
from which the self-conjugacy of (1) results, the rest follows naturally. 
\end{proof} 

Following the proof of Theorem \ref{thm:A}, we infer that the zeros of the function can only exit the real line as pairs at points of collision, where multiple zeros appear. This notion allows us to reframe the Riemann Hypothesis (RH) in a novel way:
\begin{conj}[$A$-Philosophy dynamic RH] \label{con:1} 
For any $n \in \mathbb{N}$, there exists a path $\gamma(r)$ in $\mathcal{Z}_{\left [ \frac{t_n}{2} \right ]}$ with $\gamma(0)=Z_0(t)$ and $\gamma(1)=Z_N (t)$ which is non-colliding for the consecutive pair of zeros $t_n(r)$ and $t_{n+1}(r)$ for $r \in [0,1]$.
\end{conj}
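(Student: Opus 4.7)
The plan is to realize the conjecture as a statement about path-connectivity in the real parameter space $\mathbb{R}^N$, $N = [t_n/2]$, relative to the component of the global discriminant $\Sigma_N$ responsible for the collision of the specific pair $t_n, t_{n+1}$. By Theorem \ref{thm:A}(2), any path $\gamma:[0,1]\to \mathcal{Z}_N$ joining $\overline{0}$ to $\overline{1}$ that avoids this component automatically yields a smooth family of real zeros $t_n(r), t_{n+1}(r)$, so non-collision of the designated pair is exactly what must be secured. The desired statement therefore becomes purely topological: does $\overline{1}$ lie in the same path-component as $\overline{0}$ in $\mathbb{R}^N\setminus \Sigma_N$ with respect to this pair?

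The natural first candidate is the straight-line path $\gamma(r)=r\overline{1}$. The explicit Lambert $W$-function formulas recalled in Section \ref{s:2} show that all zeros of the core $Z_0(t)$ are simple and well-separated, so the implicit function theorem guarantees that $t_n(r)$ and $t_{n+1}(r)$ continue as smooth real functions of $r$ on some maximal interval $[0, r^*)$. The task thereby reduces to establishing $r^*>1$, i.e., ruling out a collision of this particular pair along the segment.

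If the straight line should hit $\Sigma_N$ at some $r^* \le 1$, a perturbative detour would be attempted. The relevant local branch of $\Sigma_N$ is cut out by the two real analytic equations $Z_N(t;\overline{a})=0$ and $\partial_t Z_N(t;\overline{a})=0$ with $t$ an auxiliary unknown, and hence projects onto a codimension-one real analytic subvariety of $\mathbb{R}^N$. For $N\ge 2$, such a subvariety does not locally obstruct path-connectivity: a transversality argument applied to a small perturbation of $\gamma$ inside a tubular neighborhood of the offending $r^*$ produces a revised path that skirts the collision while still beginning at $\overline{0}$ and ending at $\overline{1}$. Hence \emph{local} obstructions can always be removed.

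The essential difficulty, and the reason the statement is posed as a conjecture rather than a theorem, is \emph{global}: one must certify that $\overline{0}$ and $\overline{1}$ do not sit in distinct path-components of $\mathbb{R}^N\setminus \Sigma_N$ cut out by this branch, because if the pair $t_n, t_{n+1}$ went complex at $\overline{a}=\overline{1}$, the endpoint would sit in a forbidden component and no local detour could rescue it. This is precisely where the Riemann Hypothesis is encoded in the conjecture. The quantitative machinery developed later in the paper, namely the second-order expansion of $\Delta_n(\overline{a})$ from Theorem \ref{thm:B1}, the viscosity $\mu(g_n)$, and the G-B-G repulsion bound $|\mu(g_n)|>4$, suggest a direct attack: combining these into a uniform barrier estimate that forces $(-1)^n \Delta_n(r\overline{1})>0$ for every $r\in [0,1]$ would certify that the straight line itself is the desired $\gamma$. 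Producing such a uniform sign bound, which is effectively a pair-wise strengthening of Theorem \ref{mainthm:B} along the full segment, is the step I expect to be the main obstacle.
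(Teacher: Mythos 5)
This statement is a conjecture in the paper, not a theorem: the paper provides no proof of it, and Theorem~\ref{thm:B} shows the corrected Gram's law (a reformulation of this conjecture) is equivalent to the Riemann Hypothesis. You correctly recognized that it is a conjecture, and your framing of the problem as path-connectivity in $\mathbb{R}^N\setminus\Sigma_N$ together with the implicit-function-theorem continuation of $t_n(r)$, $t_{n+1}(r)$ from the simple, well-separated zeros of the core matches the paper's setup.

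However, your perturbation step contains a genuine error. You claim that a codimension-one real analytic subvariety of $\mathbb{R}^N$ ``does not locally obstruct path-connectivity'' when $N\ge 2$, so that a small transversality detour always avoids the collision. That is false: a real hypersurface separates $\mathbb{R}^N$ locally, and if the line $r\mapsto r\overline{1}$ meets the relevant branch of $\Sigma_N$ transversally at some $r^*$, then every nearby path must also cross it. The harmless codimension for such a detour argument would be two (as for the complexified discriminant over $\mathbb{C}$), not one. The paper bears this out: in Sections~\ref{s:11}--\ref{s:12} it exhibits a Gram point ($n=730119$) where the linear path genuinely enters the region $(-1)^n\Delta_n<0$, meaning the pair of zeros leaves the real line, travels in the complex plane, and re-emerges elsewhere; the remedy is not a small perturbation but a globally different two-stage ``shifting then descending'' curve, whose universal existence is itself only conjectured (Conjecture~\ref{con:2dim} and the energy bound Conjecture~\ref{con:energy}), resting in turn on the unproved G-B-G repulsion (Conjecture~\ref{con:8.1}). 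Your closing suggestion of a uniform barrier estimate built from Theorem~\ref{thm:B1} and the viscosity bound is in the same spirit as the paper's program, but — like the paper's own construction — it recasts rather than resolves the difficulty, which is in fact the RH itself.
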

This conjecture, if true, would imply a one-to-one, order-preserving correspondence between the zeros (or extremal points) of the core function \(Z_0(t)\) and those of \(Z(t)\). 
\section{The Corrected Gram's Law} \label{s:4}
The study of elements in \(\mathcal{Z}_N\) is particularly challenging due to their infinite number of zeros. This complexity is compounded when considering the geometry of the global discriminant \(\Sigma_N \subset \mathcal{Z}_N\). To simplify our discussion and gain more precise insights, we introduce the concept of a \textit{local discriminant} for a given pair of consecutive zeros \(t_n\) and \(t_{n+1}\) and a given \(n \in \mathbb{Z}\), with respect to a one-parametric family in \(\mathcal{Z}_N\). To offer a concrete example, Fig. \ref{fig:f2} illustrates the collision process between the $16$-th and $17$-th zeros of the $1$-parametric family
\[
Z_1(t; r) := \cos(\theta(t)) + \frac{r}{\sqrt{2}} \cos( \theta(t) - \ln(2) t) \in \mathcal{Z}_1,
\]
for \(t\) in the range \(66.5 \leq t \leq 70\) and for \(r=0\) (blue), \(r=0.75\) (orange), \(r=1.5\) (green), and \(r=2.25\) (red), showing  how the zeros evolve with different values of \(r\):

\begin{figure}[ht!]
	\centering
		\includegraphics[scale=0.4]{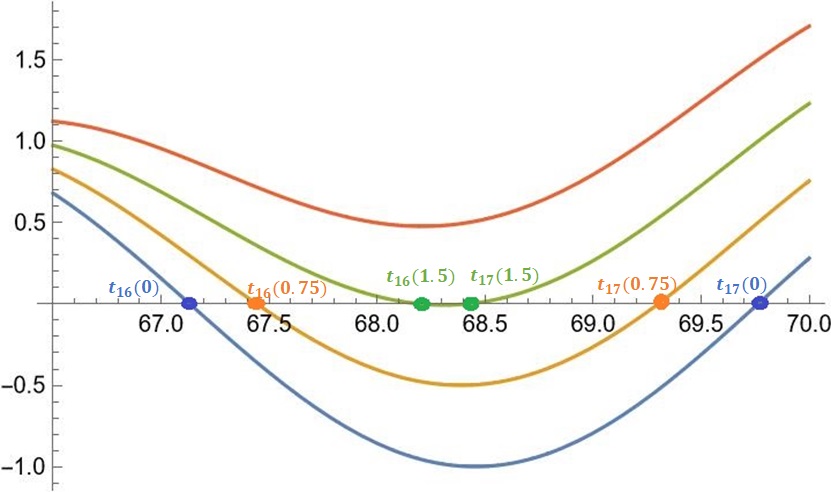}
	\caption{\small{$Z_1(t; a)$ in the range $66.5 \leq t \leq 70$ for $r=0$ (blue), $r=0.75$ (orange), $r=1.5$ (green) and $r=2.25$ (red).}}
\label{fig:f2}
	\end{figure} 
Note that a multiple zero occurs precisely when \( g_n(r) \) is itself a zero of \( Z_N(t; \gamma(r)) \). This phenomenon, evident in Fig. \ref{fig:f2}, is significant because it characterizes points where the function loses its simple zero-crossing behavior. We say that \(\gamma\) is \emph{non-degenerate} for the \(n\)-th Gram point if \( g_n(r; \gamma) \) is well-defined, real, and varies continuously for any \( r \in [0,1] \). 

\begin{dfnnonum}[The \( n \)-th Gram discriminant of a non-degenerate curve]
Let \( \gamma \) be a non-degenerate curve for the \( n \)-th pair of zeros. We refer to
\[
\Delta_n(r; \gamma) := Z_N(g_n(r; \gamma); \gamma(r))
\]
as the \emph{\( n \)-th Gram discriminant of \( \gamma \)}.
\end{dfnnonum}

We now arrive at a central point of our investigation. 
\begin{theorem}[The corrected Gram law] \label{thm:B}
The Riemann hypothesis holds if and only if for any \( n \in \mathbb{Z} \) there exists a non-degenerate curve \( \gamma_n \) with \( \gamma(1) = (1, \ldots, 1) \) such that
\begin{equation}
\label{eq:Corrected-Gram-1}
(-1)^{n} \Delta_n(1; \gamma) > 0.
\end{equation}
\end{theorem}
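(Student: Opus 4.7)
The plan is to hinge both directions on the sign and vanishing behaviour of $\Delta_n(r;\gamma)$ along a non-degenerate curve $\gamma$ based at the core function, $\gamma(0)=\overline{0}$. At $r=0$ one has $\Delta_n(0;\gamma)=Z_0(g_n)=\cos(\pi n)=(-1)^n$, so $(-1)^n\Delta_n(0;\gamma)=1>0$. The key general observation, recorded in the discussion following Figure \ref{fig:f2}, is that $\Delta_n(r;\gamma)=0$ precisely when $g_n(r;\gamma)$ coincides with a zero of $Z_N(t;\gamma(r))$, i.e.\ when the pair $t_n(r;\gamma), t_{n+1}(r;\gamma)$ collides into a multiple zero; by Theorem \ref{thm:A} a transversal such collision on $(0,1)$ forces the pair off the real axis afterwards, so $g_n(r;\gamma)$ cannot be continued as real, contradicting non-degeneracy.

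For the direction $(\Leftarrow)$, assume that for each $n\in\mathbb{Z}$ a non-degenerate $\gamma_n$ with $\gamma_n(1)=\overline{1}$ and $(-1)^n\Delta_n(1;\gamma_n)>0$ exists. Non-degeneracy forbids transversal crossings of $\Sigma_N$ along the $n$-th branch on $(0,1)$; any remaining tangential touchings do not change the sign of $\Delta_n$, which is consistent with the hypothesis $(-1)^n\Delta_n(1;\gamma_n)>0$ matching the initial value at $r=0$. By Theorem \ref{thm:A} the pair $(t_n(r;\gamma_n), t_{n+1}(r;\gamma_n))$ stays real on $[0,1]$, so at $r=1$ it yields real zeros of $Z_{N(n)}(t;\overline{1})$. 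Letting $n$ range over $\mathbb{Z}$ and invoking the approximate formula \eqref{eq:Z-function} together with Remark \ref{rem:2.1}, every consecutive pair of zeros of $Z(t)$ is real and the Riemann hypothesis follows.

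For the direction $(\Rightarrow)$, assume the Riemann hypothesis. Then at both $\overline{a}=\overline{0}$ and $\overline{a}=\overline{1}$ the $n$-th consecutive pair of zeros of $Z_N(t;\overline{a})$ is real and simple, so both endpoints lie off the relevant $n$-th branch $\Sigma_N^{(n)}\subset\Sigma_N$ of the discriminant locus. Since $\Sigma_N^{(n)}$ is a real-analytic hyper-surface of codimension one in $\mathbb{R}^N$, a transversality/genericity argument should yield, for each $n$, a smooth path $\gamma_n\colon[0,1]\to\mathcal{Z}_N$ from $\overline{0}$ to $\overline{1}$ avoiding $\Sigma_N^{(n)}$. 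Along $\gamma_n$ no collision of the $n$-th pair occurs, so $g_n(r;\gamma_n)$ extends smoothly and stays real, $\gamma_n$ is non-degenerate, and by the first paragraph $(-1)^n\Delta_n(1;\gamma_n)>0$. The analytic continuation of $g_n(\overline{a})$ from $\overline{0}$ to $\overline{1}$ realized along this path is precisely the second assertion of Theorem \ref{mainthm:B}.

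The main obstacle is the construction step in $(\Rightarrow)$: although $\Sigma_N^{(n)}$ has real codimension one, it may disconnect $\mathbb{R}^N$, and RH only guarantees that $\overline{0}$ and $\overline{1}$ lie \emph{off} $\Sigma_N^{(n)}$, not that they lie in the same connected component of $\mathbb{R}^N\setminus\Sigma_N^{(n)}$. Rigorously producing a non-crossing path thus requires either a careful structural description of the components of $\Sigma_N^{(n)}$ under RH, or a direct analytic-continuation argument for $g_n(\overline{a})$ that exploits the joint real-analyticity of $Z_N(t;\overline{a})$ together with the realness of the $n$-th pair at the target endpoint; I would expect the proof to weave both ingredients together.
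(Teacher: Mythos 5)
Your proposal follows essentially the same route as the paper's very terse proof: compute $\Delta_n(0;\gamma)=Z_N(g_n(0);\overline{0})=\cos(\theta(g_n))=(-1)^n$, identify vanishing of $\Delta_n$ with a collision of the pair $t_n,t_{n+1}$ (hence with crossing of the discriminant locus), and use non-degeneracy to force the sign of $\Delta_n(r;\gamma)$ to be preserved along $[0,1]$. The paper's entire justification is one further sentence asserting that non-collision is \emph{tantamount} to sign-invariance of $\Delta_n(r;\gamma)$; it does not carry out the $(\Rightarrow)$ construction any more explicitly than you do.

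The gap you flag in the $(\Rightarrow)$ direction is therefore a genuine one, and it is not filled by the paper's own argument. Knowing (from RH) that both $\overline{0}$ and $\overline{1}$ lie off $\Sigma_N^{(n)}$ does not by itself produce a path between them inside a single component of $\mathbb{R}^N\setminus\Sigma_N^{(n)}$; the paper simply asserts the equivalence without exhibiting such a $\gamma_n$, and indeed the unconditional existence of a non-colliding path is stated separately as Conjecture \ref{con:1} rather than proved. A second gap shared by your write-up and the paper's, worth naming explicitly, is the passage from reality of zeros of the truncated sum $Z_{N(n)}(t;\overline{1})$ to reality of zeros of $Z(t)$ itself: these differ by the $O(1/t)$ error in \eqref{eq:Z-function}, so some stability argument (beyond the informal appeal in Remarks \ref{rem:2.1} and \ref{rem:crucial}) is needed to transfer the conclusion. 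In short, your reconstruction is faithful to the paper's intent, and the honest obstruction you isolate is precisely where the paper's proof is incomplete.
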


\begin{proof}
By definition, for any curve \( \gamma \), starting at the origin \( \gamma(0) = (0, \ldots, 0) \), we have
\[
\Delta_n(0; \gamma) = Z_N(g_n(0); 0) = \cos(\theta(g_n)) = (-1)^{n}.
\]
Our proof demonstrates that the non-collision of \( t_n(r) \) and \( t_{n+1}(r) \) with respect to \( \gamma \) is tantamount to requiring that the discriminant \( \Delta_n(r; \gamma) \) remains invariant in sign.
\end{proof}

 Note that, contrary to the classical Gram law \eqref{eq:gram}, which is an empirical observation regarding the numerical tendency of Gram points, the corrected Gram's law \eqref{eq:Corrected-Gram-1} is expected to hold for all Gram points and is equivalent to the RH.
 At this point enters the second fundamental feature of the $A$-philosophy (aside from the existence of discriminants), which is the ability to study smooth variations through derivatives. In the next sections we will describe results regarding the geometrical content of the first and second derivatives.  
 
\section{The Discriminant of the Linear Curve - First Examples} \label{s:5} 
In this section, we examine the \(1\)-parametric family of functions defined by
\[
Z_N(t;r):= Z_0(t) + r \cdot \sum_{k=1}^{N} \frac{1}{\sqrt{k+1}} \cos (\theta (t) - \ln(k+1) t) \in \mathcal{Z}_N,
\]
for \(r \in [0,1]\). Note that $Z_N(t;r)$ is the curve in the parameter $A$-space, interpolating between the core function \(Z_0(t)\) and \(Z_N(t;\overline{1})\) by gradually adding all the terms together in a proportional manner. Further motivation for considering $Z_N(t; r)$ will be detailed in Section \ref{s:11}, where we discuss Edwards' speculation. To illustrate how $Z_N(t;r)$ behaves with respect to good$\setminus$bad Gram points, let us examine the following example:
\begin{ex}[$\Delta_n(r)$ for $g_{90}$ and $g_{126}$] \label{ex:1} Figure \ref{fig:f3} shows $\Delta_n (r)$ (blue) and its first order approximation $Z_N (g_n ; r)$ (orange) for the good Gram point $n=90$ (left) and the bad Gram point $n=126$ (right), with $0 \leq r \leq 1$:

\begin{figure}[ht!]
	\centering
		\includegraphics[scale=0.4]{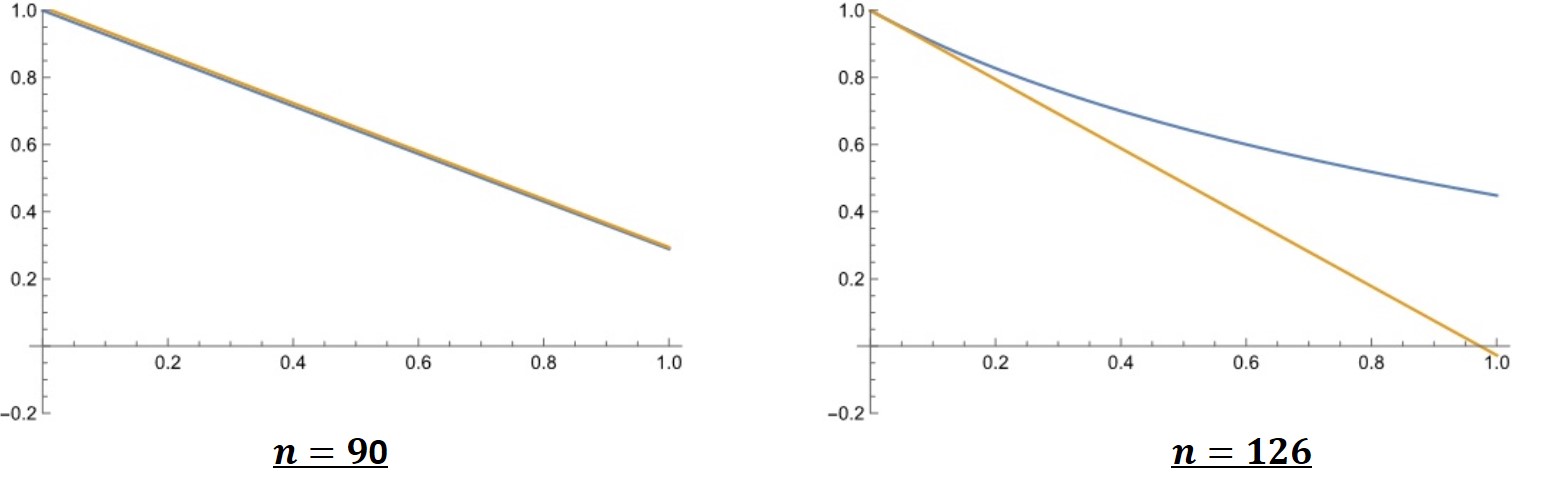} 	
		\caption{\small{Graph of $\Delta_n (r)$ (blue) and $Z_N (g_n ; r)$ (orange) for $n=90$ (left) and $n=126$ (right) with $0 \leq r \leq 1$.}}
\label{fig:f3}
	\end{figure} 
	
For the good Gram point $g_{90}$ the first-order approximation $Z_N (g_n ;r)$ serves as a rather accurate approximation of $\Delta_n (r)$ itself. For the bad Gram point $g_{126}$ the first-order approximation $Z_N (g_n ; r)$ is seen to deviate from $\Delta_n (r)$. However, for both $g_{90}$ and $g_{126}$ the linear family $Z_N(t;r)$ is non-colliding and establishes the corrected Gram's law. Consider Fig. \ref{fig:f4} which shows the graphs of $Z_N(t;r)$ themselves in the range $t \in [g_n -2,g_n+2]$ for $n=90$ (left) and $n=126$ (right) and various values of $r \in [0,1]$.
\begin{figure}[ht!]
	\centering
		\includegraphics[scale=0.35]{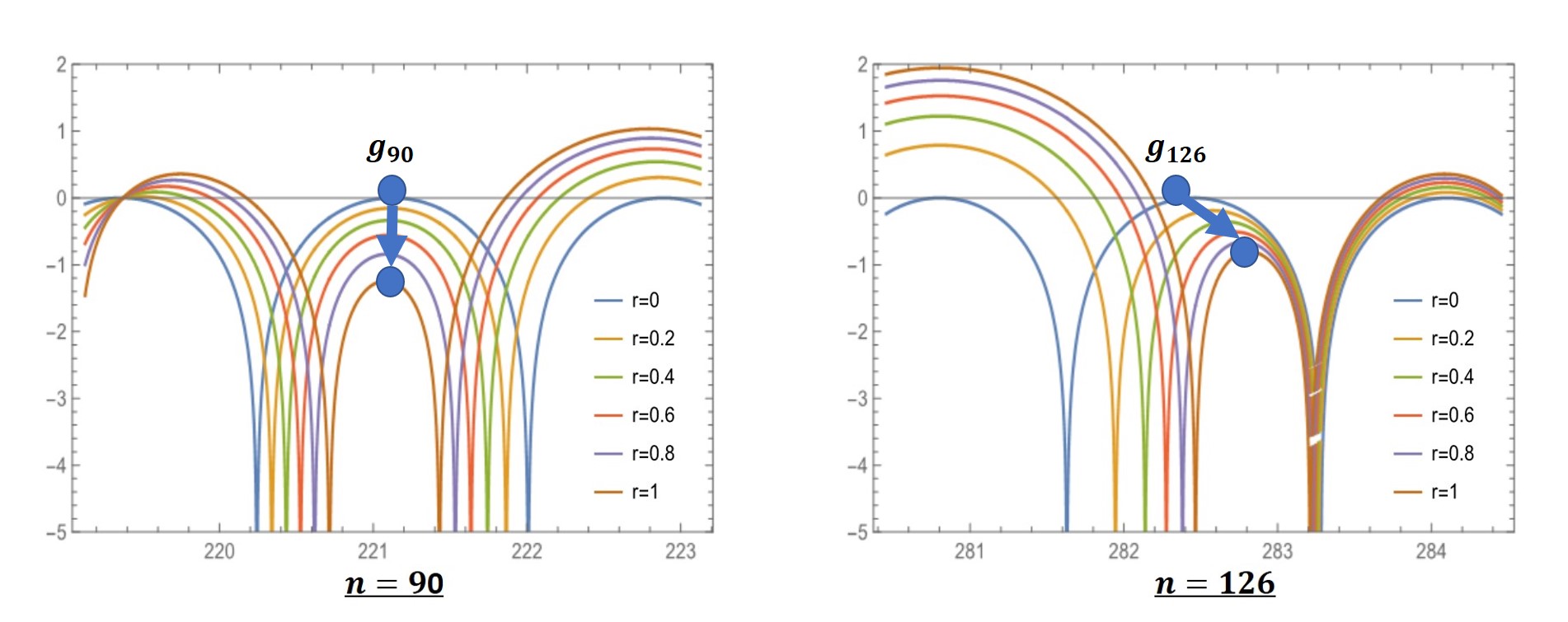} 	
		\caption{\small{Graphs of $Z_N(t;r)$ in the range $t \in [g_n -2,g_n+2]$ for $n=90$ (left) and $n=126$ (right) and various values of $r \in [0,1]$.}}
\label{fig:f4}
	\end{figure}

\end{ex}

 Our aim in the subsequent sections is to give the theoretical explanation for the phenomena observed in the above examples.
 In Section \ref{s:6} we show that $Z_N(g_n ; r)$ is the first order-approximation of $\Delta_n(r)$, explaining the close proximity between 
 the two, observed for the good Gram point $n=91$. In Section \ref{s:7} we compute the second-order approximation, explaining the shift in $g_n(r)$ to the right, observed for the bad Gram point $n=126$. 
 
 \section{The First-Order Approximation of the Corrected Law is the Classical Law} \label{s:6}
Let us consider the first-order approximation of $\Delta_n (\overline{r})$ given by 
\[
\Delta_n(r) = \Delta_{n}(\overline{0})+ \nabla \Delta_{n}  (\overline{0}) \cdot \overline{r}+O(r^2),
\]
where the gradient vector is given by 
\[
\nabla  \Delta_{n} (\overline{0}) := \left (  \frac{\partial \Delta_{n}}{\partial a_1} (\overline{0}) ,...,\frac{\partial \Delta_{n}}{\partial a_N} (\overline{0}) \right ).
\] 
We have: 
 
\begin{theorem}[First order approximation is Gram's law] \label{thm:first-order} For any $n \in \mathbb{Z}$, the first-order approximation of the discriminant $\Delta_n(r)$ of the linear curve is given by  
\[ 
\Delta_{n}(r) = Z(g_n ; r) +O(r^2).
\] 
In particular, the classical Gram's law is the first-order approximation of the corrected Gram's law for the linear curve. 
\end{theorem}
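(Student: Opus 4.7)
The plan is to exploit the defining property of $g_n(r)$ as an extremal point of $Z_N(t;r)$, which causes the $r$-derivative of $\Delta_n$ at $r=0$ to reduce (by the envelope principle) to the $r$-derivative of $Z_N(g_n;r)$ at the fixed Gram point $g_n$. Since the two quantities also agree at $r=0$, their Taylor expansions will coincide through first order.

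First I would verify smoothness of $g_n(r)$ near $r=0$ by the implicit function theorem. The extended Gram point satisfies $\partial_t Z_N(g_n(r);r)=0$, and at $r=0$ this reduces to $Z_0'(g_n)=0$. Since $g_n$ is a strict extremum of $\cos(\theta(t))$, we have $Z_0''(g_n)\neq 0$, so the implicit function theorem produces a smooth $r \mapsto g_n(r)$ with $g_n(0)=g_n$ on some neighbourhood of the origin. Next I would record the base-point identity
\[
\Delta_n(0) \;=\; Z_N(g_n(0);0) \;=\; Z_0(g_n) \;=\; \cos(\theta(g_n)) \;=\; (-1)^n \;=\; Z_N(g_n;0),
\]
so the two candidate approximations agree at $r=0$.

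Next I would differentiate $\Delta_n(r) = Z_N(g_n(r);r)$ by the chain rule to obtain
\[
\Delta_n'(r) \;=\; \partial_t Z_N(g_n(r);r)\cdot g_n'(r) \;+\; \partial_r Z_N(g_n(r);r).
\]
The first summand vanishes identically because $g_n(r)$ is, by definition, a critical point of $Z_N(\cdot;r)$. Evaluating at $r=0$ and using $g_n(0)=g_n$ gives $\Delta_n'(0) = \partial_r Z_N(g_n;0)$. On the other hand, since the linear curve is affine in $r$ the map $r\mapsto Z_N(g_n;r)$ satisfies $\frac{d}{dr}Z_N(g_n;r)\big|_{r=0} = \partial_r Z_N(g_n;0)$ as well. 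Thus the Taylor expansions of $\Delta_n(r)$ and $Z_N(g_n;r)$ coincide up to first order, yielding $\Delta_n(r)=Z_N(g_n;r)+O(r^2)$.

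The only technical subtlety — and the step to state carefully — is the envelope-type cancellation of the $g_n'(r)$ contribution; this hinges on the non-degeneracy $Z_0''(g_n)\neq 0$, which legitimizes the definition of $g_n(r)$ in the first place. Once that is in hand the rest is a chain-rule computation. Finally I would note the interpretation advertised in the theorem: the leading behaviour of the corrected discriminant is exactly the classical quantity $Z_N(g_n;r)$ evaluated at the \emph{fixed} Gram point, so its sign at $r=1$ is the classical Gram test, which is precisely the first-order content of $(-1)^n\Delta_n(1)>0$.
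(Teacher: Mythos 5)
Your proof is correct and uses essentially the same mechanism as the paper: the envelope-type cancellation, where the $\partial_t Z_N$ factor kills the $g_n'(r)$ contribution in the chain rule because $g_n(r)$ is by definition a critical point of $Z_N(\cdot;r)$. The paper phrases this coordinate-by-coordinate, computing each partial $\frac{\partial \Delta_n}{\partial a_k}(\overline{0})$ via the one-variable functions $F_k(t;a)$ and then assembling the gradient before restricting to the diagonal direction $\overline{r}=(r,\dots,r)$; you differentiate directly along the one-parameter curve $r\mapsto\Delta_n(r)$ in a single step. These are the same calculation (multivariate chain rule summed over $k$ versus directional derivative along the curve), so the content is identical. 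One small thing your write-up adds that the paper leaves implicit is the explicit appeal to the implicit function theorem and the non-degeneracy $Z_0''(g_n)\neq 0$ to guarantee that $g_n(r)$ is well-defined and smooth near $r=0$ — a worthwhile clarification, since the whole argument rests on that smoothness.
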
 

\begin{proof} 
Consider the function 
\[
F_k(t;a):= \cos(\theta(t)) + \frac{a}{\sqrt{k+1}} \cos(\theta(t)- \ln(k+1)t)
\] 
and set 
\[ 
G_k(t;a):= \frac{\partial}{\partial t} F_k(t; a). 
\]
Denote by $g_n(a)$ the extremal point of $F_k(t;a)$ locally extending the gram point $g_n$. Then, by definition, the discriminant can be written as 
\[ 
\Delta_n( 0,...,0,a,0,...,0) = F_k(g_n(a);a). 
\]
Hence, by the chain rule, we have 
\[ 
\frac{\partial \Delta_{n}}{\partial a_k} (\overline{0})=\frac{\partial}{\partial a} F_k(g_n(a) ; a)(0)= \frac{\partial g_n}{\partial a}(0) \cdot G_k(g_n ; 0)+ \frac{\partial}{\partial a} F_k (g_n ; 0). 
\]
But since the Gram points are exactly the solutions of $G_k(g_n;0)=0$, we get  
\[ 
\frac{\partial \Delta_{n}}{\partial a_k} (\overline{0})=\frac{\partial}{\partial a} F_k (g_n ; 0)=\frac{1}{\sqrt{k+1} } \cos ( \theta (g_n) - \ln(k+1) g_n).
\]
The first-order approximation of the discriminant $\Delta_n(r)$ is given by   
\begin{multline} 
\Delta_{n}(r) \approx \Delta_{n}(\overline{0})+ \nabla \Delta_{n}  (\overline{0})  \cdot \overline{r}= \\ = 
\Delta_{n}(\overline{0})+ r\cdot \sum_{k=1}^N \frac{\partial \Delta_{n}}{\partial a_k} (\overline{0})= \\ = \cos(\theta(g_n)) +\sum_{k=0}^N \frac{r}{\sqrt{k+1} } \cos ( \theta (g_n) - \ln(k+1) g_n) = Z(g_n ; r).
\end{multline} 
\end{proof} 

\begin{rem}[The reason for Gram's law] 
Theorem \ref{thm:first-order} can be seen as giving a theoretical explanation to the empirical phenomena of the classical Gram law, as following from the RH. Indeed, for good Gram points the first-order approximation $Z(g_n)$ is close to the value of $\Delta_n(1)$, and hence is expected to satisfy $(-1)^n Z(g_n)>0$.
 \end{rem}

 \section{The Second-Order Approximation of $\Delta_n(r)$} \label{s:7}
We can now consider the second-order approximation of $\Delta_n (\overline{r})$, which in view of Theorem \ref{thm:first-order} can be written as 
\[ 
\Delta_n(r) = Z(g_n ; r) +\frac{1}{2} H_n(\overline{0}) \cdot r^2+O(r^3),
\]
where 
\[
H_n(\overline{0}):=\sum_{k_1,k_2=1}^N \frac{\partial^2 \Delta_{n}}{\partial a_{k_1} \partial a_{k_2}} (\overline{0})
\]
is the Hessian of second derivatives.
The main result of this section shows the content of the second-order Hessian:
\begin{theorem}[Second-order approximation]  \label{thm:CM} For any $n \in \mathbb{Z}$, the second-order Hessian is given by 
\[ \label{eq:Hess} 
H_n:=2(-1)^n \left (  \frac{Z'(g_n)}{\ln \left ( \frac{g_n }{2 \pi } \right )}  \right )^2.
\]
\end{theorem}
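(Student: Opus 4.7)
The plan is to compute
\[ H_n(0) = \sum_{k_1, k_2 = 1}^N \frac{\partial^2 \Delta_n}{\partial a_{k_1} \partial a_{k_2}}(\overline{0}) \]
directly from the composite definition $\Delta_n(\overline{a}) = F(g_n(\overline{a}); \overline{a})$, where $F(t; \overline{a}) = Z_N(t; \overline{a})$ and $g_n(\overline{a})$ is determined implicitly by the extremum equation $F_t(g_n(\overline{a}); \overline{a}) = 0$. Two structural facts will drive the computation: (i) the extremum identity $F_t(g_n; \overline{a}) = 0$ kills one chain-rule term in the first parameter derivative; and (ii) $F$ is linear in each coordinate $a_k$, so the pure second partial $F_{a_{k_1} a_{k_2}} \equiv 0$.

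Building on Theorem \ref{thm:first-order}, fact (i) reduces the first derivative to $\partial_{a_{k_1}} \Delta_n(\overline{a}) = F_{a_{k_1}}(g_n(\overline{a}); \overline{a})$. Differentiating once more in $a_{k_2}$ and invoking (ii) yields
\[ \frac{\partial^2 \Delta_n}{\partial a_{k_2} \partial a_{k_1}}(\overline{a}) = F_{t a_{k_1}}(g_n(\overline{a}); \overline{a}) \cdot \frac{\partial g_n}{\partial a_{k_2}}(\overline{a}). \]
I would then apply the implicit function theorem to $F_t(g_n(\overline{a}); \overline{a}) = 0$ to express $\partial g_n/\partial a_{k_2} = -F_{t a_{k_2}}/F_{tt}$, obtaining the manifestly symmetric form
\[ \frac{\partial^2 \Delta_n}{\partial a_{k_1} \partial a_{k_2}}(\overline{0}) = -\frac{F_{t a_{k_1}}(g_n; 0)\, F_{t a_{k_2}}(g_n; 0)}{Z_0''(g_n)}. \]
The crucial observation is that this matrix is rank one in $(k_1, k_2)$; summing therefore factorises the Hessian as a squared linear combination:
\[ H_n(0) = -\frac{1}{Z_0''(g_n)} \Bigl( \sum_{k=1}^N F_{t a_k}(g_n; 0) \Bigr)^2. \]

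The inner sum telescopes from the explicit shape of $Z_N$:
\[ \sum_{k=1}^N F_{t a_k}(g_n; 0) = \frac{d}{dt}\bigl[Z_N(t) - Z_0(t)\bigr]_{t=g_n} = Z'(g_n), \]
using $Z_0'(g_n) = 0$ (the defining property of the Gram point) and identifying $Z_N'(g_n)$ with $Z'(g_n)$ to leading order via the approximate formula \eqref{eq:Z-function}. For the denominator, I would expand $Z_0''(g_n) = -\cos(\theta(g_n))\theta'(g_n)^2 - \sin(\theta(g_n))\theta''(g_n)$; the sine term vanishes because $\theta(g_n) = \pi n$, and $\cos(\theta(g_n)) = (-1)^n$, leaving $Z_0''(g_n) = -(-1)^n \theta'(g_n)^2$. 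Substituting the Riemann--Siegel asymptotic $\theta'(g_n) \sim \tfrac{1}{2}\ln(g_n/2\pi)$ then produces the claimed closed form.

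I expect the conceptual core --- the chain-rule/implicit-function manipulation combined with the rank-one structure collapsing the double sum into a squared linear expression --- to be routine, essentially a version of the standard ``derivative of a critical value'' formula. The main obstacle will be the careful bookkeeping of the $(-1)^n$ factor coming out of $\theta'(g_n)^2$ and the constant produced by the $\tfrac{1}{2}$ in the asymptotic of $\theta'$, together with verifying that the substitutions $Z_N'(g_n) \to Z'(g_n)$ and $\theta'(g_n) \to \tfrac{1}{2}\ln(g_n/2\pi)$ are compatible with the overall $O(r^3)$ error budget declared in Theorem \ref{thm:B1}.
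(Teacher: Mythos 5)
Your approach matches the paper's. The paper's Lemma \ref{Lem:7.2} establishes precisely the implicit derivative $\partial g_n/\partial a_k = -F_{t a_k}/F_{tt}$ (phrased via a Newton iterate for an $\epsilon$-perturbation rather than by name), Proposition \ref{prop:7.2} derives the rank-one cross-derivative $\partial^2_{a_{k_1} a_{k_2}}\Delta_n(\overline{0}) = -F_{t a_{k_1}}F_{t a_{k_2}}/Z''$ by a two-parameter perturbation, and the proof of Theorem \ref{thm:CM} then collapses the double sum into a square exactly as you describe, identifying the inner linear combination with $Z'(g_n)$ via Theorem \ref{thm:Grad}. Your direct invocation of the implicit function theorem together with the observation $F_{a_{k_1} a_{k_2}} \equiv 0$ is a cleaner packaging of the same computation, not a different argument.

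One caveat on the constant, which you correctly flagged as the delicate point: carrying your outline through gives
\[
H_n(0) = -\frac{Z'(g_n)^2}{Z_0''(g_n)}, \qquad Z_0''(g_n) = -(-1)^n\theta'(g_n)^2 = -(-1)^n\,\frac{1}{4}\ln^2\!\left(\frac{g_n}{2\pi}\right),
\]
so that $H_n(0) = 4(-1)^n\left(Z'(g_n)/\ln(g_n/2\pi)\right)^2$, with leading coefficient $4$ rather than the $2$ printed in the theorem. The paper's own proof of Theorem \ref{thm:CM} also terminates with the coefficient $4$, so the stated constant in the theorem (and in Theorem \ref{thm:B1}) is an internal typo; your derivation is consistent with the paper's actual computation.
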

In order to prove the main theorem let us first prove a few preliminary results: 
\begin{lemma} 
	\label{Lem:7.2}
	\[ 
\frac{\partial}{\partial a_k} g_n(\overline{a})=  \frac{\sin ( \theta (g_n(\overline{a})) - \ln(k+1) g_n(\overline{a}))}{2 \sqrt{k+1}  Z''(g_n(\overline{a}) ; \overline{a}) } \ln \left (\frac{g_n(\overline{a})}{2 \pi (k+1)^2} \right ).
\]
	In particular, 
	\[
\frac{\partial}{\partial a_k} g_n(\overline{0})=  2 (-1)^{n+1} \frac{\sin ( \theta (g_n) - \ln(k+1) g_n)}{ \sqrt{k+1}  \ln^2 \left ( \frac{g_n}{2 \pi} \right ) } \ln \left (\frac{g_n}{2 \pi (k+1)^2} \right ). 
\]
	\end{lemma}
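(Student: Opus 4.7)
The plan is to apply the implicit function theorem to the defining equation of the extended Gram point. By construction, $g_n(\overline{a})$ is characterized as the extremal point of $Z_N(t;\overline{a})$ near $g_n$, i.e.\ the unique continuous solution of
\[
\Phi(t,\overline{a}) := \frac{\partial Z_N}{\partial t}(t;\overline{a}) = 0, \qquad \Phi(g_n,\overline{0})=0.
\]
Differentiating $\Phi(g_n(\overline{a}),\overline{a})\equiv 0$ with respect to $a_k$ and solving yields the standard implicit-differentiation formula
\[
\frac{\partial g_n}{\partial a_k}(\overline{a}) \;=\; -\,\frac{\partial_{a_k}\partial_t Z_N(g_n(\overline{a});\overline{a})}{\partial_t^2 Z_N(g_n(\overline{a});\overline{a})} \;=\; -\,\frac{\partial_{a_k}\partial_t Z_N(g_n(\overline{a});\overline{a})}{Z''(g_n(\overline{a});\overline{a})}.
\]
This reduces the lemma to a direct computation of the mixed partial in the numerator.

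Next, I would differentiate the explicit expansion of $Z_N(t;\overline{a})$ term by term. Only the $k$-th cosine contributes to $\partial_{a_k}$, so
\[
\partial_{a_k}\partial_t Z_N(t;\overline{a}) = -\frac{1}{\sqrt{k+1}}\sin(\theta(t)-\ln(k+1)t)\,\bigl(\theta'(t)-\ln(k+1)\bigr).
\]
Using the Riemann-Siegel identity $\theta'(t)=\tfrac12\log(t/(2\pi))$ (to the order of approximation already underlying \eqref{eq:Z-function}) and the algebraic rewriting
\[
\tfrac12\log\!\tfrac{t}{2\pi}-\ln(k+1)=\tfrac12\log\!\tfrac{t}{2\pi(k+1)^2},
\]
the mixed partial at $t=g_n(\overline{a})$ becomes
\[
\partial_{a_k}\partial_t Z_N(g_n(\overline{a});\overline{a}) = -\,\frac{\sin(\theta(g_n(\overline{a}))-\ln(k+1)g_n(\overline{a}))}{2\sqrt{k+1}}\,\log\!\left(\frac{g_n(\overline{a})}{2\pi(k+1)^2}\right).
\]
Substituting into the implicit-differentiation formula above (the two minus signs cancel) yields the first displayed identity of the lemma.

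For the specialization at $\overline{a}=\overline{0}$, I would compute $Z''(g_n;\overline{0})$ directly from $Z_N(t;\overline{0})=\cos\theta(t)$:
\[
Z''(t;\overline{0}) = -\sin\theta(t)\,\theta''(t)-\cos\theta(t)\,\theta'(t)^{2}.
\]
At the Gram point $\theta(g_n)=\pi n$ gives $\sin\theta(g_n)=0$ and $\cos\theta(g_n)=(-1)^{n}$, so
\[
Z''(g_n;\overline{0}) = -(-1)^{n}\,\theta'(g_n)^{2} = (-1)^{n+1}\cdot\tfrac14\log^{2}\!\left(\tfrac{g_n}{2\pi}\right).
\]
Plugging this into the general formula and simplifying the constant $1/\bigl(2\cdot\tfrac14\bigr)=2$ produces exactly
\[
\frac{\partial g_n}{\partial a_k}(\overline{0}) = 2(-1)^{n+1}\,\frac{\sin(\theta(g_n)-\ln(k+1)g_n)}{\sqrt{k+1}\,\log^{2}(g_n/(2\pi))}\,\log\!\left(\frac{g_n}{2\pi(k+1)^2}\right),
\]
as claimed. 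The only mildly delicate point is ensuring that implicit differentiation is justified, which requires $Z''(g_n(\overline{a});\overline{a})\neq 0$; this is precisely the non-degeneracy assumption on the curve (the discriminant condition \emph{fails} exactly when this Hessian vanishes), and at $\overline{a}=\overline{0}$ the explicit evaluation $(-1)^{n+1}\tfrac14\log^{2}(g_n/(2\pi))\neq 0$ verifies it in a neighbourhood of the origin. Everything else is bookkeeping with the log identity and the vanishing of $\sin\theta(g_n)$.
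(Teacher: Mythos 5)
Your proof is correct, and it is essentially the same computation as the paper's, just framed more cleanly. The paper derives the formula by perturbing $Z_N$ by $\epsilon$ in the $a_k$-direction, taking one Newton iteration
\[
\widetilde{g}_n(\overline{a};\epsilon)=g_n(\overline{a})-\frac{G_{k,\epsilon}(g_n(\overline{a});\overline{a})}{G'_{k,\epsilon}(g_n(\overline{a});\overline{a})},
\]
and reading off the $O(\epsilon)$ coefficient; your implicit-function-theorem derivation
\[
\frac{\partial g_n}{\partial a_k}=-\frac{\partial_{a_k}\partial_t Z_N}{\partial_t^2 Z_N}
\]
is exactly the infinitesimal content of that Newton step, so the numerator and denominator you compute ($-\tfrac{1}{2\sqrt{k+1}}\sin(\theta-\ln(k+1)t)\log\tfrac{t}{2\pi(k+1)^2}$ and $Z''$) are the same quantities the paper calls $G_{k,\epsilon}(g_n)/\epsilon$ and $G'_{k,\epsilon}(g_n)$. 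Your version is arguably preferable: it makes the non-degeneracy hypothesis $Z''(g_n(\overline{a});\overline{a})\ne 0$ explicit and it completes the final substitution (including the constant $1/(2\cdot\tfrac14)=2$ and the sign from $(-1)^{n+1}$) which the paper leaves as ``as required.''
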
 

\begin{proof} 

Let $g_n(\overline{a}; \epsilon)$ 
be the $n$-th extremal point of 
\[
F_{k,\epsilon}(t;\overline{a}):= Z_N (t ; \overline{a}) + \frac{\epsilon}{\sqrt{k+1}} \cos(\theta(t)- \ln(k+1)t).
\] 
for $0<\epsilon$ small enough. That is, the zero of the equation 
\[
G_{k,\epsilon}(t;\overline{a}):= \frac{\partial}{\partial t} F_{k,\epsilon}(t; a)=0. 
\] 
Then according to Newton's method, one can take the following first iteration
\[
\widetilde{g}_n(\overline{a};\epsilon) := g_n (\overline{a}) - \frac{G_{k,\epsilon}(g_n(\overline{a}) ; \overline{a})}{G_{k,\epsilon}'(g_n(\overline{a}) ;\overline{a})}, 
\] 
as an approximation of $g_n(\overline{a};\epsilon)$, which improves as $\epsilon$ decreases, see \cite{SM}. Note that 
\[
G_{k,\epsilon}(t;\overline{a})=Z'_N (t ; \overline{a}) - \frac{\epsilon }{\sqrt{k+1}} \sin(\theta(t)- \ln(k+1)t)(\theta'(t)-\ln(k+1)). 
\]
Since $Z'_N (g_n (\overline{a}) ; \overline{a})=0$ and 
\[
\theta'(t) = \left ( \frac{t}{2} \ln \left ( \frac{t}{2 \pi} \right ) - \frac{t}{2} -\frac{\pi}{8} \right) ' = \frac{1}{2} \ln \left ( \frac{t}{2 \pi} \right ),
\]
we have 
\[
G_{k,\epsilon}(g_n(\overline{a}) ;\overline{a})= - \frac{\epsilon}{2\sqrt{k+1}} \sin(\theta(g_n (\overline{a}) )- \ln(k+1)g_n (\overline{a}) )\cdot  \ln \left ( \frac{g_n( \overline{a})}{2 \pi (k+1)^2} \right ). 
\]
For the derivative the main term is given by 
\[
G'_{k,\epsilon}(g_n ( \overline{a}) ;\overline{a})= Z''_N(g_n(\overline{a})  ; \overline{a})+ O(\epsilon).
\] 
In particular, we have 
\[
Z''_N(g_n;\overline{0})= -\cos(\theta(g_n)) (\theta'(g_n) )^2 =\frac{(-1)^{n+1}}{4} \ln^2 \left ( \frac{g_n}{2 \pi} \right ),
\]
as required.
\end{proof} 

We have: 

\begin{prop} \label{prop:7.2} For any $1 \leq k_1,k_2 \leq N$ the following holds: 
\[
\frac{\partial^2 \Delta_{n}}{\partial a_{k_1} \partial a_{k_2}} (\overline{a}) =-\frac{1}{4 Z''(g_n(\overline{a}) ; \overline{a}) } \prod_{i=1}^2 
\frac{\sin(\theta(g_n(\overline{a}))- \ln(k_i+1) g_n (\overline{a})) \cdot \ln \left ( \frac{g_n (\overline{a})}{2 \pi (k_i+1)^2 } \right )}{\sqrt{k_i+1} } .
\]
In particular,
\[
\frac{\partial^2 \Delta_{n}}{\partial a_{k_1} \partial a_{k_2}} (\overline{0}) =\frac{(-1)^n}{\ln^2 \left ( \frac{g_n }{2 \pi } \right )} \prod_{i=1}^2 
\frac{\sin(\ln(k_i+1) g_n) \cdot \ln \left ( \frac{g_n }{2 \pi (k_i+1)^2 } \right )}{\sqrt{(k_i+1)} } .
\] 
\end{prop}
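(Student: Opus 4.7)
\smallskip

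The plan is to apply the chain rule twice to $\Delta_n(\overline{a})=Z_N(g_n(\overline{a});\overline{a})$, taking advantage of the fact that $g_n(\overline{a})$ is an extremal point of $Z_N(\cdot ; \overline{a})$ to kill one of the terms at the first-derivative stage, and then to substitute the formula for $\partial g_n/\partial a_{k_1}$ supplied by Lemma \ref{Lem:7.2} at the second-derivative stage.

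\smallskip

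First, by the chain rule,
\[
\frac{\partial \Delta_n}{\partial a_{k_2}}(\overline{a})
=\frac{\partial Z_N}{\partial t}\bigl(g_n(\overline{a});\overline{a}\bigr)\cdot\frac{\partial g_n}{\partial a_{k_2}}(\overline{a})
+\frac{\partial Z_N}{\partial a_{k_2}}\bigl(g_n(\overline{a});\overline{a}\bigr).
\]
Since $g_n(\overline{a})$ is by definition a zero of $\partial Z_N/\partial t$, the first term vanishes, and what remains is
\[
\frac{\partial \Delta_n}{\partial a_{k_2}}(\overline{a})
=\frac{1}{\sqrt{k_2+1}}\cos\bigl(\theta(g_n(\overline{a}))-\ln(k_2+1)g_n(\overline{a})\bigr).
\]
This is exactly the computation carried out in the proof of Theorem \ref{thm:first-order}, but now performed at a general $\overline{a}$ rather than at $\overline{0}$. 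Next I differentiate this identity once more in $a_{k_1}$; only the $g_n(\overline{a})$ inside the cosine depends on $a_{k_1}$, and applying the chain rule together with $\theta'(t)=\tfrac12\ln(t/(2\pi))$ gives
\[
\theta'(g_n(\overline{a}))-\ln(k_2+1)=\tfrac12\ln\!\left(\frac{g_n(\overline{a})}{2\pi(k_2+1)^2}\right),
\]
so that
\[
\frac{\partial^2\Delta_n}{\partial a_{k_1}\partial a_{k_2}}(\overline{a})
=-\frac{1}{2\sqrt{k_2+1}}\sin\!\bigl(\theta(g_n(\overline{a}))-\ln(k_2+1)g_n(\overline{a})\bigr)\,\ln\!\left(\frac{g_n(\overline{a})}{2\pi(k_2+1)^2}\right)\cdot\frac{\partial g_n}{\partial a_{k_1}}(\overline{a}).
\]
Substituting the expression for $\partial g_n/\partial a_{k_1}(\overline{a})$ from Lemma \ref{Lem:7.2} produces exactly the claimed product formula, with the prefactor $-1/(4Z''(g_n(\overline{a});\overline{a}))$ arising from the $1/2$ of this step combined with the $1/(2Z''(\cdot))$ from the lemma.

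\smallskip

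Finally I specialize to $\overline{a}=\overline{0}$. Here $g_n(\overline{0})=g_n$, and at $\overline{a}=\overline{0}$ one has $Z_N(t;\overline{0})=\cos(\theta(t))$, so
\[
Z''(g_n;\overline{0})=-\cos(\theta(g_n))\,(\theta'(g_n))^2=\frac{(-1)^{n+1}}{4}\ln^2\!\left(\frac{g_n}{2\pi}\right),
\]
as recorded in the proof of Lemma \ref{Lem:7.2}. Using $\theta(g_n)=\pi n$ and the identity $\sin(\pi n-x)=(-1)^{n+1}\sin(x)$, each of the two sine factors contributes a $(-1)^{n+1}$; the two signs multiply to $1$, and combined with $-1/(4Z''(g_n;\overline{0}))=(-1)^{n}/\ln^2(g_n/(2\pi))$ one obtains the stated evaluation at $\overline{0}$.

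\smallskip

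The computation is entirely mechanical once the extremal-point identity is used to eliminate the $\partial Z_N/\partial t$ term at the first step; the only place one has to be careful is the sign bookkeeping at $\overline{a}=\overline{0}$, where the three separate factors of $(-1)^{n+1}$ (two from the sines and one from $Z''(g_n;\overline{0})$) have to combine correctly to the single $(-1)^n$ in the final formula. This sign check is the main thing to watch, but presents no genuine obstacle.
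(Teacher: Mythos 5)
Your proof is correct and follows essentially the same route as the paper's: both use the chain rule on $\Delta_n(\overline{a})=Z_N(g_n(\overline{a});\overline{a})$, exploit the extremal-point identity $\partial_t Z_N(g_n(\overline{a});\overline{a})=0$ to kill the gradient term at the first-derivative stage, then differentiate once more and substitute the gradient formula from Lemma \ref{Lem:7.2}. The only cosmetic difference is that the paper wraps this in auxiliary perturbation parameters $\epsilon_1,\epsilon_2$ before taking partials, whereas you differentiate directly in $a_{k_1},a_{k_2}$; the underlying computation, including the sign bookkeeping at $\overline{a}=\overline{0}$, is identical.
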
 

\begin{proof}
Consider the function 
\begin{multline} 
F_{k_1,k_2,\epsilon_1,\epsilon_2}(t;\overline{a}):= Z_N(t ; \overline{a}) + \frac{\epsilon_1}{\sqrt{k_1+1}} \cos(\theta(t)- \ln(k_1+1)t)+ \\ +\frac{\epsilon_2}{\sqrt{k_2+1}} \cos(\theta(t)- \ln(k_2+1)t)
\end{multline} 
and set 
\begin{equation} 
G_{k_1,k_2,\epsilon_1,\epsilon_2}(t;\overline{a}):= \frac{\partial}{\partial t} F_{k_1,k_2,\epsilon_1,\epsilon_2}(t; \overline{a}). 
\end{equation} 
Denote by $g_n(\overline{a}; \epsilon_1,\epsilon_2)$ the extremal point of $F_{k_1,k_2}(t;a_1,a_2)$ locally extending the gram point $g_n(\overline{a})$. Then, by definition, the discriminant can be written as 
\[
\Delta_n( a_1,...,a_{k_1}+\epsilon_1,...,a_{k_2}+\epsilon_2,..,a_N) = F_{k_1,k_2,\epsilon_1,\epsilon_2}(g_n(\overline{a}; \epsilon_1, \epsilon_2); \overline{a} ). 
\] 
Hence, by the chain rule, we have 
\begin{multline} 
\frac{\partial \Delta_{n}}{\partial \epsilon_1} ( a_1,...,a_{k_1}+\epsilon_1,...,a_{k_2}+\epsilon_2,..,a_N) =\frac{\partial}{\partial \epsilon_1} F_{k_1,k_2,\epsilon_1,\epsilon_2}(g_n(\overline{a}; \epsilon_1, \epsilon_2); \overline{a} )=\\= 
\frac{\partial g_n}{\partial \epsilon_1}(\overline{a}; \epsilon_1,\epsilon_2) \cdot G_{k_1,k_2,\epsilon_1,\epsilon_2}(g_n(\overline{a};\epsilon_1,\epsilon_2) ; \overline{a})+ \frac{\partial}{\partial \epsilon_1} F_{k_1,k_2,\epsilon_1,\epsilon_2} (g_n(\overline{a}; \epsilon_1,\epsilon_2) ; \overline{a})=\\=\frac{\partial}{\partial \epsilon_1} F_{k_1,k_2,\epsilon_1,\epsilon_2} (g_n(\overline{a}; \epsilon_1,\epsilon_2) ; \overline{a})=\\ 
=\frac{1}{\sqrt{k_1+1}} \cos(\theta(g_n(\overline{a};\epsilon_1,\epsilon_2))- \ln(k_1+1) g_n(\overline{a}; \epsilon_1,\epsilon_2)). 
\end{multline} 
Again, we use the fact that $g_n(\overline{a}; \epsilon_1,\epsilon_2)$ are, by definition, the solutions of 
\[
G_{k_1,k_2,\epsilon_1,\epsilon_2}(g_n(\overline{a}; \epsilon_1,\epsilon_2);\overline{a})=0.
\] 
Thus, for the second derivative we have 
\begin{multline} 
\frac{\partial^2 \Delta_{n}}{\partial \epsilon_1 \partial \epsilon_2} ( a_1,...,a_{k_1}+\epsilon_1,...,a_{k_2}+\epsilon_2,..,a_N)   =\\ 
=\frac{\partial}{\partial \epsilon_2} \left ( \frac{1}{\sqrt{k_1+1}} \cos(\theta(g_n(\overline{a};\epsilon_1,\epsilon_2))- \ln(k_1+1) g_n(\overline{a}; \epsilon_1,\epsilon_2)) \right )= \\ 
= -\frac{1}{2\sqrt{k_1+1}} \sin(\theta(g_n(\overline{a};\epsilon_1,\epsilon_2))- \ln(k_1+1) g_n(\overline{a}; \epsilon_1,\epsilon_2)) \cdot \ln \left ( \frac{g_n(\overline{a}; \epsilon_1,\epsilon_2)}{2 \pi (k+1)^2 } \right )  \frac{\partial g_n}{\partial \epsilon_2} (\overline{a} ; \epsilon_1,\epsilon_2). 
\end{multline} 
By substituting $(\epsilon_1,\epsilon_2)=(0,0)$ and applying Lemma \ref{Lem:7.2} the result follows.   
\end{proof} 
\begin{proof} [Proof of Theorem 7.1:] By Proposition \ref{prop:7.2} we have 
\begin{multline}
H_n(\overline{a}):=\sum_{k_1,k_2=1}^N \frac{\partial^2 \Delta_{n}}{\partial a_{k_1} \partial a_{k_2}} (\overline{0}) a_{k_1} a_{k_2},
=\\= \frac{(-1)^n}{\ln^2 \left ( \frac{g_n }{2 \pi } \right )} \sum_{k_1,k_2=1}^N  \prod_{i=1}^2 
\frac{\sin(\ln(k_i+1) g_n) \cdot \ln \left ( \frac{g_n }{2 \pi (k_i+1)^2 } \right )}{\sqrt{k_i+1} } a_{k_i}=\\=
\frac{(-1)^n}{\ln^2 \left ( \frac{g_n }{2 \pi } \right )} \left ( \sum_{k=1}^N  
\frac{\sin(\ln(k+1) g_n) \cdot \ln \left ( \frac{g_n }{2 \pi (k+1)^2 } \right )}{\sqrt{k+1} } a_{k}\right )^2=
4(-1)^n \left (  \frac{Z'(g_n; \overline{a})}{\ln \left ( \frac{g_n }{2 \pi } \right )}  \right )^2,
\end{multline} 
as required. 
\end{proof} 

Theorem \ref{thm:CM} shows that the magnitude of the Hessian $H_n$ is determined by the size of $Z'(g_n)$. We further have: 
\begin{theorem} \label{thm:Grad}
Moreover, 
\[ 
Z'(g_n ; \overline{r} ) = \frac{1}{4} (-1)^n \ln^2 \left ( \frac{g_n}{2 \pi} \right ) \overline{r} \cdot \nabla g_n(0), 
\]
where 
\[
\nabla  g_{n} (\overline{0}) := \left (  \frac{\partial g_{n}}{\partial a_1} (\overline{0}) ,...,\frac{\partial g_{n}}{\partial a_N} (\overline{0}) \right )
\]
is the gradient of $g_n(\overline{a})$ at $ \overline{a}=\overline{0}$.
\end{theorem}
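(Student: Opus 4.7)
The plan is to verify the identity by computing both sides explicitly from their definitions and observing that they match term-by-term. On the left, I differentiate the series
\[
Z_N(t;\overline{r}) = \cos\theta(t) + \sum_{k=1}^{N} \frac{r_k}{\sqrt{k+1}} \cos(\theta(t) - \ln(k+1)\,t)
\]
with respect to $t$. Using $\theta'(t) = \tfrac12\ln(t/(2\pi))$ and the angle-subtraction identity
\[
\sin(\theta(g_n) - \ln(k+1)g_n) = \sin(\pi n - \ln(k+1)g_n) = (-1)^{n+1}\sin(\ln(k+1)g_n),
\]
the core contribution vanishes (since $\sin\theta(g_n) = \sin(\pi n) = 0$) and the linear factor simplifies as $\theta'(g_n) - \ln(k+1) = \tfrac12\ln\!\bigl(g_n/(2\pi(k+1)^2)\bigr)$. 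This will give a clean closed-form expression
\[
Z'(g_n;\overline{r}) = \frac{(-1)^{n}}{2}\sum_{k=1}^{N}\frac{r_k\,\sin(\ln(k+1)g_n)\,\ln\!\bigl(g_n/(2\pi(k+1)^{2})\bigr)}{\sqrt{k+1}}.
\]

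On the right, I invoke Lemma \ref{Lem:7.2}, which already provides the partial derivatives $\partial g_n/\partial a_k(\overline{0})$ in closed form. Applying the same $\sin(\pi n - \cdot)$ simplification converts the factor $(-1)^{n+1}\sin(\theta(g_n)-\ln(k+1)g_n)$ appearing there into $\sin(\ln(k+1)g_n)$ (the two sign factors cancel), yielding
\[
\overline{r}\cdot\nabla g_n(\overline{0}) = \frac{2}{\ln^{2}\!\bigl(g_n/(2\pi)\bigr)}\sum_{k=1}^{N}\frac{r_k\,\sin(\ln(k+1)g_n)\,\ln\!\bigl(g_n/(2\pi(k+1)^{2})\bigr)}{\sqrt{k+1}}.
\]
Multiplying this expression by $\tfrac14(-1)^n\ln^{2}(g_n/(2\pi))$ recovers precisely the formula for $Z'(g_n;\overline{r})$ obtained above, closing the argument.

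In short, the proof is a direct computation, and there is no genuine obstacle: both sides reduce to the same explicit finite sum over $k$. The one point that requires care is the sign bookkeeping, specifically tracking the two $(-1)^{n+1}$ factors arising from $\cos(\pi n)$ (once inside $Z'$ and once inside $\partial g_n/\partial a_k$ via Lemma \ref{Lem:7.2}); these cancel, producing the overall factor $(-1)^n$ on the right-hand side.
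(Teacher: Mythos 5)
Your proof is correct and follows the same route as the paper: differentiate $Z_N(t;\overline{a})$ in $t$, evaluate at $g_n$ using $\theta(g_n)=\pi n$, and compare against the gradient expression from Lemma \ref{Lem:7.2}. In fact your sign bookkeeping is more careful than the paper's own: the paper's two intermediate displays each omit a compensating $(-1)^n$ factor (in $Z_N'(g_n;\overline{a})$ and in $\tfrac14\ln^2(g_n/2\pi)\,\partial g_n/\partial a_k$), yet the errors cancel and land on the correct final identity, which your version reaches without incident.
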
 
\begin{proof} 
The following holds 
\begin{multline} 
Z'_N(t; \overline{a}) = -\sin(\theta(t))\theta'(t) -\sum_{k=1}^{N} \frac{a_k}{\sqrt{k+1} } \sin ( \theta (t) - \ln(k+1) t) (\theta'(t)-\ln(k+1)) = 
\\=  -\frac{1}{2} \sin(\theta(t))\ln \left ( \frac{t}{2 \pi} \right )  -\sum_{k=1}^{N} \frac{a_k}{2\sqrt{k+1} } \sin ( \theta (t) - \ln(k+1) t)  \ln \left ( \frac{t}{2 \pi (k+1)^2} \right ). 
\end{multline} 
Hence, 
\[
Z'_N(g_n ; \overline{a})= \sum_{k=1}^{N} \frac{a_k}{2\sqrt{k+1} } \sin (\ln(k+1) g_n)  \ln \left ( \frac{g_n}{2 \pi (k+1)^2} \right ). 
\] 
But also, 
\[
\frac{1}{4} \ln^2 \left ( \frac{g_n}{2 \pi} \right ) \frac{\partial}{\partial a_k} g_n(\overline{0})=   (-1)^{n} \frac{\sin ( \ln(k+1) g_n)}{ 2 \sqrt{k+1} } \ln \left (\frac{g_n}{2 \pi (k+1)^2} \right ). 
\] 
Hence, 
\[
Z'(g_n ; \overline{a} ) = \frac{1}{4} (-1)^n \ln^2 \left ( \frac{g_n}{2 \pi} \right ) \overline{a} \cdot \nabla g_n(0),
\]
as required. 
\end{proof} 
Theorem \ref{thm:CM} and Theorem \ref{thm:Grad} together imply the following: 

\begin{cor} \label{Hess-grad} The following holds:  
\begin{enumerate} 
\item  The second-order Hessian $H_n$ is a measurement of the magnitude of the gradient $\nabla g_n(0)$. 
\item The direction of the shift of the $n$-th extremal point of $Z(t; \overline{a})$ with respect to $g_n$ is given by the sign of 
$(-1)^n Z'(g_n ; \overline{a})$.   
\end{enumerate}
\end{cor}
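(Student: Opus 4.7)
The plan is to derive both parts by directly combining the two preceding theorems algebraically; no new analysis is required.

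For part (1), I would substitute the expression from Theorem \ref{thm:Grad},
\[
Z'(g_n;\overline{a}) = \tfrac{1}{4}(-1)^n \ln^2\!\left(\tfrac{g_n}{2\pi}\right)\,\overline{a}\cdot\nabla g_n(\overline{0}),
\]
into the Hessian formula from Theorem \ref{thm:CM}, namely
\[
H_n \;=\; 2(-1)^n\!\left(\frac{Z'(g_n;\overline{a})}{\ln(g_n/(2\pi))}\right)^{\!2}.
\]
The factor $\ln(g_n/(2\pi))$ in the denominator cancels one power of $\ln^2(g_n/(2\pi))$ from the numerator, leaving
\[
H_n \;=\; \frac{(-1)^n}{8}\,\ln^2\!\left(\tfrac{g_n}{2\pi}\right)\bigl(\overline{a}\cdot\nabla g_n(\overline{0})\bigr)^{2}.
\]
Because $\ln^2(g_n/(2\pi))$ is a positive scalar depending only on $g_n$, this identity exhibits $|H_n|$ as proportional to the square of the directional derivative $\overline{a}\cdot\nabla g_n(\overline{0})$; consequently $H_n$ quantifies the magnitude of $\nabla g_n(\overline{0})$ along $\overline{a}$ (and, by Cauchy--Schwarz, bounds $|\nabla g_n(\overline{0})|$ from below when evaluated in a direction aligned with the gradient).

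For part (2), I would use the first-order Taylor expansion of the extended Gram point,
\[
g_n(\overline{a}) - g_n \;=\; \overline{a}\cdot \nabla g_n(\overline{0}) + O(\|\overline{a}\|^{2}),
\]
so that the direction in which the $n$-th extremal point shifts is governed by the sign of $\overline{a}\cdot\nabla g_n(\overline{0})$. Solving the identity of Theorem \ref{thm:Grad} for this quantity yields
\[
\overline{a}\cdot\nabla g_n(\overline{0}) \;=\; \frac{4(-1)^n\,Z'(g_n;\overline{a})}{\ln^2(g_n/(2\pi))}.
\]
Since the denominator $\ln^2(g_n/(2\pi))$ is strictly positive for $g_n>2\pi$, the sign of the shift equals the sign of $(-1)^n Z'(g_n;\overline{a})$, which is the claim of (2).

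I do not anticipate a genuine obstacle: the content of the corollary is essentially a repackaging of Theorems \ref{thm:CM} and \ref{thm:Grad}. The only subtlety is bookkeeping of signs and of the factor $(-1)^n$, together with recording that $\ln(g_n/(2\pi))>0$ for all Gram points of interest so that the qualitative statements about ``magnitude'' and ``sign'' are unambiguous.
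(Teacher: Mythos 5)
Your proof is correct and follows the natural route: substitute the gradient identity of Theorem \ref{thm:Grad} into the Hessian formula of Theorem \ref{thm:CM}, then read off the magnitude and sign consequences. The paper itself states Corollary \ref{Hess-grad} without a written proof as a direct consequence of those two theorems, and your derivation is precisely the intended one (the Cauchy--Schwarz aside is unnecessary --- when $\overline{a}$ is aligned with $\nabla g_n(\overline{0})$ the inner product is exactly $\|\overline{a}\|\,\|\nabla g_n(\overline{0})\|$, not merely a lower bound --- but this does not affect correctness).
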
 

The corrected Gram's law implies that for bad Gram points the second order term, represented by the Hessian $H_n$, is expected to become crucial in order to compensate on the first-order violation of the classical law. Corollary \ref{Hess-grad} further shows that a large second-order Hessian $H_n$ is expressed by the fact that the Gram point must experience a considerable positional shift for the approximation to be valid. Conversely, a small Hessian at a Gram point signifies that the first order term predominates in determining the position of the Gram point. For instance, consider: 

\begin{ex}[The Hessians of $g_{90}$ and $g_{126}$]
By direct computation, the Hessians of $\Delta_n(r)$ for the good Gram point $g_{90}$ and the bad Gram point $g_{126}$ are given by: 
\[ 
\begin{array}{ccc} H_{90} =0.00203615 & ; & H_{126} = 2.22893 \end{array} 
\]  
	In accordance with Example \ref{ex:1}, the Hessian $H_{90}$ is relatively small which implies that for the good Gram point $g_{90}$ the discriminant is largely determined by its first-order approximation $Z(g_n;r)$ and $g_n(r)$ hardly changes position. In contrast, the Hessian for the bad Gram point $g_{126}$ is considerably large. This conveys that $g_{126}(r)$ undergoes a considerable displacement from its original position $g_{126}$ as $r$ increases. This practical shift is visually demonstrated as a shift to the right in Fig. \ref{fig:f4}.
\end{ex}

 \begin{rem} Adding higher-order terms improves the level of accuracy of the approximation near $\overline{a}=\overline{0}$. However, the level of accuracy around $\overline{a}=\overline{1}$, can actually decrease. In particular, obtaining a reasonable approximation of $\Delta_n ( \overline{1})$ via Taylor approximation might require a rather substantial amount of higher degree terms, which would be completely infeasible to compute in practice, for general $n \in \mathbb{Z}$. 
\end{rem} 
 
 \section{An Experimental Investigation of the Viscosity of Gram Points}\label{s:8} 

In this section, drawing inspiration from fluid dynamics, we introduce the concept of 'viscosity' to quantify the positional shifting behaviour and conduct an initial empirical investigation of its properties. Generally, Corollary \ref{Hess-grad} straightforwardly suggests that if $g_n$ is a bad Gram point, one should anticipate a significant shift in the position of $g_n(r)$ itself to fulfil the corrected Gram's law at the second-order level. In other words, bad Gram points $g_n$ are expected to demonstrate some correlation between the values of $Z(g_n)$ and $Z'(g_n)$. For a bad Gram point, we interpret $Z(g_n)$ as pushing the extremal point downward towards zero to create a collision, while $Z'(g_n)$ pushes the extremal point sideways to avoid a collision. Thus, let us introduce the following definition:
 
 \begin{dfnnonum}[Viscosity of a Gram point] For any $n \in \mathbb{Z}$ we refer to
 of the $Z$-function 
\[
\mu(g_n) := \left \vert \frac{Z'(g_n)}{Z(g_n)} \right \vert
\]
as \emph{the viscosity of the Gram point $g_n$.}
 \end{dfnnonum}
 
 The viscosity of a Gram point $\mu(g_n)$ is essentially a measure of how much a Gram point 'resists' a change in its position, analogous to how viscosity in a fluid quantifies its resistance to flow. In essence, a high viscosity at a Gram point implies a lower tendency for the point to maintain its position and vice versa. 
 
 \begin{rem}
 The occurrence of the logarithmic derivative $\frac{Z'(t)}{Z(t)}$ in the study of the zeros of $Z(t)$ is not surprising, given its integral role as manifested by the classical formula:
\[
\frac{Z'(t)}{Z(t)}=i \theta'(t) -\frac{1}{it-\frac{1}{2}}+\sum_{\rho} \frac{1}{\frac{1}{2}+it-\rho}- \frac{1}{2} \frac{\Gamma'(\frac{5}{4}+\frac{it}{2})}{\Gamma(\frac{5}{4}+\frac{it}{2})}+\frac{1}{2}\ln(\pi),
\]
This relation, outlined in Section 3.2 of \cite{E}, underscores the inherent connection between the logarithmic derivative of $Z(t)$ and its zeros, further motivating our current study. However, in our case our interest is focused on the values of the logarithmic derivative at the bad Gram points $g_n$.
\end{rem} 

From our second-order approximation of the discriminant $\Delta_n(r)$, we thus come to anticipate that $\mu(g_n)$ of bad Gram points will exhibit different unique features compared to those of general general Gram points. Indeed, consider Fig. \ref{fig:f51} which presents the outcome of a numerical computation of the viscosity \(\mu(g_n)\) for the first \(n=1,\ldots,1000\) general Gram points. 
\begin{figure}[ht!]
	\centering
		\includegraphics[scale=0.45]{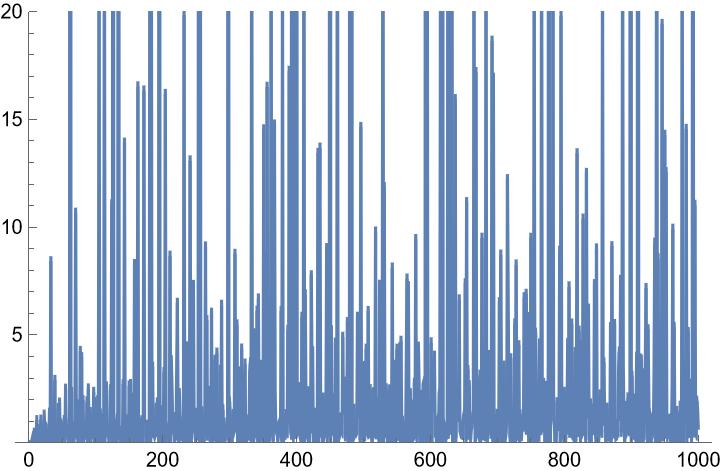} 	
		\caption{\small{Viscosity \(\mu(g_n)\) for the first \(n=1,\ldots,1000\) Gram points.}}
\label{fig:f51}
\end{figure}

As observed in Fig. \ref{fig:f51}, the values of the viscosity \(\mu(g_n)\) for general Gram points appear to be distributed without a discernible pattern or lower boundary. In comparison, Fig. \ref{fig:f52} illustrates an intriguing behaviour for the first $n=1,\ldots,1000$ bad Gram points, denoted as \(g_n^{bad}\).
\begin{figure}[ht!]
	\centering
		\includegraphics[scale=0.45]{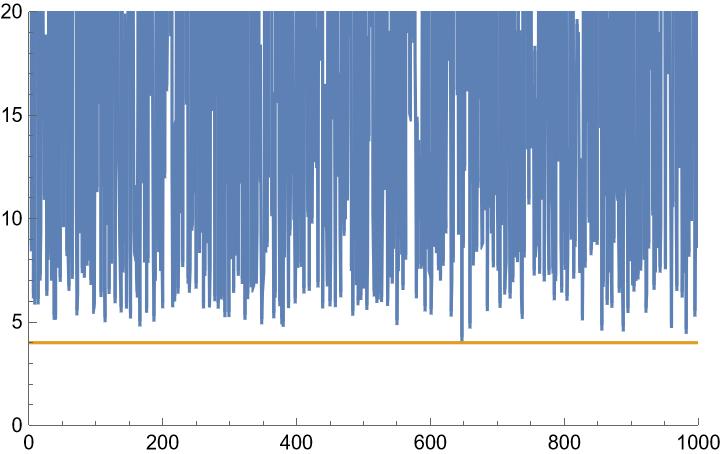} 	
		\caption{\small{Viscosity \(\mu(g^{bad}_n)\) for the first bad \(n=1,\ldots,1000\) Gram points, apparently bounded from below by a constant \(C > 4\).}}
\label{fig:f52}
\end{figure}

Remarkably, the data suggests that the viscosity \(\mu(g_n^{bad})\) for bad Gram points appears to be bounded from below by a constant \(C\), which is marginally greater than 4. This stands in contrast to the general case and represents what seems to be an unforeseen characteristic typical of bad Gram points. However, having observed this previously undiscovered bounded behaviour of \(\mu(g_n^{\text{bad}})\), we encounter another surprising layer of complexity as the computational range is extended further. In fact, as we proceed to higher values, a sparse subset of bad Gram points is discovered to sporadically defy this lower bound, yielding bad gram points with viscosity dramatically below $C$. We will refer to such unusual instances of bad Gram points as \emph{corrupt} Gram points. For instance, the 9807962-th Gram point is corrupt and its viscosity is 
\[ 
\mu(g_{9807962})=0.0750883.
\] 
Figure \ref{fig:f523} shows the viscosity \(\mu(g^{bad}_n)\) of all the bad Gram points arising between the $2.4 \cdot 10^7$-th and $2.43 \cdot 10^7$-th Gram points, with the corrupt points marked red: 
\begin{figure}[ht!]
	\centering
		\includegraphics[scale=0.3]{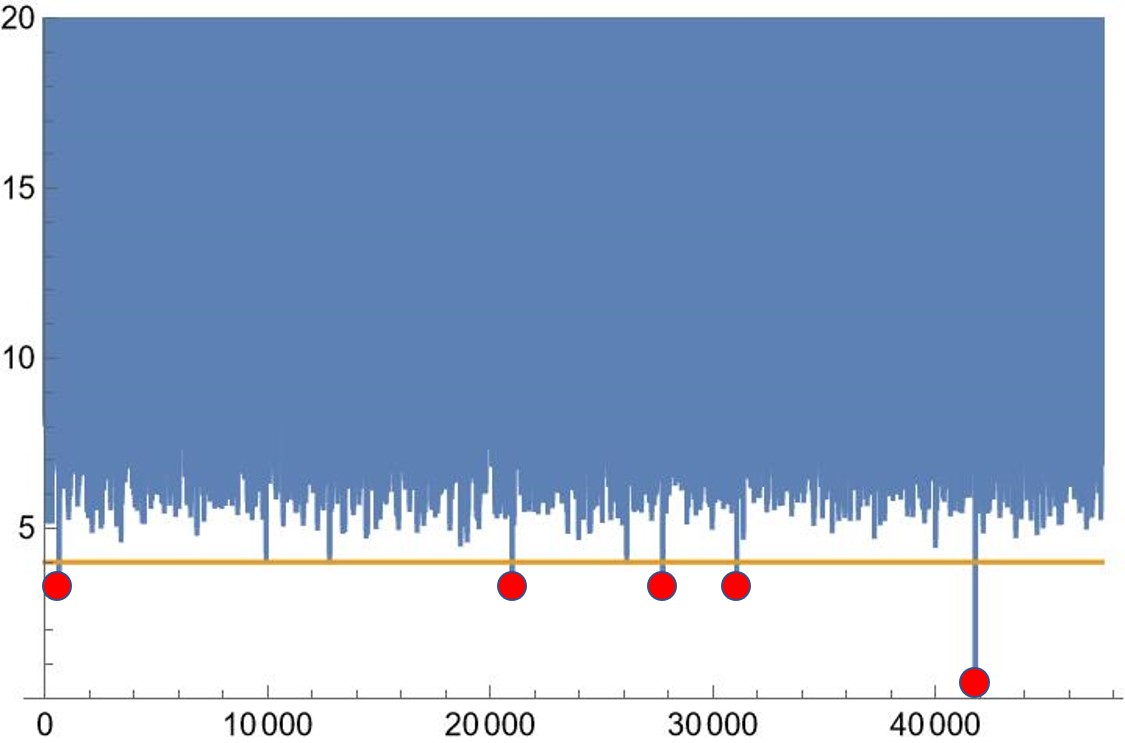} 	
		\caption{\small{Viscosity \(\mu(g^{bad}_n)\) of the bad Gram points between the $2.4 \cdot 10^7$-th and $2.43 \cdot 10^7$-th Gram points with corrupt points marked red.}}
\label{fig:f523}
\end{figure}

The emergence of such corrupt Gram points may initially seem contradictory to the insights we've developed thus far. However, it turns that these corrupt Gram points are not mere anomalies but exhibit a distinctive characteristic by themselves: they are actually observed to occur only under very specific conditions. Let us recall the following classic definition due to \cite{Ro} 
\begin{dfn}[Gram block]
A consecutive collection $\left \{ g_n,g_{n+1},...,g_{n+N} \right \}$ of Gram points is called a \emph{Gram block} if $g_n$ and $g_{n+N}$ are good Gram points while $g_{n+j}$ are bad Gram points for $j=1,...,N-1$. We refer to a bad Gram point as \emph{isolated} if it is the middle point of a block with $N=2$.  
\end{dfn} 	
For instance, the corrupt Gram point from the example above is part of the following Gram block of length $N=3$
\[ 
\left \{ 9807960,9807961,9807962,9807963 \right \}.
\]
Our experimental observations in this section are thus summarized in the following conjecture: 

\begin{conj}[G-B-G] \label{con:8.1}
Corrupt Gram points must be non-isolated. In particular, if a bad Gram point $g_n$ is in a triplet $\left \{ g_{n-1},g_n, g_{n+1} \right \}$ such that $g_{n-1}$ and $g_{n+1}$ are good Gram points, then $g_n$ cannot be corrupt.  
\end{conj}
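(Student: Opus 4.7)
The plan is to combine the second-order approximation of $\Delta_n(r)$ from Theorem \ref{thm:CM} with the corrected Gram's law (Theorem \ref{mainthm:B}) to derive a constraint on the viscosity, and then use the G-B-G sign structure to bound $|Z(g_n)|$ away from zero, forcing $|Z'(g_n)|$ to dominate. Specifically, I would write
\[
(-1)^n \Delta_n(1) \;\approx\; (-1)^n Z(g_n) \;+\; \left(\frac{Z'(g_n)}{\ln(g_n/2\pi)}\right)^{\!2},
\]
which follows since $(-1)^n H_n/2 = (Z'(g_n)/\ln(g_n/2\pi))^2$ is always non-negative. If $g_n$ is bad, the first-order term $(-1)^n Z(g_n)$ is negative, so the second-order squared term must compensate in order for the corrected Gram's law to produce a positive discriminant. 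Dividing through by $|Z(g_n)|$ gives an inequality of the form $\mu(g_n)^2 \gtrsim |Z(g_n)|\cdot \ln^2(g_n/2\pi)$, which is the raw second-order criterion.

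Next I would analyze the G-B-G configuration to pin down the \emph{lower} bound on $|Z(g_n)|$ that elevates this into a constant viscosity bound. The sign analysis is clean: if $g_{n-1}$ and $g_{n+1}$ are good but $g_n$ is bad, all three values $Z(g_{n-1}), Z(g_n), Z(g_{n+1})$ share the same sign $(-1)^{n-1}$. By the Riemann--von Mangoldt counting function and the expected density of one zero per Gram interval, there must be two real zeros of $Z$ inside $(g_{n-1},g_{n+1})$, both lying on the same side of $g_n$ (since no sign change straddles $g_n$). These two zeros are in particular confined to a single Gram subinterval of length $\sim 2\pi/\ln(g_n/2\pi)$, which forces the slope $|Z'(g_n)|$ to scale with $\ln(g_n/2\pi)$ times the typical amplitude of $Z$ near $g_n$. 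This is precisely what is needed to convert the scaling bound above into the constant bound $\mu(g_n) > 4$; the specific constant $4$ should emerge from comparing the leading term $\frac{1}{2}\ln(g_n/2\pi)$ of $\theta'(g_n)$ against the normalization of the Riemann--Siegel main sum (the prefactor $1/\sqrt{k+1}$ and the factor of $2$ in the Hardy form \eqref{eq:Z-Hardy}), using the fact that in an isolated bad configuration the ``missing'' sign change must be reconstructed entirely by derivative information rather than by the function value itself.

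Finally, I would contrast the isolated case with longer Gram blocks ($N\ge 3$) to explain why the corrupt phenomenon is compatible with the conjecture. When $g_n$ sits in the interior of a long bad block, consecutive bad Gram points share the burden of accommodating displaced zeros: the pair of nearby zeros can be ``borrowed'' from neighboring intervals, so $|Z(g_n)|$ is no longer forced to be small, and correspondingly $\mu(g_n)$ need not be large. The concrete example at $n = 9807962$ inside the block $\{g_{9807960},\ldots,g_{9807963}\}$ illustrates this: the two missing sign changes are distributed across multiple bad intervals, decoupling $Z(g_n)$ and $Z'(g_n)$.

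The main obstacle is the quantitative step linking the confinement of the two nearby zeros to the specific constant $4$ in the viscosity bound. While the qualitative statement that $|Z'(g_n)|$ must be comparable to $\ln(g_n/2\pi)\cdot \sqrt{|Z(g_n)|}$ follows from the second-order approximation, upgrading this to a uniform constant bound requires controlling the $O(r^3)$ error terms in the Taylor expansion near $r=1$, which is known to be delicate (cf. the remark at the end of Section \ref{s:7}). For this reason I expect the rigorous result will require either a refined global version of the discriminant analysis going beyond second order, or a direct conditional argument assuming Montgomery's pair correlation conjecture to control the minimum spacing of zeros near isolated bad Gram points.
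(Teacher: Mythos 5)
This statement is presented in the paper as an open \emph{conjecture}, not a theorem: Section \ref{s:10} states explicitly that a comprehensive proof of the G-B-G property is ``still beyond reach,'' and that section's investigation (cosine/sine adjustments $Z_c^{\pm}(g_n)$, $Z_s^{\pm}(g_n)$, localized partial-sum behavior, and Monte-Carlo baselines) ends with the admission that the forced correlation between $Z(g_n)$ and $Z'(g_n)$ in the isolated case remains unexplained. So there is no proof in the paper to match your attempt against; the honest comparison is with an open problem, and your sketch takes a different heuristic route (second-order Taylor plus zero-confinement) from the paper's partial-sum and adjustment analysis.

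As a proof attempt it has two concrete gaps. First, your central step evaluates $\Delta_n(1)$ by the second-order Taylor expansion of $\Delta_n(r)$ about $r=0$. The paper's own remark at the end of Section \ref{s:7} warns exactly against this: accuracy near $\overline{a}=\overline{1}$ is not guaranteed and may require many higher-order terms. Worse, the inequality you derive in that first paragraph never actually uses the G-B-G hypothesis, so if it were valid it would impose the same viscosity lower bound on \emph{all} bad Gram points, contradicting the empirically observed existence of corrupt (non-isolated) Gram points such as $n=9807962$. That contradiction is a structural signal that the truncation is where the argument fails, and your third paragraph's ``shared burden'' explanation does not identify which step breaks for non-isolated blocks. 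Second, the zero-confinement claim is unjustified: the identical sign of $Z(g_{n-1}),Z(g_n),Z(g_{n+1})$ only forces an even number of zeros in each subinterval, and pinning the count at exactly two rests on Rosser's rule, whose violations the paper itself discusses at the end of Section \ref{s:12}. Even granting two nearby zeros on one side of $g_n$, no quantitative bound on $|Z'(g_n)|/|Z(g_n)|$ follows without additional control on how flat and how small $Z$ can be between them, and the specific constant $4$ is never extracted. The sketch is a plausibility narrative, not a proof, and it does not close the gap the paper leaves open.
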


We will see in the following sections that the bounded nature of \(\mu(g_n^{\text{bad}})\) for G-B-G points which is expressed in Conjecture \ref{con:8.1} and is observed empirically to extend substantially beyond the sample examples presented here, carries significant and profound implications for the study of Gram points as well as the distribution of zeros of the \(Z\)-function. Let us conclude this section with the following remark regarding notions of repulsion between consecutive zeros of the \(Z\)-function:
\begin{rem}[Repulsion and Montgomery's Conjecture] \label{rem:MPC}
The observed viscosity bound, \(\mu(g_n) > C\), implies that a large absolute value of \(Z(g_n)\) necessitates a large absolute value for \(Z'(g_n)\). Consequently, if there exists a force pushing the value of \(Z(t)\) at \(g_n\) towards the axis, there must also be a lateral force pushing it sideways. This suggests that, on an infinitesimal level, an extremal G-B-G point \(g_n(r)\) resists a change in sign. Given that a change in sign at \(g_n(r) \) would arise when two consecutive zeros collide, this viscosity bound unveils a previously unrecognized repulsion property between adjacent zeros of the \(Z\)-function. For \(\alpha \leq \beta\) define
\begin{equation}
A(T ; \alpha, \beta) := \left \{ (\rho,\rho') \mid 0 < \rho, \rho' < T \text{ and } \frac{2 \pi \alpha}{\ln(T)} \leq \rho - \rho' \leq \frac{2 \pi \beta}{\ln(T)} \right \}.
\end{equation} Montgomery's famous pair correlation conjecture (under the assumption of RH) states that
\begin{equation}
N(T ; \alpha, \beta) := \sum_A 1 \sim \left( \int_{\alpha}^{\beta} \left( 1 - \frac{\sin(\pi u)}{\pi u} \right) \, du + \delta_0([\alpha, \beta]) \right) \frac{T}{2 \pi} \ln(T)
\end{equation}
as \(T \to \infty\), see \cite{M}. Due to the decay of the integral for small \(u\), this conjecture is often interpreted as expressing statistical repulsion between consecutive zeros. 

It is crucial to note that while both our approach and Montgomery's conjecture discuss a form of repulsion between zeros, the nature of this repulsion is fundamentally different. In Montgomery's conjecture, the repulsion is statistical and encapsulates a property of zero distributions in the large scale. In contrast, our approach, guided by the dynamic nature of the A-philosophy, reveals a repulsion property which occurs at the infinitesimal level between specific pairs of consecutive zeros. This explicit, dynamic behaviour might be a foundational mechanism whose existence can be viewed as implicitly anticipated by Montgomery's statistical conjecture.
\end{rem}

 \section{The Failure of the G-B-G Property for the DH-Function }\label{s:9}
 The Davenport-Heilbronn functions are a specialized class of Dirichlet functions. Unlike the Riemann zeta function, these functions are known to have zeros that deviate from the critical line, thereby violating their corresponding RH property \cite{DH,T}. This enigmatic contrast between the behaviours of the Riemann zeta function and the Davenport-Heilbronn functions is often presented as a compelling illustration of the elusive nature of RH. We define the function as follows:

\begin{dfn}[DH-function]
We define the Davenport-Heilbronn function, \(\mathcal{D}(s)\), as
\[
\mathcal{D}(s)= \frac{(1-i \kappa)}{2} L(s,\chi_{5,2}) + \frac{(1+i \kappa)}{2} L(s,\overline{\chi}_{5,2}),
\]
where \(\kappa=\frac{\sqrt{10-2\sqrt{5}}-2}{\sqrt{5}-1}\).
\end{dfn}
The function satisfies the functional equation \( \xi(s)=\xi(1-s) \), where \[ \xi(s) = \left( \frac{\pi}{5} \right)^{-\frac{s}{2}} \Gamma \left( \frac{1+s}{2} \right) \mathcal{D}(s). \] To carry out an analysis analogous to our earlier treatment of the \( Z \)-function, we introduce the core function of \( \mathcal{D}(s) \):
\[
\mathcal{D}_0(s):=\frac{1}{2} \left[ \left( \frac{\pi}{5} \right)^{-\frac{s}{2}} \Gamma \left( \frac{1+s}{2} \right) + \left( \frac{\pi}{5} \right)^{\frac{s-1}{2}} \Gamma \left( \frac{2-s}{2} \right) \right].
\]
Similar to the \( Z \)-function, we can define the Davenport-Heilbronn \( Z \)-function, \( Z^{DH}(t) \), the \( A \)-space \( Z_N^{DH}(t; \overline{a}) \), and the Gram discriminant \( \Delta_n^{DH}(r) \). The zeros and extremal points of the core function \( Z_0^{DH}(t) \) are given by the following relations:
\begin{equation}
\begin{array}{ccc}
t^{DH}_n:=\frac{2 \pi (n - \frac{5}{8} ) }{W_0 (5 e^{-1}(n-\frac{5}{8} ))} & ; &
g^{DH}_n:=\frac{2 \pi (n - \frac{1}{8} ) }{W_0 (5 e^{-1}(n-\frac{1}{8} ))}.
\end{array}
\end{equation}

Our attention is particularly drawn to the first pair of zeros deviating from the critical line, which occur near the Gram point \( g^{DH}_{44} \). This deviation is depicted in Figure \ref{fig:f5}, where we plot the Gram discriminant \( \Delta^{DH}_{44}(r) \) and its first order approximation \(Z_N^{DH} (g_n^{DH}; r) \) over the interval \( 0 \leq r \leq 1 \).
\begin{figure}[ht!]
	\centering
		\includegraphics[scale=0.4]{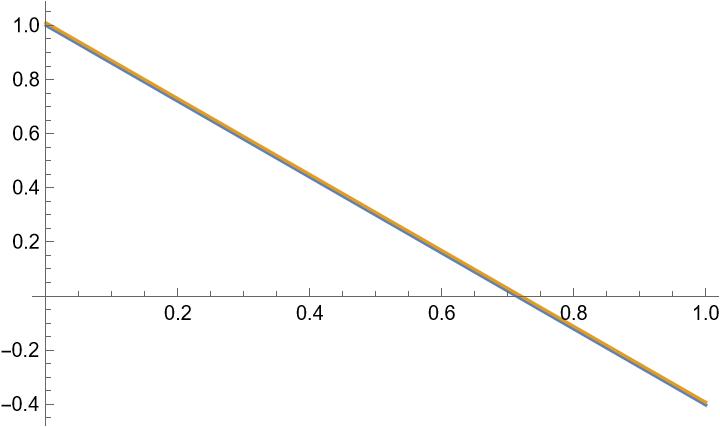} 	
		\caption{\small{\( \Delta^{DH}_{44} (r) \) (blue) and \(Z_N^{DH} (g_n^{DH}; r) \) (brown) for \( 0 \leq r \leq 1 \).}}
\label{fig:f5}
\end{figure}

The behaviour exhibited by $\Delta^{DH}_{44} (r)$ is seen to be a sort of mixture of the behaviour of $\Delta_{91}(r)$ and $\Delta_{126}(r)$ for the $Z$-function, presented in Example \ref{ex:1}. In the sense that, on the one hand, the first-order approximation $Z_N^{DH}(g_n^{DH} ; r)$ is seen to be in superb alignment with $\Delta^{DH}_{44}(r)$ (like for the good Gram point $g_{91}$) while, on the other hand, it is seen to nevertheless violate the Gram law (as for the bad Gram point $g_{126}$). 

In particular, Fig. \ref{fig:f5} shows that, in strict contrast to the case of the $Z$-function, for the Davenport-Heilbron function $Z^{DH}(t)$, the violation of Gram's law is not the result of non-linearity of the discriminant $\Delta_n^{DH}(r)$, but rather represents a genuine violation of the corrected Gram's law, as well. Note that this violation of the corrected Gram's law, in contrast to the case of the bad Gram point $g_{126}$, is the result of an unavoidable collision between the zeros $t^{DH}_{44}(r)$ and $t^{DH}_{45}(r)$ in the transition from $r=0$ and $r=1$, which will occur for any curve in the parameter space connecting the core $Z^{DH}_N(t;0)$ to $Z_N^{DH}(t;1)$. We thus have: 
	\begin{cor}
The Davenport-Heilbronn function $D(s)$ violates the corrected Gram's law. 

\end{cor}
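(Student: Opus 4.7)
The plan is to mirror the proof of Theorem \ref{thm:B} in the setting of the Davenport--Heilbronn function, using the fact that $\mathcal{D}(s)$ is known to possess zeros off the critical line while the core function $\mathcal{D}_0$ has all real zeros given explicitly by the Lambert-$W$ formula. First I would transcribe Theorem \ref{thm:A} to the DH-setting: since the coefficients defining $Z^{DH}_N(t;\overline{a})$ are real and $\theta^{DH}$ satisfies the analogous conjugation identity, the zero set of $Z^{DH}_N(t;\gamma(r))$ remains self-conjugate along any real curve $\gamma$, so a real zero can leave the real axis only by colliding with a consecutive real zero at a multiple-zero event.

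Next I would localize the obstruction. The zeros $t^{DH}_n$ of the core $\mathcal{D}_0$ are all real; in particular $t^{DH}_{44}$ and $t^{DH}_{45}$ are real and $\Delta^{DH}_{44}(0;\gamma) = (-1)^{44} = 1 > 0$ for every curve $\gamma$ starting at the origin. On the other hand, the classical Davenport--Heilbronn result produces a pair of non-real zeros of the completed function $Z^{DH}(t)$ clustering near $g^{DH}_{44}$. Thus, whatever path $\gamma$ is chosen in the DH parameter space with $\gamma(0)=\overline{0}$ and $\gamma(1)=\overline{1}$, the two real zeros of $Z^{DH}_N(t;0)$ indexed $44$ and $45$ cannot both remain real at $r=1$, so they must collide at some $r^{*}\in(0,1]$.

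Finally I would invoke the DH-analogue of the proof of Theorem \ref{thm:B}. The same chain-rule computation shows that the Gram discriminant $\Delta^{DH}_{44}(r;\gamma)$ retains a definite sign precisely while $g^{DH}_{44}(r;\gamma)$ does not coincide with a zero of $Z^{DH}_N(t;\gamma(r))$; the collision at $r^{*}$ is exactly such a coincidence. Hence for every curve $\gamma$ the discriminant $\Delta^{DH}_{44}(r;\gamma)$ either becomes degenerate at $r^{*}$ or reverses sign before $r=1$. In either case no non-degenerate curve $\gamma_{44}^{DH}$ can exist with $(-1)^{44}\Delta^{DH}_{44}(1;\gamma_{44}^{DH})>0$, so the corrected Gram's law fails at $n=44$ and the corollary follows.

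The step I expect to require the most care is the claim that the collision is \emph{unavoidable for every} admissible curve, rather than just for the linear curve shown in Figure \ref{fig:f5}. To make this rigorous I would combine the self-conjugacy from Theorem \ref{thm:A} with a parity/counting argument on a fixed interval $I$ containing $t^{DH}_{44}$ and $t^{DH}_{45}$ but no other zeros of $\mathcal{D}_0$: since the number of real zeros inside $I$ must decrease by two in passing from $r=0$ to $r=1$, and Theorem \ref{thm:A} forbids real zeros from leaving $I$ through the endpoints without first colliding inside, a collision inside $I$ at some $r^{*}$ is forced. Establishing the endpoint control (which amounts to tracking $g^{DH}_{43}(r;\gamma)$ and $g^{DH}_{45}(r;\gamma)$ along $\gamma$) is the only subtle point; everything else is a direct transcription of the machinery developed in Sections \ref{s:3} and \ref{s:4}.
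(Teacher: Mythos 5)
Your overall strategy tracks the paper's: you set up the DH analogues of Theorems \ref{thm:A} and \ref{thm:B}, localize near $g^{DH}_{44}$, and conclude from the known non-real zeros of $\mathcal{D}(s)$ that any curve in the DH parameter space must produce a collision that flips the sign of $\Delta^{DH}_{44}(r;\gamma)$. That is exactly the shape of the paper's argument, and the paper itself offers no more than this: it asserts that the collision between $t^{DH}_{44}(r)$ and $t^{DH}_{45}(r)$ is ``unavoidable for any curve'' and then states the corollary, with Figure \ref{fig:f5} illustrating the linear-curve case. So the corollary is presented in the paper essentially as an observational consequence of the DH analogue of Theorem \ref{thm:B} together with the classical fact that $\mathcal{D}(s)$ violates its RH; no detailed endpoint-tracking is supplied there either.

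The one place where your write-up claims more than it can deliver is the parity argument you sketch for making the ``unavoidable for every curve'' step rigorous. You assert that ``Theorem \ref{thm:A} forbids real zeros from leaving $I$ through the endpoints without first colliding inside.'' Theorem \ref{thm:A} does not say that. It establishes only self-conjugacy of the zero set, i.e.\ that zeros can leave the real line only in conjugate pairs at a multiple-zero event. It puts no constraint on a real zero sliding continuously along the real axis and exiting a fixed interval $I$ through an endpoint; nothing in the self-conjugacy argument prevents that. Consequently the count of real zeros inside $I$ is not automatically conserved modulo collisions, and your parity argument as stated does not close. The genuine content that would be needed is exactly the endpoint control you flag at the end: one must track $g^{DH}_{43}(r;\gamma)$ and $g^{DH}_{45}(r;\gamma)$ (or, equivalently, use the non-degeneracy hypothesis on neighbouring Gram indices) to pin the moving boundary of $I$. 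Since the paper does not attempt this either, identifying it as the hard step is correct, but attributing its resolution to Theorem \ref{thm:A} is a gap.

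A cleaner route, and the one the paper implicitly relies on, is to forgo the explicit index-$44$ collision argument entirely: apply the DH analogue of Theorem \ref{thm:B} as an equivalence, note that $\mathcal{D}(s)$ is known to violate its RH, and conclude by contraposition that for some $n$ no non-degenerate curve with $(-1)^n\Delta^{DH}_n(1;\gamma)>0$ can exist. The numerics at $n=44$ then merely exhibit where the failure is located. This formulation avoids the endpoint-tracking problem altogether and is what makes the paper's one-line passage to the corollary logically coherent.
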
   
Figure \ref{fig:f6.1} shows the graphs of $Z^{DH}_N(t;r)$ in the range $t \in [g^{DH}_{44} -2,g^{DH}_{44}+2]$ for various values of $r \in [0,1]$:  	

\begin{figure}[ht!]
	\centering
		\includegraphics[scale=0.35]{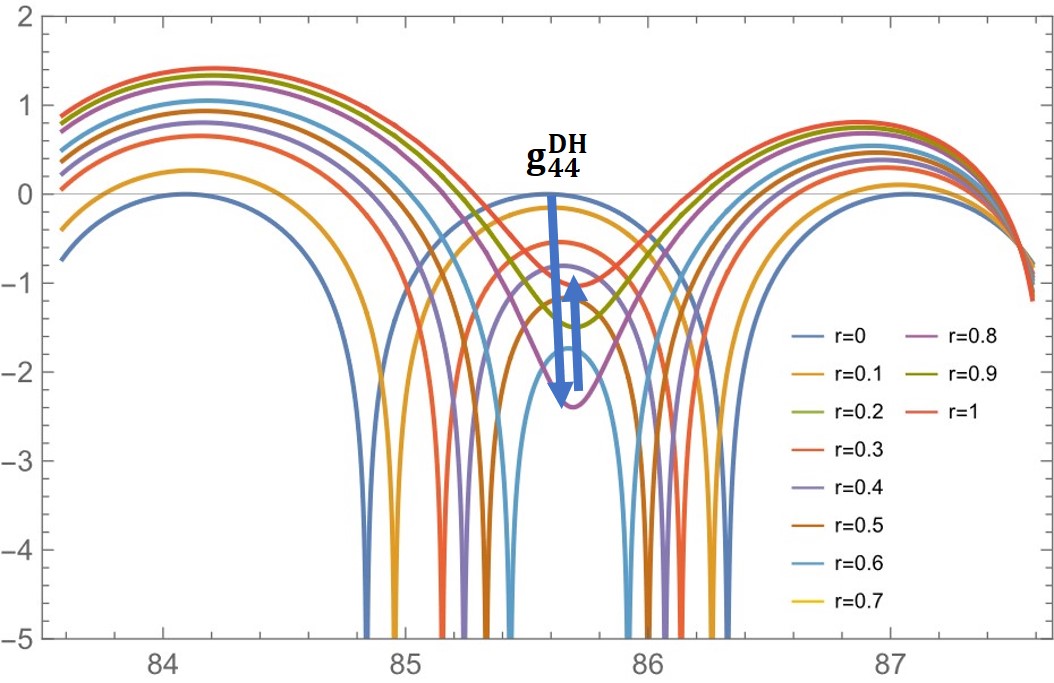} 	
		\caption{\small{Graphs of $Z^{DH}_N(t;r)$ in the range $t \in [g^{DH}_{44} -2,g^{DH}_{44}+2]$ for various values of $r \in [0,1]$.}}
\label{fig:f6.1}
	\end{figure} 
	
In particular, Fig. \ref{fig:f6.1} shows us that the extremal point $g_{44}^{DH}(r)$ itself presents a behaviour more similar to that of the good Gram point $g_{90}(r)$ rather than that of the bad Gram point $g_{126}(r)$, in the sense that it hardly exhibits a shift to the sides and remains rather almost fixed in its position as $r$ grows. In particular, the results of this section could be summarized in the following: 

\begin{cor}\label{cor:DH}
The Davenport-Heilbronn function $D(s)$ violates the G-B-G  repulsion property. 
\end{cor}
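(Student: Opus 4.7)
The plan is to exhibit an explicit isolated bad Gram point of $Z^{DH}(t)$ whose viscosity falls well below the empirical lower bound $C>4$ observed for the $Z$-function in Section~\ref{s:8}. The natural candidate, already singled out by the analysis surrounding Figures~\ref{fig:f5} and~\ref{fig:f6.1}, is $g_{44}^{DH}$, which sits adjacent to the first pair of off-critical-line zeros of $\mathcal{D}(s)$. First I would verify, using the closed form $g_n^{DH}=2\pi(n-\tfrac{1}{8})/W_0(5e^{-1}(n-\tfrac{1}{8}))$ for the extremal points of $Z_0^{DH}(t)$, that the sign condition $(-1)^n Z^{DH}(g_n^{DH})>0$ holds for $n=43$ and $n=45$ but fails for $n=44$. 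This confirms that the triple $\{g_{43}^{DH},g_{44}^{DH},g_{45}^{DH}\}$ forms a G-B-G configuration, so the hypothesis of Conjecture~\ref{con:8.1} genuinely applies to $g_{44}^{DH}$.

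The core of the argument is then an upper bound on $\mu(g_{44}^{DH})=|(Z^{DH})'(g_{44}^{DH})/Z^{DH}(g_{44}^{DH})|$. For the numerator, I would transport Theorem~\ref{thm:CM}, Theorem~\ref{thm:Grad}, and Corollary~\ref{Hess-grad} to the $\mathcal{D}$-setting: the observation, visible in Fig.~\ref{fig:f6.1}, that $g_{44}^{DH}(r)$ barely shifts as $r$ runs from $0$ to $1$ forces the gradient $\nabla g_{44}^{DH}(\overline{0})$ to be small, which by the DH analogue of Theorem~\ref{thm:Grad} propagates to $(Z^{DH})'(g_{44}^{DH})$. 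For the denominator, Fig.~\ref{fig:f5} shows that the first-order approximation $Z_N^{DH}(g_{44}^{DH};r)$ aligns nearly perfectly with $\Delta_{44}^{DH}(r)$ throughout $[0,1]$ and attains a value of magnitude of order one at $r=1$ with sign opposite to $(-1)^{44}$; in particular $|Z^{DH}(g_{44}^{DH})|$ is bounded below by a positive constant. Combining these two estimates yields $\mu(g_{44}^{DH})\ll 4$, contradicting the G-B-G repulsion bound and thereby establishing the corollary.

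The hardest step is not conceptual but one of transporting machinery: one must verify that the derivations of Sections~\ref{s:6} and~\ref{s:7} go through unchanged for the DH $A$-space $\mathcal{Z}_N^{DH}$. This reduction is essentially automatic because both proofs depend only on the additive structure $Z_N(t;\overline{a})=Z_0(t)+\sum\tfrac{a_k}{\sqrt{k+1}}\cos(\theta(t)-\ln(k+1)t)$ and on an explicit formula for $\theta'(t)$, and both ingredients have direct analogues here coming from the expansion of $\mathcal{D}_0(s)$ and the Stirling expansion of its $\Gamma$-factor that defines the DH Riemann-Siegel phase $\theta^{DH}(t)$.

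What remains is a finite-precision numerical evaluation of $Z^{DH}(g_{44}^{DH})$ and $(Z^{DH})'(g_{44}^{DH})$ directly from the Dirichlet $L$-series entering $\mathcal{D}(s)$; the only delicate point is certifying that the resulting viscosity is unambiguously below $4$ rather than merely close to the conjectural threshold, and for this it suffices to carry out the computation with enough significant digits to rule out a near-miss. Once this is in place, the corollary follows from the single data point $g_{44}^{DH}$, since a single violation is enough to falsify the G-B-G repulsion property.
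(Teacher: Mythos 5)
Your proposal is essentially the paper's argument, made explicit: both use $g_{44}^{DH}$ as the counterexample, infer that $(Z^{DH})'(g_{44}^{DH})$ is small from the negligible lateral shift of $g_{44}^{DH}(r)$ visible in Fig.~\ref{fig:f6.1} (via the DH analogue of Theorem~\ref{thm:Grad}), and combine this with the $O(1)$, wrong-sign value of $Z^{DH}(g_{44}^{DH})$ read off Fig.~\ref{fig:f5} to conclude low viscosity. You additionally --- and correctly --- insist on verifying that $\{g_{43}^{DH},g_{44}^{DH},g_{45}^{DH}\}$ forms a genuine G-B-G triple, a hypothesis the paper leaves implicit but which is required for $g_{44}^{DH}$ to fall under Conjecture~\ref{con:8.1}; the paper itself offers no formal proof for this corollary, presenting it simply as a summary of the section's numerical observations.
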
   

It should be noted that a violation of the G-B-G property should be viewed as a more primal phenomena than a violation of RH, or the corrected Gram's law for that matter. It means that $Z^{DH}(t)$ lacks the regulatory property that $Z(t)$ is expected to have, of pushing to the sides bad Gram points, $g_n(r)$ as $r$ grows.

 \section{An In-Depth Investigation of The G-B-G Repulsion Property and Adjustments of $Z(g_{n \pm 1})$}\label{s:10}
 
 In Section \ref{s:8}, inspired by our discriminant analysis, we have experimentally discovered the G-B-G repulsion property according to which a bad Gram point $g_n$ with good consecutive neighbours $g_{n \pm 1}$ is expected to satisfy the viscosity bound $\mu(g_n)>4$. In this section, we undertake a formal exploration of the G-B-G property, aiming to uncover why the characteristics of \(g_n\) are anticipated to be interconnected with those of its consecutive neighbours. While a comprehensive proof of the G-B-G property is still beyond reach, subsequent sections will elucidate why we consider this property a significant stride towards a deeper understanding of the RH, and how it profoundly influences the properties of the zeros of the \(Z\)-function.

\subsection{The Cosine and Sine Adjustments of \( Z(g_{n \pm 1}) \)}
In order to explore why the values of \( Z'(g_n) \) and \( Z(g_n) \) are expected to be closely related to \( Z(g_{n-1}) \) and \( Z(g_{n+1}) \), we will turn to the classical Approximate Functional Equation (AFE), which is commonly used to compute the values of \( Z \) and \( Z' \) at Gram points. The AFE is represented by the following formulas:
\begin{equation}
\left\{
\begin{aligned}
    &Z(g_n) = 2 (-1)^n \sum_{k=1}^{N(n)} \frac{\cos(\ln(k) g_n)}{\sqrt{k}} + O\left( g_n^{-\frac{1}{4}} \right), \label{eq:Zgn} \\
    &Z'(g_n) = (-1)^n \sum_{k=1}^{N(n)} \ln\left(\frac{g_n}{2\pi k^2}\right) \frac{\sin(\ln(k) g_n)}{\sqrt{k}} + O\left( g_n^{-\frac{1}{4}} \right),
\end{aligned}
\right.
\end{equation}
where \( N(n) := \left [ \sqrt{\frac{g_n}{2\pi}} \right ] \) for any \( n \in \mathbb{Z} \). For a detailed derivation, see (5.2) and (6.3) in \cite{I}.

\begin{rem} In Remark \ref{rem:2.1}, we noted that we often compute $Z(t)$ using the approximation \eqref{eq:Z-function} with a sum of 
$\left [\frac{g_n}{2} \right]$ terms for the $A$-philosophy. However, in our current study focusing specifically on the values at 
$g_n$, the classical Approximate Functional Equation (AFE) with $N(n)$ terms is more suitable and will be employed.
\end{rem}

Let us introduce the following adjustments of $Z(g_{n \pm 1})$ to account for the influence of the adjacent Gram points \( g_{n-1} \) and \( g_{n+1} \) on $Z$ and $Z'$: 
\begin{dfn}[Cosine and Sine Adjustments of \(Z(g_{n \pm 1})\)] \label{def:10}
For any integer \( n \in \mathbb{Z}\), define:
\begin{enumerate}
    \item \( Z^{\pm}_c(g_n) = 2 (-1)^n \sum_{k=1}^{N(n)} \frac{\cos(\ln(k) g_{n \pm 1})}{\sqrt{k} \cos(\phi^n_k)} \),
    to which we refer as the cosine-adjustment of \( Z(g_{n \pm 1}) \).
    
    \item \( Z^{\pm}_s(g_n) = (-1)^n \sum_{k=1}^{N(n)} \ln\left(\frac{g_n}{2\pi k^2}\right) \frac{\cos(\ln(k) g_{n \pm 1})}{\sqrt{k} \sin(\phi_k^n)} \), to which we refer as the sine-adjustment of \( Z(g_{n \pm 1}) \).
\end{enumerate}
where for \(k = 1, \ldots, N(n)\) the adjustment-phase is given by 
\[
\phi_k^n := \ln(k)(g_n - g_{n-1}) = \ln(k) \frac{2\pi}{\ln \left( \frac{g_n}{2\pi} \right)}.
\] 
\end{dfn}

\begin{rem}[$\alpha$-adjustments of $Z(g_n)$] In general, for any sequence $\alpha = \alpha(n,k)$ let us define the $\alpha$-adjustments of $Z(g_n)$ with respect to $g_{n \pm 1}$ to be  
\[ Z^{\pm}(g_n ; \alpha ):= (-1)^n\sum_{k=1}^{N(n)} \alpha(n,k) \frac{\cos(\ln(k) g_{n \pm 1})}{\sqrt{k}}. \] 
The cosine and sine adjustments of \ref{def:10} correspond to the following two sequences
\[
\begin{array}{ccc} 
\alpha_c(n,k) = \frac{2}{\cos( \phi^n_k) } & ; & \alpha_s(n,k) = \ln \left ( \frac{g_n}{2\pi k^2} \right ) \frac{1}{\sin ( \phi_k^n) }. \end{array} \]

\end{rem}

The following result establishes the relationship between the values \(Z(g_n)\) and \(Z'(g_n)\) and the cosine and sine adjustments of $Z(g_{n \pm 1})$: 
\begin{prop} \label{prop:comb}
For any $n \in \mathbb{Z}$ the following holds: 
\begin{enumerate} 
    \item \( Z(g_n) = \frac{1}{2} \left ( Z_c^-(g_n)+Z_c^+(g_n) \right ) +O \left (g_n^{-\frac{1}{4}} \right )\).
    \item \( Z'(g_n) =  \frac{1}{2} \left ( Z_s^-(g_n)-Z_s^+(g_n) \right ) +O \left (g_n^{-\frac{1}{4}} \right )\).
\end{enumerate}
\end{prop}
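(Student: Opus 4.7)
The plan is to start from the classical AFE expressions \eqref{eq:Zgn} for $Z(g_n)$ and $Z'(g_n)$ and re-express the factors $\cos(\ln(k) g_n)$ and $\sin(\ln(k) g_n)$ in terms of $\cos(\ln(k) g_{n\pm 1})$ via the sum-to-product identities
\[
\cos A + \cos B = 2\cos\tfrac{A+B}{2}\cos\tfrac{A-B}{2}, \qquad \cos B - \cos A = 2\sin\tfrac{A+B}{2}\sin\tfrac{A-B}{2},
\]
applied with $A = \ln(k)\, g_{n+1}$ and $B = \ln(k)\, g_{n-1}$. Writing $h_m := g_m - g_{m-1}$ for the consecutive Gram spacings, one computes $\tfrac{A+B}{2} = \ln(k)\bigl(g_n + \tfrac{1}{2}(h_{n+1}-h_n)\bigr)$ and $\tfrac{A-B}{2} = \ln(k)\bigl(h_n + \tfrac{1}{2}(h_{n+1}-h_n)\bigr)$, which up to phase shifts proportional to $\ln(k)(h_{n+1}-h_n)$ equal $\ln(k)\, g_n$ and $\phi_k^n$, respectively.

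For (1), the first identity yields
\[
\cos(\ln(k) g_{n+1}) + \cos(\ln(k) g_{n-1}) = 2\cos(\ln(k) g_n)\cos(\phi_k^n) + E_k,
\]
where $E_k$ collects the two phase corrections. The Lambert-$W$ formula for $g_n$ from Section \ref{s:2} implies $h_{n+1} - h_n = O\bigl(g_n^{-1}\log^{-2}(g_n)\bigr)$, and since $\ln(k) \le \tfrac{1}{2}\log(g_n/2\pi)$ for $k \le N(n)$, the per-term phase shift is $O\bigl((g_n\log g_n)^{-1}\bigr)$. Dividing through by $\sqrt{k}\,\cos(\phi_k^n)$, summing the $N(n) = O(\sqrt{g_n})$ terms, and using $\sum_{k=1}^{N(n)} k^{-1/2} = O(g_n^{1/4})$, the aggregated error falls comfortably within the AFE tolerance of $O(g_n^{-1/4})$. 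Comparing with the AFE then yields (1). The proof of (2) is parallel, using the second identity and multiplying by $\ln(g_n/(2\pi k^2)) = O(\log g_n)$ before dividing by $\sin(\phi_k^n)$; the $k=1$ term is removed from both sides, since $\phi_1^n = 0$ forces $\sin(\ln(1)\, g_n) = 0$ in the AFE and both numerator and denominator to vanish in $Z^\pm_s(g_n)$.

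The main technical obstacle is uniform control of the denominators $\cos(\phi_k^n)$ and $\sin(\phi_k^n)$. As $k$ ranges from $1$ to $N(n)$, the phase $\phi_k^n = \ln(k)\cdot \tfrac{2\pi}{\log(g_n/2\pi)}$ sweeps the interval $(0,\pi]$, so $\cos(\phi_k^n)$ is near zero for $k$ with $\phi_k^n \approx \pi/2$, and $\sin(\phi_k^n)$ is near zero at both endpoints of the index range, threatening to amplify the $O((g_n \log g_n)^{-1})$ phase errors once one divides through. The resolution is not to bound the denominators away from zero uniformly, but rather to observe that each small-denominator factor is paired with a correspondingly small numerator: the ratios $\cos(\ln(k)\tfrac{A-B}{2})/\cos(\phi_k^n)$ and $\sin(\ln(k)\tfrac{A-B}{2})/\sin(\phi_k^n)$ stay bounded and tend to $1$ as $h_{n+1} \to h_n$. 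Propagating this ratio estimate through the sum — rather than estimating numerator and denominator separately — is the delicate step; once it is carried out, the per-term error is governed directly by $\ln(k)(h_{n+1}-h_n)$ and the proof closes.
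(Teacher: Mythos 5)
You take essentially the same route as the paper: the two ``identities'' displayed in the paper's one-line proof are exactly the sum-to-product identities you invoke with $A = \ln(k)g_{n+1}$ and $B = \ln(k)g_{n-1}$. You go one step further than the paper in recognizing that these identities are not exact (they hold as equalities only if $g_{n+1}-g_n = g_n - g_{n-1}$, whereas the Gram spacings $h_n = g_n - g_{n-1}$ vary slowly), and you try to quantify the resulting error. That instinct is right, and it surfaces something the paper's proof glosses over entirely.

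The weak spot is the claimed ``resolution'' of the small-denominator issue. Writing $\tfrac{A-B}{2} = \phi_k^n + \epsilon_k$ with $\epsilon_k = \tfrac{1}{2}\ln(k)(h_{n+1}-h_n)$, the ratio you want to control is
\[
\frac{\cos(\phi_k^n + \epsilon_k)}{\cos(\phi_k^n)} = \cos\epsilon_k - \tan(\phi_k^n)\sin\epsilon_k,
\]
and ``stays bounded and tends to $1$'' is not automatic: for the index $k_0$ where $\phi_k^n$ crosses $\pi/2$, the factor $|\tan\phi_{k_0}^n|$ is at least of order $\sqrt{N(n)}\log g_n$ and in principle can be far larger if $(g_n/2\pi)^{1/4}$ happens to lie close to an integer. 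The ratio stays near $1$ only when $|\epsilon_{k_0}| \ll |\cos\phi_{k_0}^n|$, a quantitative comparison (between $h_{n+1}-h_n = O(g_n^{-1}\log^{-2}g_n)$ and $\min_k|\cos\phi_k^n|$, generically of order $g_n^{-1/4}\log^{-1}g_n$) that your proposal asserts rather than establishes, and the analogous issue for $\sin(\phi_k^n)$ near $k = N(n)$ is also not ruled out. To be fair, the paper's own proof carries the same unaddressed gap, since it simply writes the identities as exact; so you are not behind the paper here, but as written the ``small numerator paired with small denominator'' step is an assumption, not an argument. One minor slip: the $k=1$ term of $Z^\pm_s(g_n)$ is $1/0$, not $0/0$ (the numerator $\cos(\ln(1)g_{n\pm1})=1$ does not vanish), though your remedy of dropping $k=1$ from the sum is still the right one.
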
 
\begin{proof} The result follows from the identities 
\[ 
\begin{array}{ccc}
\cos ( \ln(k)g_n)  = \frac{\cos(\ln(k)g_{n-1})+\cos (\ln(k)g_{n+1})}{2 \cos(\phi_k^n)} & ; & 
\sin ( \ln(k)g_n)  = \frac{\cos(\ln(k)g_{n-1})-\cos (\ln(k)g_{n+1})}{2 \sin(\phi_k^n)} \end{array}
\] 
\end{proof} 
We are thus interested in the relation between $Z(g_{n \pm 1})$ and the adjustments $Z_c^{\pm}(g_n)$ and $Z_s^{\pm}(g_n)$. Let us investigate the properties of the phase $\phi^n_k$ and the $\alpha$-functions. Figure \ref{fig:f9} shows the graphs of $\alpha_c(n,k)$ and $\alpha_s(n,k)$ for $n=9807962$ and $k=1,...,N(n)$:

\begin{figure}[ht!]
	\centering
		\includegraphics[scale=0.45]{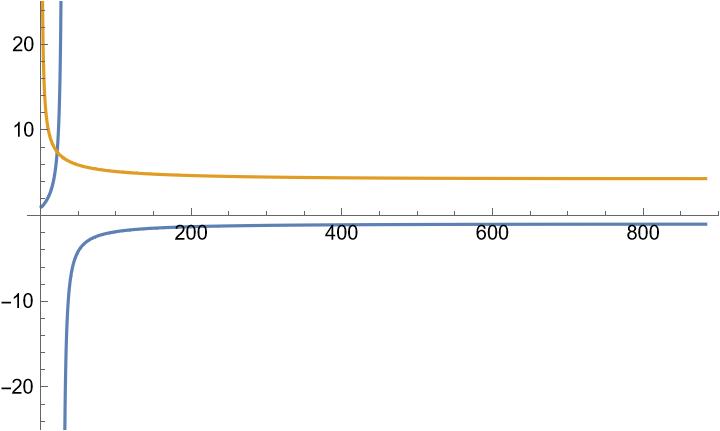} 	
		\caption{\small{$\alpha_c(n,k)$ (blue) and $\alpha_s(n,k)$ (brown) for $n=9807962$ and $k=1,...,N(n)$.}}
\label{fig:f9}
	\end{figure}
Figure \ref{fig:f10} shows the adjustment-phase $\phi_k^n$ for $n=9807962$ and $k=1,...,N(n)$:
\bigskip
\begin{figure}[ht!]
	\centering
		\includegraphics[scale=0.4]{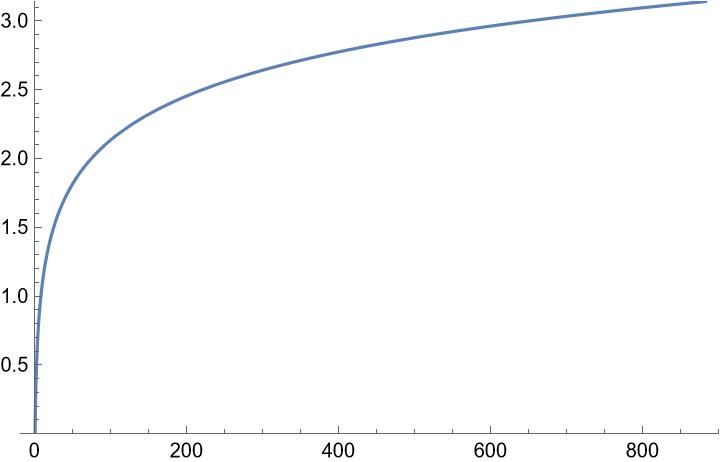} 	
		\caption{\small{The adjustment-phase $\phi_k^n$ for $n=9807962$ and $k=1,...,N(n)$.}}
\label{fig:f10}
	\end{figure}
	
The following describes the general properties of \( \alpha_c(n,k),\alpha_s(n,k) \) and \( \phi^n_k \) as continuous function of the variable $k$:

\begin{lemma}
\label{lem:10}
For any integer \( n \in \mathbb{Z} \), the functions \( \alpha_c(n, k) \) and \( \alpha_s(n, k) \) exhibit the following behaviors:
\begin{enumerate}
    \item The function \( \alpha_c(n, k) \) starts at \( \alpha_c(n, 0) = 2(-1)^n \) and increases monotonically on the interval \( 0 \leq k \leq \sqrt{N(n)} \). Furthermore, it satisfies 
    \[
    \lim_{{k \rightarrow \sqrt{N(n)}}^{\pm}} \alpha_c(n, k) = \mp \infty
    \]
    before rising to \( \alpha_c(n, N(n)) = -1 \).
    
    \item The function \( \alpha_s(n, k) \) starts at \( \alpha_s(n, 0) = + \infty \) and decreases monotonically until it reaches \( \alpha_s(n, N(n)) = \frac{1}{\pi} \ln \left( \frac{g_n}{2\pi} \right) \).
   \item The adjustment-phase \( \phi^n_k \) begins at \( \phi^n_1 = 0 \), rises to \( \phi^n_{N(n)} = \pi \), and attains a value of \( \phi^n_{\sqrt{N(n)}} = \frac{\pi}{2} \).
\end{enumerate}
\end{lemma}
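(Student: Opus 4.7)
The plan is to reduce everything to the elementary behaviour of the phase $\phi^n_k$ as a function of $k$ and then propagate that through the trigonometric compositions defining $\alpha_c$ and $\alpha_s$. Writing $L := \ln(g_n/(2\pi))$, one has $\phi^n_k = (2\pi/L)\ln(k)$, which is smooth and strictly increasing in $k$ on $(0,\infty)$. Direct substitution then confirms $\phi^n_1 = 0$, $\phi^n_{\sqrt{N(n)}} = \pi/2$, and $\phi^n_{N(n)} = \pi$, using that $\ln N(n) = L/2$ since $N(n) = \sqrt{g_n/(2\pi)}$. This establishes part $(3)$ and shows that as $k$ traverses $[1, N(n)]$, the phase traverses $[0, \pi]$ bijectively, crossing $\pi/2$ exactly at $k = \sqrt{N(n)}$.

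For $\alpha_c(n, k) = 2/\cos(\phi^n_k)$, the endpoint values and singular behaviour are immediate from the sign of $\cos$: it is positive on $[0, \pi/2)$, vanishes at $\pi/2$, and is negative on $(\pi/2, \pi]$. This yields the stated one-sided limits $\alpha_c \to \mp\infty$ as $k \to \sqrt{N(n)}^{\pm}$ and determines the endpoint values from $\cos(0)$ and $\cos(\pi)$. Monotonicity follows from the chain rule: $d\alpha_c/dk = (2\sin \phi^n_k / \cos^2 \phi^n_k) \cdot d\phi^n_k/dk$, and since $d\phi^n_k/dk = 2\pi/(Lk) > 0$ and $\sin \phi^n_k > 0$ on $(0, \pi)$, the derivative is positive throughout $(1, \sqrt{N(n)})$, giving the claimed monotonically increasing behaviour on the interval before the pole.

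For $\alpha_s(n, k) = \ln(g_n/(2\pi k^2))/\sin(\phi^n_k)$, the divergence $\alpha_s \to +\infty$ as $k \to 1^+$ is clear because the numerator approaches $L > 0$ while the denominator vanishes from above. The finite value at $k = N(n)$ is an indeterminate form $0/0$, which I would resolve by Taylor expansion: setting $u := \ln k - L/2$ near $k = N(n)$, the numerator becomes $-2u$ while $\sin \phi^n_k = \sin(\pi + 2\pi u/L) \sim -2\pi u/L$, yielding the quotient limit $L/\pi = (1/\pi)\ln(g_n/(2\pi))$, as asserted. For the monotonicity I would pass to the variable $\phi = \phi^n_k \in (0, \pi)$ and rewrite $\alpha_s = (L/\pi) \cdot (\pi - \phi)/\sin \phi$; differentiation in $\phi$ then reduces the problem to the auxiliary inequality $f(\phi) := \sin \phi + (\pi - \phi)\cos \phi > 0$ on $[0, \pi)$, which follows from $f(\pi) = 0$ together with $f'(\phi) = -(\pi - \phi)\sin \phi \leq 0$ on $[0, \pi]$.

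The main technical point is precisely this monotonicity of $\alpha_s$, where naive sign analysis fails near the right endpoint because both the numerator and $\sin \phi^n_k$ tend to zero there. The substitution $k \mapsto \phi$ circumvents this by converting the question into the elementary inequality on $f$. All remaining assertions in the lemma are then direct corollaries of the monotonicity of the phase together with continuity and sign properties of $\cos$, $\sin$, and $\ln$, so the proof concludes by assembling these pieces.
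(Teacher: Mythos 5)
Your proof is correct and follows the same structure as the paper's: establish part~$(3)$ on the phase $\phi^n_k$ first, then derive $(1)$ and $(2)$ from the sign and monotonicity behaviour of $\cos$ and $\sin$ composed with $\phi^n_k$. The paper, however, dispatches the whole lemma in a single line (``follows from the definition of the phase via direct computation, the rest is immediate''), while you actually carry out the argument — and the monotonicity of $\alpha_s$ is genuinely \emph{not} immediate, since at $k = N(n)$ both the numerator $\ln\bigl(g_n/(2\pi k^2)\bigr)$ and $\sin\phi^n_k$ vanish. Your substitution $\alpha_s = (L/\pi)(\pi - \phi)/\sin\phi$ and the reduction to $f(\phi) = \sin\phi + (\pi - \phi)\cos\phi > 0$ on $[0,\pi)$, with $f(\pi) = 0$ and $f'(\phi) = -(\pi - \phi)\sin\phi \leq 0$, is exactly the missing ingredient that the paper glosses over; this is the real content of the lemma and you supply it cleanly.

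One thing worth flagging: your computation gives the endpoint values $\alpha_c(n,1) = 2/\cos(0) = 2$ and $\alpha_c(n, N(n)) = 2/\cos(\pi) = -2$, whereas the lemma statement reads $\alpha_c(n,0) = 2(-1)^n$ and $\alpha_c(n,N(n)) = -1$. The former is an indexing slip ($k$ should start at $1$, where $\phi^n_1 = 0$), and the $(-1)^n$ and $-1$ do not follow from the stated definition $\alpha_c = 2/\cos(\phi^n_k)$; your values are the ones consistent with the definitions given. You may also want to note explicitly that the lemma (and your proof) treats $N(n)$ as $\sqrt{g_n/(2\pi)}$ rather than its integer part $\bigl[\sqrt{g_n/(2\pi)}\bigr]$, which is needed for the exact equality $\phi^n_{N(n)} = \pi$; this is an approximation the paper makes silently.
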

\begin{proof} 
\( (1),(2) \) follow from \( (3)  \), which in turn follows from the definition of the phase via direct computation, the rest is immediate. 
\end{proof} 
\setcounter{subsection}{1}

\subsection{Approximating Adjustments via the Localized Sub-Sums of $Z(g_n)$} By definition, the adjustments modify the terms of the sums \(Z(g_{n \pm 1})\) through the \(\alpha(n,k)\)-function. While the classical sums \(Z(g_{n \pm 1})\) and their adjusted variants behave differently on a global scale, we observe that they become approximately proportional when summed over a restricted localized set of indices. This observation leads us to define:

\begin{dfn}[Localized Sums]
Let \(1 \leq a < b \leq N(n)\). The localized sum of \(Z(g_n)\) within the interval \([a, b]\) is given by:
\[
Z(g_n ; a, b) := 2 (-1)^n \sum_{k=a}^{b} \frac{\cos(\ln(k) g_n)}{\sqrt{k}}.
\]
Similarly, we introduce the localized sums for \(Z'(g_n)\) and \(Z^{\pm}_c(g_n), Z^{\pm}_s(g_n)\).
\end{dfn}

To quantify the average effect of the \(\alpha\)-function within an interval, we define:
\[
\alpha^{avg}(n ; a, b) := \frac{1}{b-a} \int_a^b \alpha(n, k) \, dk.
\]
As shown in Lemma \ref{lem:10} and depicted in Fig.~\ref{fig:f9}, we observe that both \( \alpha_c(n, k) \) and \( \alpha_s(n, k) \) approach constant values for sufficiently large \( k \) within the interval \( [1, N(n)] \). Consequently, their corresponding averages over an interval \( [a, N(n)] \) also approach these constant values, as $a$ grows. Specifically, we have:
\[
\label{eq:propor} 
\begin{array}{ccc}
\alpha_c^{avg}(n ; a, N(n)) \approx -1 & ; &
\alpha_s^{avg}(n ; a, N(n)) \approx \frac{1}{\pi} \ln \left ( \frac{g_n}{2 \pi} \right ),
\end{array}
\]
for relatively large \(1<a\), and the quality of the approximation improves as \(a\) increases. For instance, consider Fig.~\ref{fig:f11}, which illustrates the following graphs for \( n=9807962 \) and \( k = 1, \ldots, N(n) \):
\begin{itemize}
    \item[(a)] \( Z^-_c(g_n; k, N(n)) \) (blue), and \( -Z(g_{n-1} ; k, N(n)) \) (brown).
    \item[(b)] \( Z^-_s(g_n; k, N(n)) \) (blue), and \( \frac{1}{\pi} \ln \left ( \frac{g_n}{2 \pi} \right ) Z(g_{n-1} ; k, N(n)) \) (brown).
\end{itemize}
\begin{figure}[ht!]
	\centering
		\includegraphics[scale=0.4]{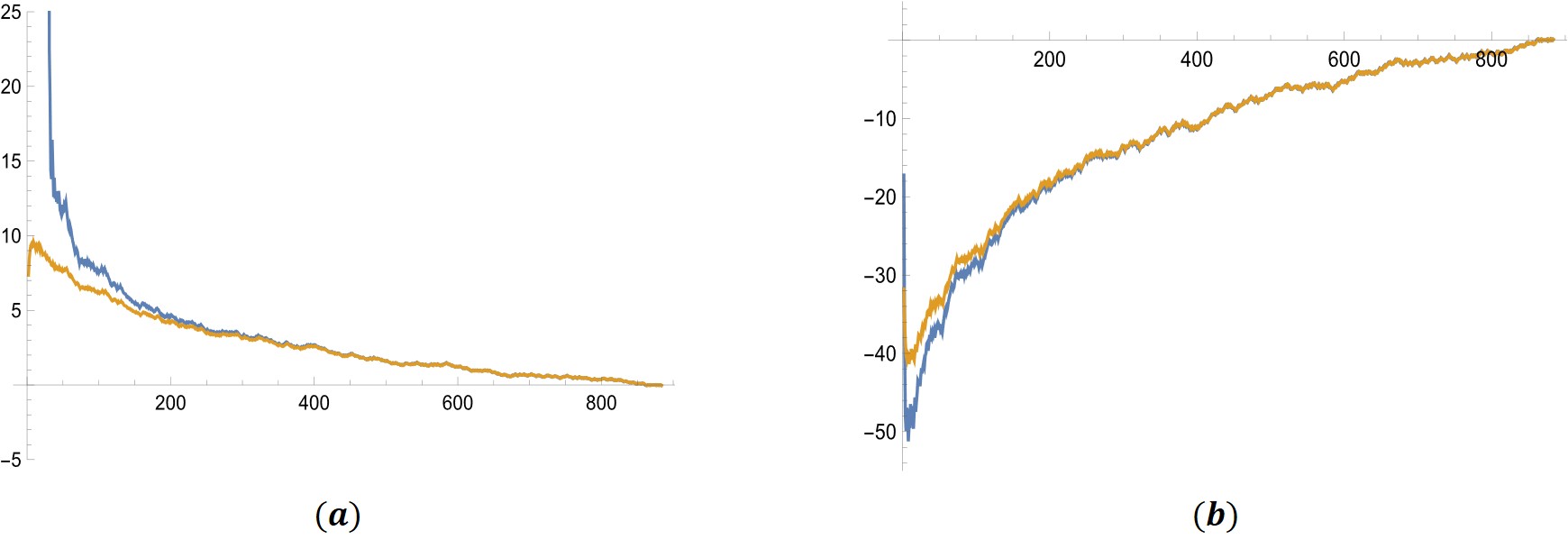} 	
		\caption{{\small Graphs of (a) \(Z^-_c(g_n; k, N(n))\) (blue) and \(-Z(g_{n-1} ; k, N(n))\) (brown).} {\small (b) \(Z^-_s(g_n; k, N(n))\) (blue) and \( \frac{1}{\pi} \ln \left ( \frac{g_n}{2 \pi} \right ) Z(g_{n-1} ; k, N(n))\) (brown).} For \(n=9807962\) and \(k=1,\ldots,N(n)\).}
\label{fig:f11}
	\end{figure}
As expected, Fig.~\ref{fig:f11} shows that the localized sums \( Z(g_{n \pm 1}) \) and their adjustments \( Z^{\pm}_c(g_n) \) and \( Z^{\pm}_s(g_n) \) are approximately proportional to each other, via the constants of ~\ref{eq:propor}. This proportionality holds within intervals of the form \([a, N(n)]\), with \( a \) is significantly larger than $1$ yet still much smaller than \( N(n) \). However, when the value of \( a \) is relatively small, the localized sums \( Z(g_{n \pm 1}) \) start to deviate from their adjustments \( Z^{\pm}_c(g_n) \) and \( Z^{\pm}_s(g_n) \), and the proportionality no longer holds. Nonetheless, for any given interval \([a, b]\), the following approximations can be considered:
\begin{equation}
\begin{aligned}
Z^{\pm}_c(g_n ; a, b) &\approx \alpha_c^{avg}(n ; a, b) \cdot Z(g_{n \pm 1} ; a, b), \\
Z^{\pm}_s(g_n ; a, b) &\approx \alpha_s^{avg}(n ; a, b) \cdot Z(g_{n \pm 1} ; a, b),
\end{aligned}
\end{equation}
The accuracy of these approximations increases as the size of the interval decreases. In practice, we find that these approximations remain relatively accurate even for intervals \( [a, b] \) close to the origin and larger than initially expected. We suspect this resilience is due to the fluctuations of the cosine terms within the sums. Consequently, the adjustments \( Z^{\pm}_c(g_n) \) and \( Z^{\pm}_s(g_n) \) can be expressed as composites of localized sums \( Z(g_{n \pm 1}) \), each scaled by its corresponding constant.

In particular, in order to approximate \(Z^{\pm}_c(g_n)\) and \(Z^{\pm}_s(g_n)\) in terms of localized sums of $Z(g_{n \pm 1})$, one can partition \([1, N(n)]\) into \(1 = a_1 < a_2 < \ldots < a_r = N(n)\) and consider:
\begin{equation}
\label{eq:adjust-approx} 
\begin{aligned}
Z^{\pm}_c(g_n) &\approx \sum_{k=1}^{r-1} \alpha_c^{\text{avg}}(n ; a_k, a_{k+1}) \cdot Z(g_{n \pm 1} ; a_k, a_{k+1}), \\
Z^{\pm}_s(g_n) &\approx \sum_{k=1}^{r-1} \alpha_s^{\text{avg}}(n ; a_k , a_{k+1}) \cdot Z(g_{n \pm 1} ; a_k, a_{k+1} ),
\end{aligned}
\end{equation}
which would give good approximations assuming the partition is chosen to be refined enough. Figure~\ref{fig:f12} provides a schematic representation of the relationships among \(Z(g_n)\), \(Z'(g_n)\), \(Z(g_{n \pm 1})\), and their adjustment terms \(Z^{\pm}_c(g_n)\) and \(Z^{\pm}_s(g_n)\) so-far discussed in this section.
\begin{figure}[h!]
\centering
\begin{tikzpicture}[>=Stealth]
    \tikzstyle{mycircle} = [draw=gray, circle, fill=gray!20, minimum size=1.5cm, inner sep=0pt]
    \tikzstyle{mysquare} = [draw=blue, rectangle, fill=blue!20, minimum size=1cm, inner sep=0pt]

    \node[mycircle] (W) at (0,0) {\(Z(g_{n-1})\)};
    \node[mycircle] (N) at (2.5,2.5) {\(Z(g_n)\)};
    \node[mycircle] (E) at (5,0) {\(Z(g_{n+1})\)};
    \node[mycircle] (S) at (2.5,-2.5) {\(Z'(g_n)\)};

    \node[mysquare] (NW) at (0,2.5) {\(Z^-_c(g_n)\)};
    \node[mysquare] (NE) at (5,2.5) {\(Z^+_c(g_n)\)};
    \node[mysquare] (SW) at (0,-2.5) {\(Z^-_s(g_n)\)};
    \node[mysquare] (SE) at (5,-2.5) {\(Z^+_s(g_n)\)};

    \draw[->] (W) -- (N);
    \draw[->] (W) -- (S);
    \draw[->] (E) -- (N);
    \draw[->] (E) -- (S);

    \draw[->] (NW) -- node[above] {\(+\)} (N);
    \draw[->] (NE) -- node[above] {\(+\)} (N);
    \draw[->] (SW) -- node[above] {\(+\)} (S);
    \draw[->] (SE) -- node[above] {\(-\)} (S);

    \draw[->] (W) -- node[left] {\(\alpha_c\)} (NW);
    \draw[->] (W) -- node[left] {\(\alpha_s\)} (SW);
    \draw[->] (E) -- node[right] {\(\alpha_c\)} (NE);
    \draw[->] (E) -- node[right] {\(\alpha_s\)} (SE);
\end{tikzpicture}
\caption{\small{Schematic representation of the relations between $Z(g_n),Z'(g_n), Z(g_{n \pm 1})$ and the adjustments $Z^{\pm}_c(g_n)$ and 
$Z^{\pm}_s(g_n)$.}} 
\label{fig:f12}
\end{figure}
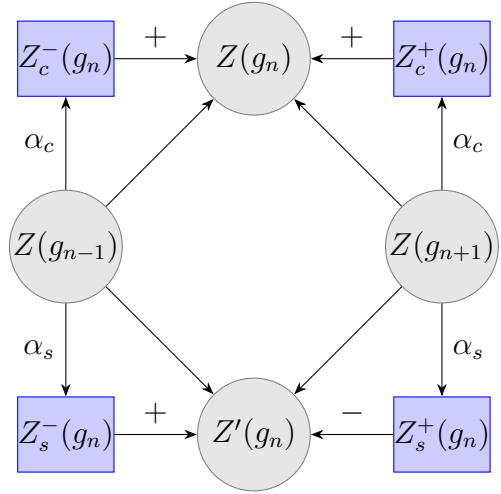

The perpendicular arrows represent the relationships established in Proposition~\ref{prop:comb}, which express \( Z(g_n) \) and \( Z'(g_n) \) as combinations of the adjustments \( Z^{\pm}_c(g_n) \) and \( Z^{\pm}_s(g_n) \). On the other hand, the vertical arrows depict the approximations given by ~\eqref{eq:adjust-approx}, enabling us to approximate the adjustments \( Z^{\pm}_c(g_n) \) and \( Z^{\pm}_s(g_n) \) through weighted partitions of \( Z(g_{n \pm 1}) \). Together, these results shed light on the expected intimate relationship between $Z(g_n)$, $Z'(g_n)$ and $Z(g_{n \pm 1})$. However, we have still not discussed the central question: why the consecutive Gram neighbours $g_{n \pm 1}$ being good or bad should have an effect on the proportion between $Z(g_n)$ and $Z'(g_n)$? 

\subsection{Further Numerical Study of the Repulsion Property and its Violations} In this sub-section, we seek to numerically investigate why the \emph{good} or \emph{bad} nature of the Gram neighbours \( g_{n \pm 1} \) is expected to have an effect on the proportion between \( Z(g_n) \) and \( Z'(g_n) \). Figure \ref{fig:f14} presents the partial sums \( Z(g_n ; 1,k) \) and \( Z'(g_n ; 1, k) \) as \( k \) ranges from 1 to \( N(n) \) for (a) the G-B-G bad Gram point at \( n=730119 \) and (b) the non-G-B-G bad Gram point at \( n=9807962 \):

\begin{figure}[ht!]
	\centering
		\includegraphics[scale=0.4]{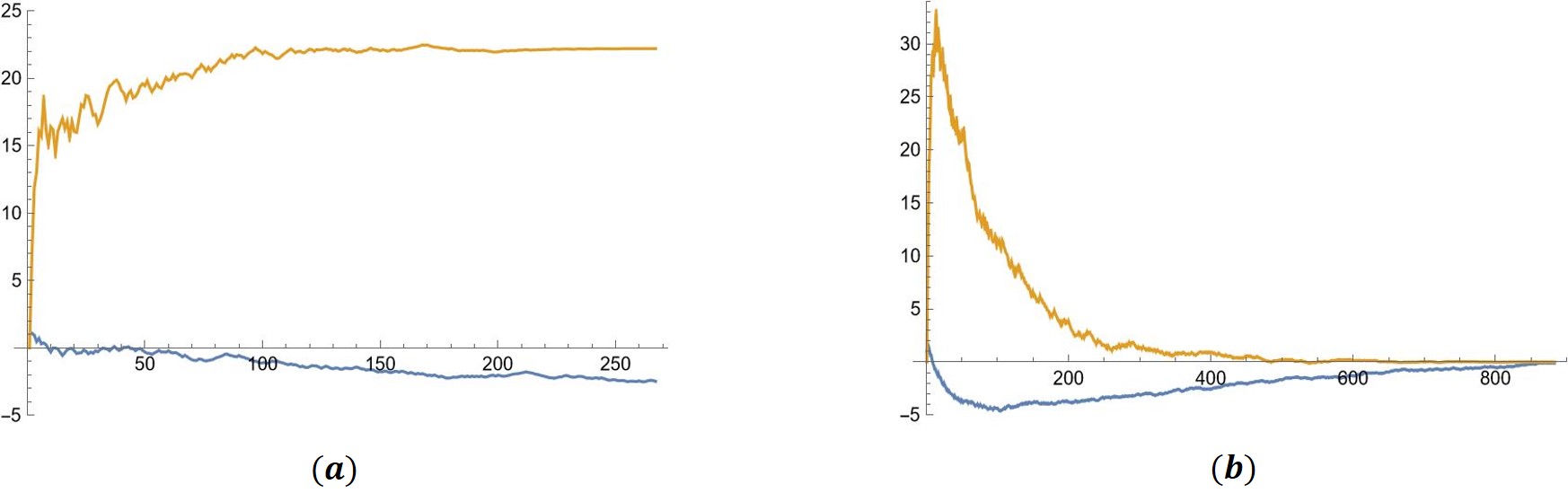} 	
		\caption{\small{The partial sums \( Z(g_n ; 1,k) \) and \( Z'(g_n ; 1, k) \) as \( k \) ranges from 1 to \( N(n) \) for (a) the G-B-G bad Gram point at \( n=730119 \) and (b) the non-G-B-G bad Gram point at \( n=9807962 \).}}
\label{fig:f14}
	\end{figure} Upon examining the partial sums of \( Z'(g_n) \), we identify three distinct stages across the index range \( k \) for both cases:

\begin{enumerate}
    \item[(a)] \emph{Initial surge:} Within the first \( \left\lceil  \sqrt[4]{\frac{g_n}{2 \pi}} \right\rceil \) terms, both cases exhibit a significant upward spike. This is attributed to the consistent sign of \( \cos(\ln(k)g_n) \) in this segment. Coupled with the relatively large values of \( \alpha_s(n,k) \) described by Lemma \ref{lem:10}, this creates a pronounced surge.

    \item[(b)] \emph{Middle fluctuation:} During this phase, the G-B-G point \( n=730119 \) displays a moderate upward shift following the initial impact. On the other hand, the non-G-B-G point \( n=9807962 \) experiences a substantial downward pull, nearly offsetting the initial surge.

    \item[(c)] \emph{Final stability:} As we progress to the latter part of the range, both sums stabilize and show minimal variation. As \( k \) nears \( N(n) \), the partial sums converge to \( Z'(g_n) \).
\end{enumerate}

Let us now explain a few of the observed properties, by utilizing our adjustments $Z^{\pm}_s(g_n)$. We have: 
\begin{lem} \label{lem:term1} The $k$-th term of the sum of $Z'(g_n)$ is given by 
$$ (-1)^n \frac{\alpha_s(n,k)}{\sqrt{k}} \left [ \cos ( \ln(k) g_{n-1} ) -\cos ( \ln(k) g_{n-1}+2\phi^n_k ) \right ],$$ where $\phi_k^n$ is the adjustment-phase. 
\end{lem}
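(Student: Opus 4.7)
The plan is to read the $k$-th term of $Z'(g_n)$ directly off the AFE written at the start of Section \ref{s:10}, namely
\[
(-1)^n \ln\!\left(\frac{g_n}{2\pi k^2}\right)\frac{\sin(\ln(k)\,g_n)}{\sqrt{k}},
\]
and then rewrite the single sine factor as the difference of cosines involving $g_{n-1}$ and $g_{n-1}+2\phi_k^n/\ln(k)$ by a trigonometric identity. So the only content of the lemma is to convert $\sin(\ln(k)g_n)$ into the bracketed expression, after which the coefficient $\ln(g_n/(2\pi k^2))/\sqrt{k}$ regroups into $\alpha_s(n,k)/\sqrt{k}$ exactly because $\alpha_s(n,k)=\ln(g_n/(2\pi k^2))/\sin(\phi_k^n)$.

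The first step is to invoke the definition $\phi_k^n := \ln(k)(g_n-g_{n-1})$, which gives $\ln(k)\,g_n = \ln(k)\,g_{n-1}+\phi_k^n$ and hence $\sin(\ln(k)g_n)=\sin(\ln(k)g_{n-1}+\phi_k^n)$. The second step is the sum-to-product identity
\[
\cos A-\cos B = -2\sin\!\left(\tfrac{A+B}{2}\right)\sin\!\left(\tfrac{A-B}{2}\right),
\]
applied with $A=\ln(k)g_{n-1}$ and $B=\ln(k)g_{n-1}+2\phi_k^n$. This yields
\[
\cos(\ln(k)g_{n-1})-\cos(\ln(k)g_{n-1}+2\phi_k^n) = 2\sin(\ln(k)g_{n-1}+\phi_k^n)\sin(\phi_k^n),
\]
so $\sin(\ln(k)g_n)$ is expressed in terms of the bracketed difference divided by $\sin(\phi_k^n)$, at which point substituting into the AFE term and collecting the factors reproduces the formula claimed.

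There is no analytic difficulty: the lemma is essentially a rewriting identity that repackages the $k$-th AFE term for $Z'(g_n)$ into a form symmetric with the adjustments $Z_s^{\pm}(g_n)$ of Definition \ref{def:10}. The main point to be careful about is the bookkeeping of factors of $2$ between the sum-to-product identity and the normalizing $\sin(\phi_k^n)$ appearing in $\alpha_s(n,k)$; in particular one should verify consistency with Proposition \ref{prop:comb}(2), where $Z'(g_n)$ is expressed as $\tfrac{1}{2}(Z_s^-(g_n)-Z_s^+(g_n))$, so that summing the termwise identity over $k$ recovers precisely that relation modulo the usual $O(g_n^{-1/4})$ error. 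Once that accounting is done, the lemma follows with no further input beyond the AFE and a single application of the cosine-difference identity.
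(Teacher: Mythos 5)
Your approach is essentially the paper's: both hinge on the definition $\phi_k^n = \ln(k)(g_n - g_{n-1})$ and the sum-to-product cosine identity (the paper simply routes it through Proposition \ref{prop:comb}(2), which packages exactly the identity you invoke directly). You were right to flag the factor-of-$2$ bookkeeping as the point needing care, and in fact it does not fully close: substituting
\[
\sin(\ln(k)g_n) = \frac{1}{2\sin(\phi_k^n)}\bigl[\cos(\ln(k)g_{n-1}) - \cos(\ln(k)g_{n-1} + 2\phi_k^n)\bigr]
\]
into the AFE summand $(-1)^n \ln\!\left(\tfrac{g_n}{2\pi k^2}\right)\tfrac{\sin(\ln(k)g_n)}{\sqrt{k}}$ gives
\[
(-1)^n\,\frac{\alpha_s(n,k)}{2\sqrt{k}}\bigl[\cos(\ln(k)g_{n-1}) - \cos(\ln(k)g_{n-1}+2\phi_k^n)\bigr],
\]
which is \emph{half} the expression claimed in the lemma. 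The paper's own displayed equation \eqref{eq:term} has the same missing $\tfrac12$; the stated constant is correct only if ``the $k$-th term of the sum of $Z'(g_n)$'' is read as the $k$-th summand of $Z_s^-(g_n)-Z_s^+(g_n)$ \emph{without} the outer $\tfrac12$ from Proposition \ref{prop:comb}(2). So your derivation is sound and coincides with the paper's, but it produces a normalization that differs from the lemma's displayed formula by a factor of $2$, and you should either carry that $\tfrac12$ explicitly or note the nonstandard sense of ``term'' that the lemma implicitly uses; you should also note, as you correctly did when writing $g_{n-1}+2\phi_k^n/\ln(k)$, that the identity $\ln(k)g_{n+1}=\ln(k)g_{n-1}+2\phi_k^n$ uses the asymptotic equidistribution of Gram points (whereas the paper's ``$g_{n+1}=g_{n-1}+2\phi_k^n$'' is a typo).
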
 

\begin{proof} Recall that by Proposition \ref{prop:comb} we have 
\[ Z'(g_n) =  \frac{1}{2} \left ( Z_s^-(g_n)-Z_s^+(g_n) \right ) \]
Hence, each individual term of $Z'(g_n)$ can be expressed as 
\begin{equation}
\label{eq:term}
 (-1)^n \frac{\ln \left ( \frac{g_n}{ 2\pi k^2} \right )}{\sqrt{k} sin ( \phi^n_k) } \left [ \cos ( \ln(k) g_{n-1} ) -\cos ( \ln(k) g_{n+1} )  \right ], \end{equation}
 and by definition $g_{n+1} = g_{n-1} + 2 \phi^n_k$. 
 \end{proof} 
 \hspace{-0.6cm} From the above, we can thus derive the following insights regarding the observed behaviour:
 \begin{enumerate}
 \item \emph{Final stability} By Lemma \ref{lem:10} we know that $\phi^n_k \approx \pi$ for $k$ close to the end of the range $N(n)$. Hence by Lemma \ref{lem:term1} for such indices the terms in this range satisfy:
 \begin{align}
  (-1)^n \frac{\alpha_s(n,k)}{\sqrt{k}}  \left [ \cos ( \ln(k) g_{n-1} ) -\cos ( \ln(k) g_{n-1}+2\phi^n_k ) \right ] \approx \\ \approx 
   (-1)^n \frac{\alpha_s(n,k)}{\sqrt{k}}  \left [ \cos ( \ln(k) g_{n-1} ) -\cos ( \ln(k) g_{n-1}+2\pi ) \right ] = \\ 
  =  (-1)^n \frac{\alpha_s(n,k)}{\sqrt{k}}  \left [ \cos ( \ln(k) g_{n-1} ) -\cos ( \ln(k) g_{n-1} ) \right ] =0, \end{align} which elucidates why the contribution of the terms becomes negligible as \(k\) approaches \(N(n)\).

\item \emph{Middle Fluctuations:} Lemma \ref{lem:term1} shows that the sign of the $k$-th term is fundamentally determined by the difference between $\cos(\ln(k) g_{n-1})$ and $\cos(\ln(k) g_{n+1})$. Consider, for instance, our specific case of $n=9807962$, which is a non-G-B-G point. In particular, in this case the left neighbour $g_{n-1}$ is also a bad Gram point. By definition, a Gram point is bad if 
\( \sum_{k=1}^{N(n)} \frac{\cos (\ln(k) g_n) }{\sqrt{k} } <0 \). This entails that the values of $\frac{\cos(\ln(k) t)}{\sqrt{k}}$ for $t= g_n$ and $t= g_{n-1}$ possess a significant tendency for negativity. We'll delve deeper into this notion shortly, in sub-Section \ref{sub:sim}.  In our scenario, we observe that many of the negative values for $g_{n-1}$ turn to cluster in the middle region, pulling the partial sums back towards the axis. This becomes especially evident within the range:
\begin{equation}
\label{eq:range}
 \left [ \frac{1}{2}  \sqrt[4]{\frac{g_n}{2 \pi}} \right ] \leq k \leq   \left [ 2 \sqrt[4]{\frac{g_n}{2 \pi}} \right ], 
\end{equation} This specific range is highlighted in Figure \ref{fig:f15}:
\begin{figure}[ht!]
	\centering
		\includegraphics[scale=0.45]{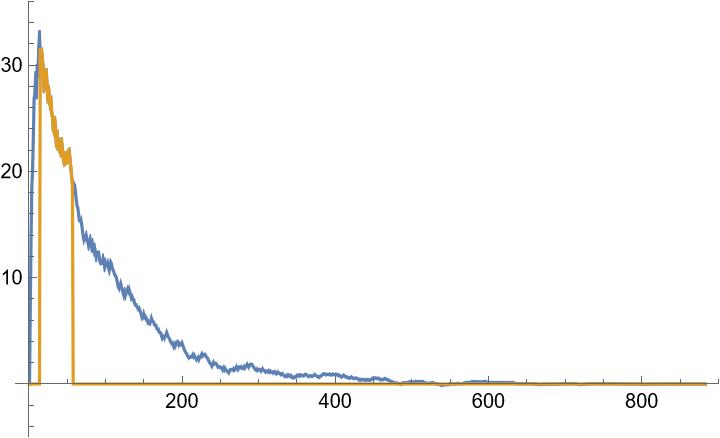} 	
		\caption{\small{The partial sums \( Z'(g_n ; 1, k) \) as \( k \) varies from 1 to \( N(n) \) for \( n=9807962 \). The region defined by \eqref{eq:range} is accentuated in orange.}}
\label{fig:f15}
\end{figure}

In this range, the adjustment-phase is given approximately by $2 \phi^n_k \approx \pi$ according to Lemma \ref{lem:10}. Consequently, within this range, the terms of $Z'(g_n)$ are approximately given by:
\begin{align}
  (-1)^n \frac{\alpha_s(n,k)}{\sqrt{k}} \left [ \cos ( \ln(k) g_{n-1} ) - \cos ( \ln(k) g_{n-1}+2\phi^n_k ) \right ] &\approx \\
  (-1)^n  \frac{\alpha_s(n,k)}{\sqrt{k}}  \left [ \cos ( \ln(k) g_{n-1} ) - \cos ( \ln(k) g_{n-1}+\pi ) \right ] &= \\
  2(-1)^n  \frac{\alpha_s(n,k)}{\sqrt{k}}  \cos ( \ln(k) g_{n-1} ), 
\end{align}
according to Lemma \ref{lem:term1}. This expected behaviour is indeed illustrated in the following Fig. \ref{fig:f15.1}:
\begin{figure}[ht!]
	\centering
		\includegraphics[scale=0.45]{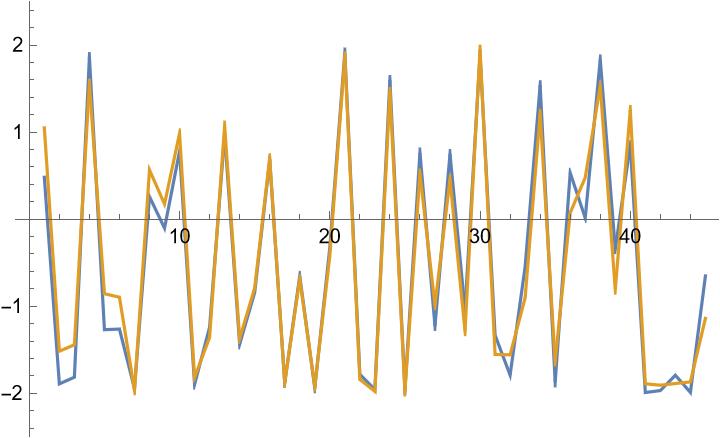} 	
		\caption{\small{$2 cos(ln(k) g_{n-1})$ (blue) and  $ cos(ln(k) g_{n-1})-cos(ln(k) g_{n+1})$ (orange) in the region defined by \eqref{eq:range}.}}
\label{fig:f15.1}
\end{figure}

The values of $2 cos(ln(k) g_{n-1})$ are seen to be predominantly negative which causes the observed decrease in the values of the partial sums of $Z'(g_n)$ in this region. 
\item  \emph{Initial range:} The main feature of the repulsion conjecture necessitates that in an initial surge exists then the moderate behaviour in the middle range must follow. That is, that there exists a strong correlation between the values of $cos(ln(k) g_n)$ in the range up to $\sqrt{N(n)}$ and those from $\sqrt{N(n)}$ to $N(n)$.  We have yet to pinpoint this correlation in the context of \( Z(t) \), but it is worth noting that for individual terms, such a correlation is inherently evident due to basic trigonometric identities. For instance, the double-angle formula for cosine 
\[
\cos(\ln(k^2) g_n) = \cos(2\ln(k) g_n) = 2\cos^2(\ln(k) g_n) - 1 
\]
illuminates that terms from \( [1, \sqrt{N(n)}] \) naturally correlate with those in the range \( [\sqrt{N(n)}, N(n)] \). However, this direct correlation via the double-angle identity maps terms from \( [1, \sqrt{N(n)}] \) to a sparse subset of \( [\sqrt{N(n)}, N(n)] \) through \( k \mapsto k^2 \). Unravelling why a profound correlation should prevail, impacting the collective values of terms across these two distinct ranges, is a complex task. A future thorough examination, likely incorporating statistical and probabilistic approaches, is essential for a more comprehensive understanding.

\end{enumerate}

Figure \ref{fig:f16} presents the partial sums \( Z(g_n ; 1,k) \) and \( Z'(g_n ; 1, k) \) as \( k \) ranges from 1 to \( N(n) \) for the good Gram point $n=195644$:

\begin{figure}[ht!]
	\centering
		\includegraphics[scale=0.45]{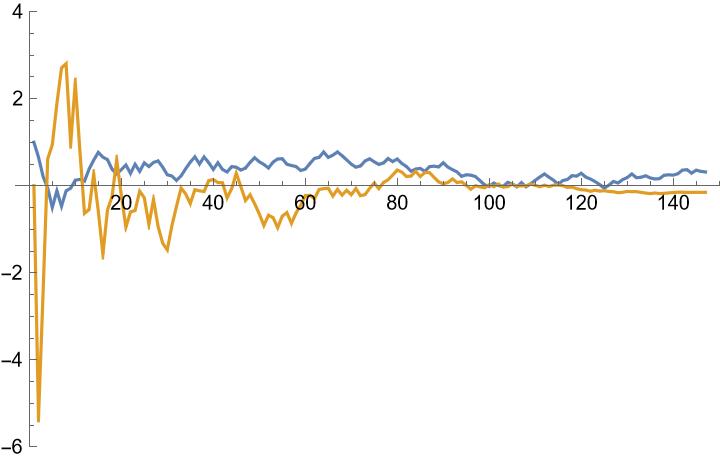} 	
		\caption{{\small the partial sums \( Z(g_n ; 1,k) \) and \( Z'(g_n ; 1, k) \) as \( k \) ranges from 1 to \( N(n) \) for the good Gram point $n=195644$}.}
\label{fig:f16}
	\end{figure}
	
	In Fig. \ref{fig:f16}, a distinctly different behaviour is exhibited when contrasted with the bad Gram points we previously considered. For the good Gram point $n=195644$, there is no initial surge in the partial sums of $Z'(g_n)$. Instead, the initial terms seem to counterbalance each other. Furthermore, within the middle range, there's only a modest shift in the partial sums. As a result, the value of $Z'(g_n)$ remains relatively small, especially when compared to $Z(g_n)$.

Finally, we should highlight that the initial surge observed in the partial sums $Z'(g_n)$ for the G-B-G Gram point $n=730119$ is common but not a universal occurrence. There are exceptions, as depicted in Fig. \ref{fig:f17}, which presents the partial sums \(Z(g_n; 1, k)\) and \(Z'(g_n; 1, k)\) for the G-B-G bad Gram point $n=300894$, where the surge in the initial range is notably absent. In this figure, as \(k\) varies from 1 to \(N(n)\), a different pattern of behaviour in the partial sums is clearly illustrated.
\begin{figure}[ht!]
	\centering
		\includegraphics[scale=0.45]{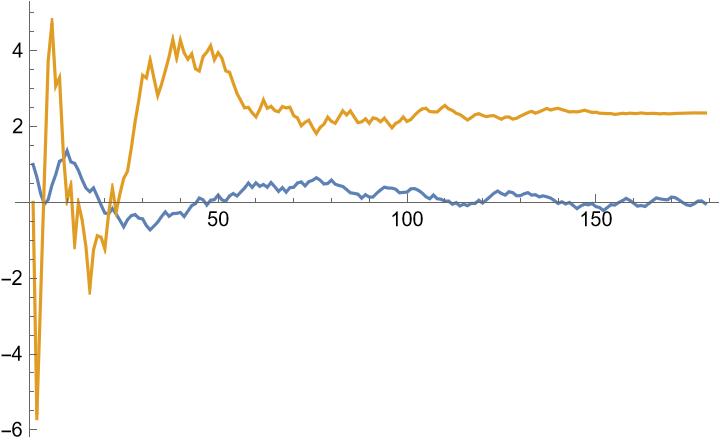} 	
		\caption{{\small the partial sums \( Z(g_n ; 1,k) \) and \( Z'(g_n ; 1, k) \) as \( k \) ranges from 1 to \( N(n) \) for the G-B-G Gram point $n=300894$}.}
\label{fig:f17}
	\end{figure}
	
As illustrated in Fig. \ref{fig:f17}, there is an absence of an initial surge and the terms appear to counterbalance each other, reminiscent of the behaviour observed for the good Gram point $n=195644$ shown earlier. However, a surge is seen eventually to emerge further along the range, as expected by the G-B-G repulsion conjecture.

To summarize, we have conducted an in-depth analysis of the fundamental relationships between the four components of the G-B-G conjecture, which are $Z(g_{n-1}), Z(g_n),Z(g_{n+1})$ and $Z'(g_n)$. We further conducted a numerical investigation of the repulsion property in various cases. We saw two cases of a G-B-G and non-G-B-G Gram point which were a-priori chosen to present an initial surge, and distinguished between the ability of a non-G-B-G point to counteract the surge, contrary to the moderate middle behaviour of the G-B-G point. However, we are not yet able to explain the other direction, which lies at the heart of the G-B-G conjecture and implies that if a strong pull towards the axis exists in the middle range than an even larger surge must initially occur. In particular, although the investigation conducted here sheds some light on the phenomena the main mystery still remains at this point: what causes the forced correlation between $Z(g_n)$ and $Z'(g_n)$ observed in the G-B-G case? In order to further investigate this question, it's essential delve deeper into the factors that determine whether a Gram point is good or bad.

\subsection{Monte-Carlo Simulation for Good and Bad Gram Points} \label{sub:sim} 
Let us further investigate the distribution of negative values within the vector \[ 
\mathbf{v}_n = \begin{bmatrix}
\frac{\cos(\ln(1) g_n)}{\sqrt{1}} \\
\frac{\cos(\ln(2) g_n)}{\sqrt{2}} \\
\vdots \\
\frac{\cos(\ln(N(n)) g_n)}{\sqrt{N(n)}}
\end{bmatrix} \in \mathbb{R}^{N(n)},
\]
for good and bad Gram points. As a study case, Fig. \ref{fig:f18} illustrates the vector $\mathbf{v}_n$ for the bad Gram point $n=730119$ and the good Gram point $n=730120$:
\begin{figure}[ht!]
	\centering
		\includegraphics[scale=0.4]{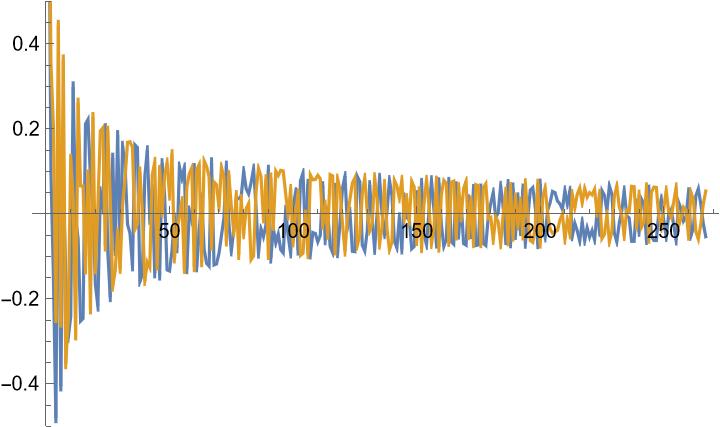} 	
		\caption{\small{The entries of the vector  $\mathbf{v}_n \in \mathbb{R}^{N(n)}$ for the bad Gram point \( n=730119 \) (blue) and the good Gram point \( n=730120 \) (orange).}}
\label{fig:f18}
	\end{figure}
	
From the illustrated vector \( \mathbf{v}_n \), distinguishing between a good and a bad Gram point is not immediately apparent. To better discern the distinction between good and bad Gram points, one might consider sorting the vector \(\mathbf{v}_n\) in ascending order, from its smallest to its largest entries. Let's denote this sorted vector by \(\mathbf{v}_n^{\mathrm{sorted}}\). Fig. \ref{fig:f19} illustrates the vector \(\mathbf{v}_n^{\mathrm{sorted}}\) for the bad Gram point $n=730119$ and the good Gram point $n=730120$:
\begin{figure}[ht!]
	\centering
		\includegraphics[scale=0.425]{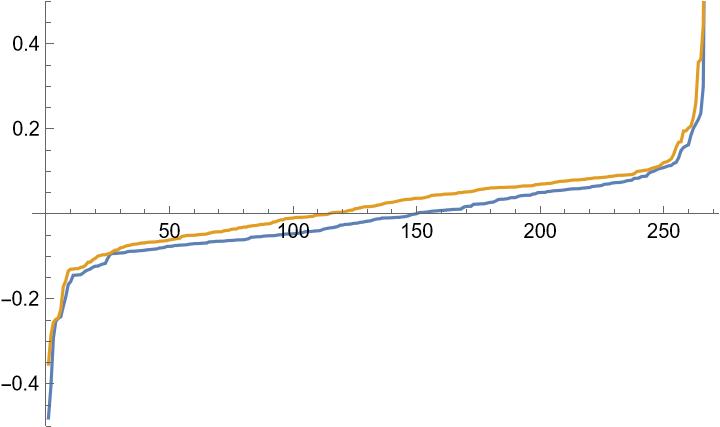} 	
		\caption{\small{The entries of the sorted vector \(\mathbf{v}_n^{\mathrm{sorted}}\) for the bad Gram point \( n=730119 \) (blue) and the good Gram point \( n=730120 \) (orange).}}
\label{fig:f19}
	\end{figure}
	
	In Fig. \ref{fig:f19}, a clear pattern differentiating between good and bad points remains elusive. Actually, both the vectors corresponding to good and bad points appear to exhibit a broadly similar structure. The pertinent question then arises: what characterizes this underlying structure? In order to investigate it we introduce the following: 
\begin{dfn}[Monte-Carlo Simulated Gram Vector]
For every $n \in \mathbb{Z}$, define the randomized $n$-th vector as
\[ 
\mathbf{v}^{random}_n = \begin{bmatrix}
\frac{\cos(\theta_1)}{\sqrt{1}} \\
\frac{\cos(\theta_2)}{\sqrt{2}} \\
\vdots \\
\frac{\cos(\theta_{N(n)})}{{\sqrt{N(n)}}}
\end{bmatrix} \in \mathbb{R}^{N(n)},
\]
where each $\theta_k$ is a random variable uniformly distributed over $[0,2\pi]$ for $k=1,\ldots,N(n)$. Let \( \mathbf{v}^{sorted-random}_n \) be the vector obtained by sorting the entries of \(\mathbf{v}^{random}_n \). The \emph{Monte-Carlo simulated $n$-th Gram vector} is defined as
the expectation \[
\mathbf{v}^{Monte-Carlo}_n := \mathbb{E}(\mathbf{v}^{sorted-random}_n).
\]
\end{dfn}

Essentially, the $n$-th Monte-Carlo vector $\mathbf{v}^{Monte-Carlo}_n$ is expected to offer the "baseline behaviour" of $\mathbf{v}^{sorted}_n$. Figure \ref{fig:f20} shows the entries of the vector \(\mathbf{v}_n^{\mathrm{sorted}}\) for the bad Gram point \( n=730119 \) (blue) and the good Gram point \( n=730120 \) (orange) together with \(\mathbf{v}_n^{\mathrm{Monte-Carlo}}\) (green):
\begin{figure}[ht!]
	\centering
		\includegraphics[scale=0.425]{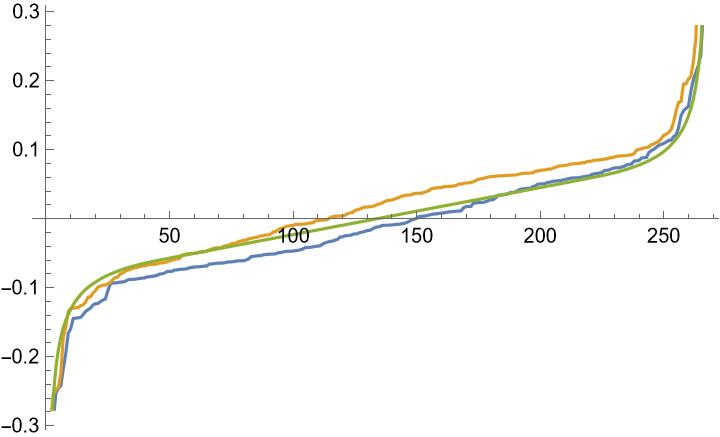} 	
		\caption{\small{The entries of \(\mathbf{v}_n^{\mathrm{sorted}}\) for the bad Gram point \( n=730119 \) (blue) and the good Gram point \( n=730120 \) (orange) together with \(\mathbf{v}_n^{\mathrm{Monte-Carlo}}\) (green)}.}
\label{fig:f20}
	\end{figure}

Since we view $\mathbf{v}^{Monte-Carlo}_n$ as representing the base standardized behaviour of $g_n$, we accordingly expect that the difference \[\mathbf{v}_n^{\mathrm{essential}}:=\mathbf{v}_n^{\mathrm{sorted}}-\mathbf{v}_n^{\mathrm{Monte-Carlo}}\]  would reflect the inherent properties of the Gram point. We have: 
\begin{prop} 
For every $n \in \mathbb{Z}$ the following holds: 
\begin{enumerate}
\item $\Sigma ( \mathbf{v}_n^{\mathrm{Monte-Carlo}}) =0$. 
\item $ \Sigma ( \mathbf{v}_n^{\mathrm{essential}})= (-1)^n Z(g_n)$.
\end{enumerate}
Where \( \Sigma(\mathbf{v} ) := \sum_{k=1}^{N(n)} \mathbf{v}_i, \) for a vector $\mathbf{v} \in \mathbb{R}^{N(n)}$. 
\begin{proof} 
We have:
\begin{enumerate}
\item Follows immediately from the anti-symmetry of the Monte-Carlo vector $\mathbf{v}_n^{\mathrm{Monte-Carlo}}$. 
\item From linearity
\begin{align*}
\Sigma ( \mathbf{v}_n^{\mathrm{essential}})=\Sigma ( \mathbf{v}_n^{\mathrm{sorted}}- \mathbf{v}_n^{\mathrm{Monte-Carlo}})=\Sigma ( \mathbf{v}_n^{\mathrm{sorted}})- \Sigma(\mathbf{v}_n^{\mathrm{Monte-Carlo}})=\\= \Sigma ( \mathbf{v}_n^{\mathrm{sorted}}) = \Sigma ( \mathbf{v}_n)=(-1)^n Z(g_n).
\end{align*}
 \end{enumerate}
\end{proof} 
\end{prop}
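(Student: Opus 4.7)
The proof is essentially a bookkeeping argument built on two elementary facts: the sum operator $\Sigma$ is invariant under any permutation of a vector's entries (in particular under sorting), and it is linear, so it commutes with the expectation that defines the Monte-Carlo vector. The plan is to exploit both facts in sequence, and only at the very end to invoke the Approximate Functional Equation \eqref{eq:Zgn} to identify the resulting sum with $(-1)^n Z(g_n)$.

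For part (1), I would first observe that for any realization of the random angles $\theta_1,\dots,\theta_{N(n)}$, the sorted vector $\mathbf{v}^{\mathrm{sorted\text{-}random}}_n$ is a permutation of $\mathbf{v}^{\mathrm{random}}_n$, so
\[
\Sigma\bigl(\mathbf{v}^{\mathrm{sorted\text{-}random}}_n\bigr)=\Sigma\bigl(\mathbf{v}^{\mathrm{random}}_n\bigr)=\sum_{k=1}^{N(n)}\frac{\cos(\theta_k)}{\sqrt{k}}.
\]
Then I would take expectations, push them inside the finite sum by linearity, and use $\mathbb{E}[\cos(\theta_k)]=0$ for $\theta_k$ uniform on $[0,2\pi]$. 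Since $\Sigma$ is a linear functional on $\mathbb{R}^{N(n)}$, it commutes with $\mathbb{E}$, giving $\Sigma(\mathbf{v}^{\mathrm{Monte\text{-}Carlo}}_n)=\mathbb{E}[\Sigma(\mathbf{v}^{\mathrm{sorted\text{-}random}}_n)]=0$. This is the author's "anti-symmetry" remark made precise.

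For part (2), the linearity of $\Sigma$ together with part (1) immediately reduces $\Sigma(\mathbf{v}_n^{\mathrm{essential}})$ to $\Sigma(\mathbf{v}_n^{\mathrm{sorted}})$, and sorting-invariance collapses this to $\Sigma(\mathbf{v}_n)=\sum_{k=1}^{N(n)}\cos(\ln(k)g_n)/\sqrt{k}$. The identification with $(-1)^n Z(g_n)$ then comes directly from the AFE identity \eqref{eq:Zgn}, up to the normalization convention for $\mathbf{v}_n$ (which must absorb the factor of $2$ in \eqref{eq:Zgn}) and up to the stated $O(g_n^{-1/4})$ error term that is suppressed in the statement of the proposition.

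There is no real obstacle; the only delicate point is making sure the interchange of $\Sigma$ and $\mathbb{E}$ is justified, which it is trivially because $N(n)$ is finite and each bounded entry has finite expectation. The substance of the proposition lies not in the proof but in what it says afterwards: it cleanly isolates the deterministic, arithmetic content of the Gram sum $(-1)^n Z(g_n)$ as living entirely in $\mathbf{v}_n^{\mathrm{essential}}$, while the Monte-Carlo baseline $\mathbf{v}_n^{\mathrm{Monte\text{-}Carlo}}$ contributes a zero-sum "noise profile" common to every $n$.
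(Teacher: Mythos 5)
Your proposal is correct and follows the same overall scheme as the paper: permutation-invariance of $\Sigma$, linearity, and the AFE. The one place you genuinely depart from the paper is part (1): the paper's proof invokes the \emph{anti-symmetry} of the Monte-Carlo vector (i.e.\ that $\mathbb{E}[\mathbf{v}^{\mathrm{sorted\text{-}random}}_n]_j = -\mathbb{E}[\mathbf{v}^{\mathrm{sorted\text{-}random}}_n]_{N(n)+1-j}$, a consequence of the sign-flip symmetry of the joint distribution surviving the sort), whereas you bypass that structural observation entirely by noting $\Sigma$ is permutation-invariant and then commuting the finite sum with $\mathbb{E}$ to get $\sum_k \mathbb{E}[\cos\theta_k]/\sqrt{k}=0$. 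Your route is shorter and proves exactly what is needed; the paper's route establishes the strictly stronger anti-symmetry property, which is what gives the Monte-Carlo vector its characteristic visual shape in Fig.~\ref{fig:f20}, but is not needed for the proposition as stated. You are also right to flag the normalization: with $\mathbf{v}_n$ as defined, $\Sigma(\mathbf{v}_n)=\sum_{k=1}^{N(n)}\cos(\ln(k)g_n)/\sqrt{k}$ equals $\tfrac{1}{2}(-1)^n Z(g_n)$ up to the $O(g_n^{-1/4})$ error of the AFE, so the paper's claimed identity $\Sigma(\mathbf{v}_n)=(-1)^n Z(g_n)$ is off by a factor of $2$ and silently drops the error term --- a genuine imprecision in the paper that your proposal correctly notices.
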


Fig. \ref{fig:f21} shows \(\mathbf{v}_n^{\mathrm{essential}}\) for \( n=730119 \) (blue) and \( n=730120 \) (orange): 
	\newpage
	
	\begin{figure}[ht!]
	\centering
		\includegraphics[scale=0.425]{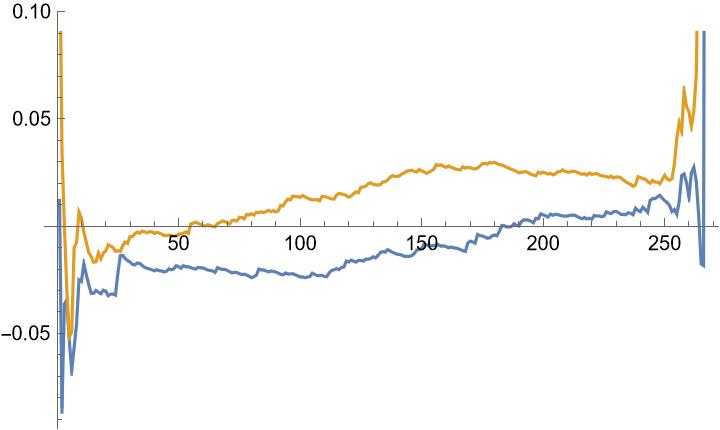} 	
		\caption{\small{\(\mathbf{v}_n^{\mathrm{essential}}\) for the bad Gram point \( n=730119 \) (blue) and the good Gram point \( n=730120 \) (orange).}}
\label{fig:f21}
	\end{figure}

Remarkably, \(\mathbf{v}_n^{\mathrm{essential}}\) indeed reveals the essential unique chaotic features of the Gram point $g_n$, after removing from it the structured baseline. In particular, one can discern from the graph the abundant negativity of the bad Gram point \( n=730119 \) as compared to the abundant positivity of the good Gram point  \( n=730120 \). In light of the above, we propose that a localized adaptation of the Monte-Carlo analysis presented herein, when applied to \(Z(g_n)\), \(Z'(g_n)\), and their respective adjustments, may offer a constructive direction for further investigation and possible future proof of the repulsion property.
\section{Edwards' Speculation on RH and its Shortcomings}
\label{s:11}

Up to this point, we have studied the discriminant of the linear curve $\Delta_n(r)$ from an infinitesimal point of view, focusing on its properties in a neighbourhood of $r=0$. However, our ultimate aim is to discuss the corrected Gram's law, which is principally concerned with the value of $r=1$. In the following, we will elucidate how our local analysis can be pushed forward to lead to meaningful insights on the value at $r=1$.

 In order to explain our approach for this transition from local to global, based on the $A$-space, let us return to the rough relation between the zeros of the core function $Z_0(t)=\cos(\theta(t))$ and the zeros of $Z(t)$, observed in Section \ref{s:2}. As mentioned, this relation has been noted by various authors over the years \cite{E,FL,J,SP1}. Particularly, Edwards, in Section 7.8 of his seminal work \cite{E}, presents some intriguing "speculations" on the possible origins of the Riemann hypothesis. He even proposes that Riemann might have noticed the numerical fact that the zeros of $\cos(\theta(t))$ serve as an initial, crude approximation of the zeros of $Z(t)$, especially for sufficiently small $t$ values. 
 
 In this section, we seek to examine Edwards' approach in the context of our methodology, identify its shortcomings, and offer a more sensitive generalization based on the $A$-philosophy. Edwards conjectures that the roots of the RH may be traced back to a rather heuristic concept, described by him as the idea that \emph{"one could go from a zero $t^0_n$ of $\cos(\theta(t))$ to a zero $t_n$ of $Z(t)$"}
via some iterative numerical process. However, Edwards remains quite ambiguous about the nature of this process and expresses substantial scepticism about the existence of a universally applicable procedure of this kind. He particularly points out the
extreme failure of Gram's law, for example the failure between $g_{6708}$ and $g_{6708}$ in Lehemer's graph as a key source of his doubts. 

In classical Newton's method, one typically starts with a crude initial guess $t^0$ for a zero of a function $F(t)$, then applies the iterative process
\[
t^{k+1} = t^{k} - \frac{F(t^{k})}{F'(t^{k})},
\]
aiming to locate a true zero, given by $t = \lim_{k \rightarrow \infty} (t^k)$, see for instance \cite{DB,SM}. Consequently, the following question naturally arises:
\begin{myquestion}[Edwards' speculation - Naive version] 
For $n \in \mathbb{Z}$ let $t^0_n$ be the $n$-th zero of the core function $Z_0(t)$. Can we use Newton's method starting from $t^0_n$ to consistently converge to $t_n$, the $n$-th real zero of $Z(t)$?
\end{myquestion}
Equivalently, we have: 
\begin{myquestion}[Corrected Gram's law - Naive version] For $n \in \mathbb{Z}$ let $g_n$ be the $n$-th Gram point (extremal point of $Z_0(t)$). Can we use Newton's method starting from $g_n$ to consistently converge to $\widetilde{g}_n$, the $n$-th real extremal point of $Z(t)$ and show that $(-1)^n Z(\widetilde{g}_n)>0$?
\end{myquestion}

Consider the following example: 

\begin{ex}[The Lehmer points $t_{6708}$ and $t_{6709}$] \label{ex:2} Let us apply Newton's method, starting from $t^0_n$, to the two Lehmer points referred to by Edwards. These points are approximately given by
\begin{equation}
\begin{array}{ccc}
t_{6708}\approx 7005.06 & ; & t_{6709}\approx 7005.10.
\end{array}
\end{equation}
The first Newton iterations $t^k_{6708}$ and $t^k_{6709}$ for these two zeros are presented in Table \ref{table:1}:
\begin{table}[htbp]
\centering
\begin{tabular}{||c c c ||} 
 \hline
 k & $t^k_{6708}$ & $t^k_{6709}$  \\ [0.5ex] 
 \hline\hline
 0 & 7004.95 & 7005.84  \\ 
 \hline
 1 & 7005.01 & 7005.23 \\
 \hline
 2 & 7005.04 & 7005.15  \\
 \hline
 3 & 7005.05 & 7005.12 \\
 \hline
 4 & 7005.06 & 7005.10 \\
  [1ex] 
 \hline
\end{tabular}
\caption{\small{Newton's Iterations for $t^{k}_{6708}$ and $t^{k}_{6709}$}}
\label{table:1}
\end{table}

    From the table, it is clear that despite the closeness of the Lehmer pair $t_{6708}$ and $t_{6709}$, the iterations of $t^k_{6708}$ and $t^k_{6709}$ do converge to $t_{6708}$ and $t_{6709}$, respectively. From our point of view, the following Fig. \ref{fig:f8.1} shows the graphs of $\Delta_{6708} (r)$ (blue) and $Z_N (g_{6708} ; r)$ (orange) with $0 \leq r \leq 1$:
    
\begin{figure}[ht!]
	\centering
		\includegraphics[scale=0.4]{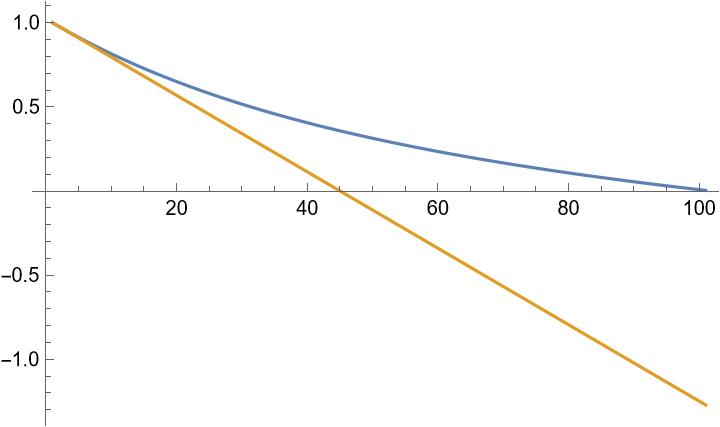} 	
		\caption{\small{Graph of the discriminant $\Delta_{6708} (r)$ (blue) and $Z_N (g_{6708} ; r)$ (orange) with $0 \leq r \leq 1$.}}
\label{fig:f8.1}
	\end{figure}
	
	Despite the extreme violation of Gram's law, as exemplified by the negative value of $Z_N (g_{6708} ; r)$, the corrected version of Gram's law still holds, as demonstrated by the graph of the discriminant $\Delta_{6708} (r)$. Let us note that $g_{6708}$ is an isolated bad Gram point with an actually relatively large viscosity $\mu(g_{6708})=6.41706$. 
 \end{ex}

However, although Newton's method appears successful in the above example, Edwards' concerns are actually far from being unjustified. Notably, even in algebraic cases, the convergence of Newton's method is well known to be closely tied to the properties of the discriminant. Hence, in our case, let us look for a violation of Newton's method where the discriminant is anomalous. The following is an example of an isolated bad Gram point with small viscosity, showing failure of Newton's method: 

\begin{ex}[Failure of Newton's method near $g_{730119}$] \label{ex:g} Consider $g_{730119}$ which is an isolated bad Gram point with relatively very small viscosity $\mu(g_{730119})=4.46023$. Figure \ref{fig:f8.2} shows the graphs of $\Delta_{730119} (r)$ (blue) and $Z_N (g_{730119} ; r)$ (orange) with $0 \leq r \leq 1$:
 \begin{figure}[ht!]
	\centering
		\includegraphics[scale=0.4]{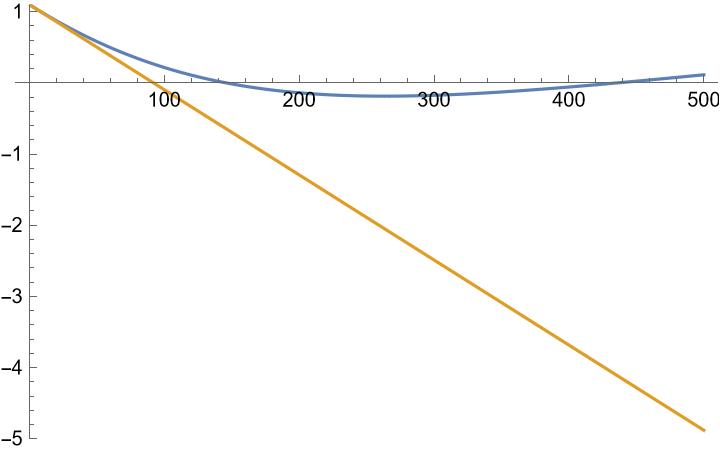} 	
		\caption{\small{Graph of the discriminant $-\Delta_{730119} (r)$ (blue) and $-Z_N (g_{730119} ; r)$ (orange) with $0 \leq r \leq 1$.}}
\label{fig:f8.2}
	\end{figure}
	
	In Fig.~\ref{fig:f8.2}, we observe a stark deviation from Gram's law and its corrected version for \( r=1 \). Unlike prior examples, the discriminant \(\Delta_{730119}(r)\) doesn't maintain positivity over the interval \(0 \leq r \leq 1\). Specifically, it enters a region of negativity. This transition suggests a collision of two consecutive zeros in the vicinity of \(g_n\), as seen in Fig.~\ref{fig:f8.3}:

	\begin{figure}[ht!]
	\centering
		\includegraphics[scale=0.4]{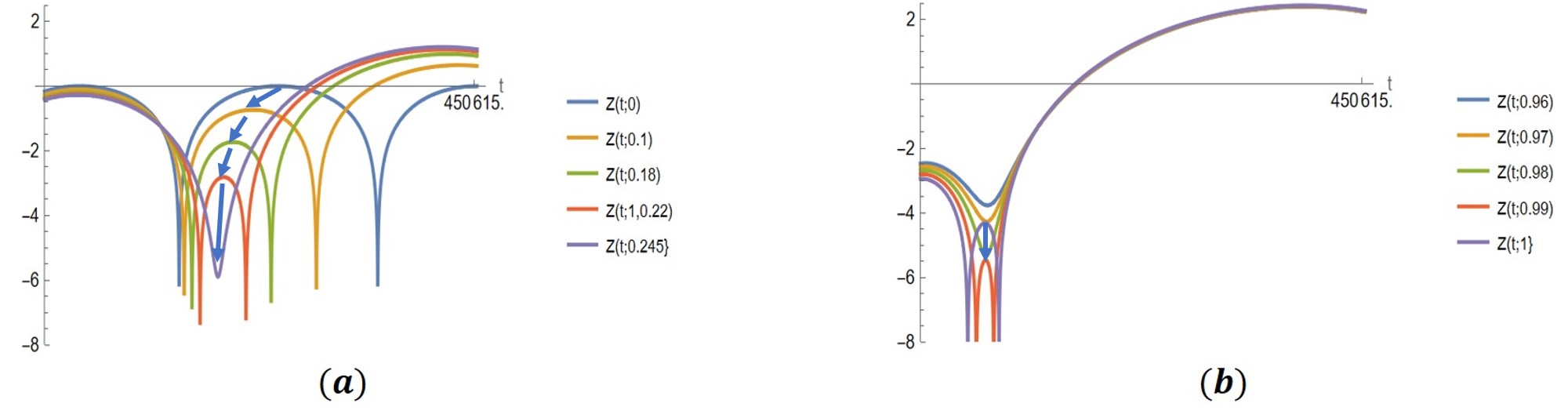} 	
		\caption{\small{$\ln \abs{Z_N(t ; r)}$ for the following values of $r$:  (a) $0$ (blue), $0.1$ (yellow), $0.18$ (green), $0.22$ (red), $0.245$ (purple) and (b) $0.96$ (blue), $0.97$ (yellow), $0.98$ (green), $0.99$ (red), $1$ (purple) for $N=268$ in the range $450613.58 \leq t \leq 450614.8$.}}
\label{fig:f8.3}
	\end{figure} 
	
	The domain of the graph in Fig.~\ref{fig:f8.3} spans from \( t = 450613.58 \) to \( t = 450614.8 \) with \( N = 268 \). The behaviour captured in this figure is crucial for interpreting the discriminant trends observed in Fig.~\ref{fig:f8.2}. Specifically: (1) the collision of zeros, especially visible in (a); (2) the subsequent movement of these zeros, now as complex values, descending along the real axis in areas where the discriminant is negative; and (3) their eventual emergence lower on the real line, illustrated in (b). We will revisit this example when discussing the corrected connecting path in Example~\ref{ex:corrected}.

 \end{ex}

The above example also reveals a peculiar aspect related to Newton's method:
\begin{cor}[Violation of straight-forward Edwards' speculation]
Newton's method, starting from $t^0_n$ for some integer $n$, does not always converge to $t_n$, the $n$-th real zero of $Z(t)$.
\end{cor}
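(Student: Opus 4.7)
The plan is to prove the corollary by reduction to the concrete counterexample already isolated in Example~\ref{ex:g}, since ``does not always converge'' is a purely existential statement requiring only a single index $n$ to witness failure. The natural candidate is the index $n$ associated with the isolated bad Gram point $g_{730119}$, for which Figure~\ref{fig:f8.2} shows the discriminant $\Delta_{730119}(r)$ entering a region of negativity on some sub-interval of $[0,1]$.

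The verification proceeds in three steps. First, I would compute $t^0_n$ for an appropriate $n$ adjacent to $g_{730119}$ via the closed form $t^0_n = (8n-11)\pi/(4 W_0((8n-11)/(8e)))$ recalled in Section~\ref{s:2}. Next, I would run the standard Newton iteration $t^{k+1} = t^k - Z(t^k)/Z'(t^k)$ starting from $t^0_n$, with $Z$ and $Z'$ evaluated to high precision via the Riemann-Siegel formula in the window around $t \approx 450613.58$. Finally, I would verify that the stabilized limit of the iterates coincides with some $t_m$ satisfying $m \neq n$; the conclusion then follows immediately from the definition of convergence.

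The reason this is not an accident but rather a structural consequence of the discriminant analysis is the following. Because $\Delta_{730119}(r)$ changes sign on $(0,1)$, Theorem~\ref{thm:A} forces the pair of consecutive zeros $t_n(r), t_{n+1}(r)$ of the linear curve $Z_N(t;r)$ to collide and leave the real line before $r$ reaches $1$, then re-emerge at shifted positions, as visualised in Figure~\ref{fig:f8.3}. Since Newton's method has no access to the deformation parameter $r$, it cannot track this complex excursion and is therefore drawn to whichever real zero of $Z(t)$ lies in the basin of attraction of $t^0_n$ at $r=1$, which by the shift is no longer $t_n$. In other words, the collision breaks the naive identification between the $n$-th zero of the core $Z_0(t)$ and the $n$-th zero of $Z(t)$ that Edwards' speculation implicitly assumes.

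The main obstacle is computational rather than conceptual: certifying unambiguously that the Newton limit is some $t_m$ with $m \neq n$ requires both high-precision evaluation of $Z$ and $Z'$ near $t \approx 450613$ (the sum \eqref{eq:Z-function} at this height is borderline, so one should use the full Riemann-Siegel formula with explicit error term rather than \eqref{eq:Z-function} for the iteration), and an independent enumeration of the zeros of $Z(t)$ in the target window to pin down the value of $m$, for which the Platt-Trudgian verification \cite{PT} is amply sufficient. Once these bookkeeping ingredients are in place the counterexample is unambiguous and the corollary follows.
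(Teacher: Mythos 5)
Your proposal matches the paper's proof in spirit: the paper exhibits the concrete failure near $g_{730119}$ by starting Newton's method at the core zero $t^0_{730120}=450613.9648$ and observing that the iterates converge to $t_{730121}=450613.8004$ rather than the intended $t_{730120}=450613.7144$. Your supplementary remark about the sign change of $\Delta_{730119}(r)$ and the resulting collision and re-emergence of the consecutive zeros is consistent with Example~\ref{ex:g} and correctly identifies the structural cause of the Newton failure, though the paper leaves that explanation in the preceding example rather than inside the proof of the corollary itself.
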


\begin{proof}
 Consider the zeros of the $Z$-function given as:
$$
\begin{array}{ccc} t_{730120}=450613.7144 & ; & t_{730121}=450613.8004. \end{array}
$$ 
If we initiate Newton's method from the point $t^0_{730120}=450613.9648$ we observe that 
	\(
	lim_{k \rightarrow \infty} (t_{730120}^k)=t_{730121} \)
	signifying that the process converges to the adjacent zero, $t_{730121}$, rather than the intended zero, $t_{730120}$. Hence, Newton's method does not always converge to the intended zero.
\end{proof}

This result implies that the straightforward application of Newton's method in identifying the zeros of $Z(t)$ from the zeros of $Z_0(t)$ may not always provide accurate results. It can lead to misidentification of zeros, as seen in the example where the method converged to an adjacent zero instead of the intended zero. Moreover, we see that for $n=730119$ there exists  a region within this interval for which $(-1)^n \Delta_n(r)$ is negative, contrary to previous examples. However, our dynamic version of RH does require us to obtain a one-to-one correspondence between the zeros of $Z_0(t)$ and those of $Z(t)$. In the subsequent section, we will propose a more sensitive approach to ensuring the validity of the corrected Gram's law, based on the A-philosophy assuming the G-B-G repulsion relation.

\section{Correcting the Linear Curve - Overcoming Over-Tasking} \label{s:12}

The corrected Gram's law requires to show that \( (-1)^n \Delta_n(\overline{1}) \) is well-defined and positive. For good Gram points, as well as for the majority of bad Gram points, the linear curve in the \( A \)-parameter space satisfies \( (-1)^n \Delta_n(\overline{r}) > 0 \) for all \( 0 \leq r \leq 1 \), thereby proving suitable for establishing the corrected Gram's law for these points. However, we observed that there exist bad Gram points \( g_n \) where the discriminant \( \Delta_n(\overline{r}) \) for the linear curve may not consistently be non-negative. To affirm the corrected Gram's law for such Gram points, our strategy involves substituting the linear curve with a more sensitive  curve \( \gamma \) in the multi-dimensional \( A \)-space for which \( (-1)^n \Delta_n(r; \gamma) \) is anticipated to be non-negative for all \( 0 \leq r \leq 1 \). In order to describe the construction of $\gamma$ let us note that we observed in Example \ref{ex:g} for $n=730119$ two simultaneous phenomena which occur as $r$ increases:
\begin{enumerate}
\item The point $g_n(r)$ undergoes a sideways shift. This shift is a direct consequence of the viscosity bound, causing the position of $g_n(r)$ to vary laterally.
\item The value of $(-1)^n \Delta_n (r)$ experiences a decrease due to the fact that $g_n$ is characterized as a bad Gram point.
\end{enumerate} 

We observed that the decrease in $(-1)^n \Delta_n(r)$ for the linear curve happens at a faster initial rate than the lateral shift of $g_n(r)$, a condition we refer to as \emph{over-tasking}. This suggests that, in such cases, we should update the connecting curve by arranging for the lateral shift to occur before the decrease towards the axis takes place. Consider Fig. \ref{fig:f7}   

	 \begin{figure}[ht!]
	\centering
		\includegraphics[scale=0.4]{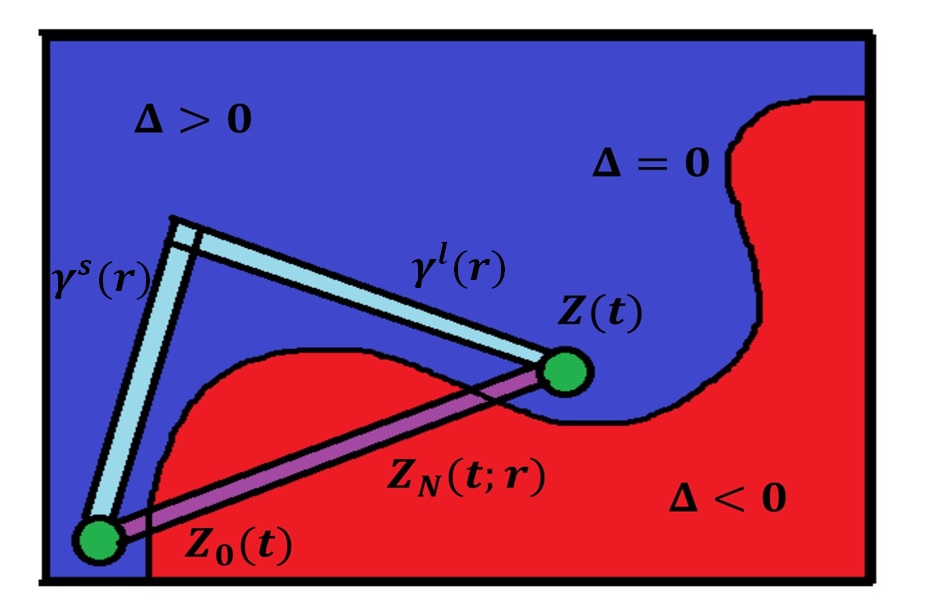} 	
		\caption{\small{The line $Z_N(t;r)$ (purple) in parameter space connecting $Z_0(t)$ to $Z(t)$ in a colliding manner together with the
		corrected broken curve (cyan) connecting the core to $Z(t)$ without passing through the negative region (red).}}
\label{fig:f7}
	\end{figure} 
Figure \ref{fig:f7} suggests a schematic interpretation, in terms of the geometry of parameter space. To put this in more relatable terms, recall the formulation for the linear curve:
\[
Z_N(t;r):= Z_0(t) + r \cdot \sum_{k=1}^{N} A_k(t) \in \mathcal{Z}_N,
\] 
Here, each term $A_k(t)$ is defined as:

\[
A_k(t):= \frac{1}{\sqrt{k+1} } \cos ( \theta (t) - \ln(k+1) t).
\]
In this standard linear curve, all terms $A_k(t)$ see an equal increment as we increase the value of $r$. Once we are willing to take general curves into account, for which each parameter can be adjusted independently, there is a tremendous amount of possibilities arising. Indeed, for the $n$-th discriminant, the dimension of the parameter space is $N(n)$-dimensional which, of course, goes to infinity as $n \rightarrow \infty$. Therefore, how do we determine which curve within this vast parameter space is best suited to adjust the linear curve?

This is the point where our previous local analysis of the discriminant and specifically the repulsion property becomes crucial. Assuming that \( n \in \mathbb{Z} \) is an isolated bad Gram point, according to the repulsion property, we expect that among the indices, there is an enhanced tendency to alter the position of \( g_n ( \overline{a}) \) relative to the \( t \)-axis. This leads us to propose that one can partition the indices ranging from one to \( N(n) \) into two fundamentally distinct classes:

\begin{enumerate}
    \item \emph{Shifting indices:} The index set \( I_{shift} \) which corresponds to terms exerting a significant influence on the lateral shift in \( g_n(r) \).
    \item \emph{Descending indices:} The index set \( I_{descend} \) associated with terms that predominantly contribute to the reduction in \( (-1)^n \Delta_n(r) \).
\end{enumerate}

This allows us to suggest a substantial dimensionality reduction by considering the following $2$-parametric system arising from this split of the indices:

\[ 
\label{eq:2dim}
Z_N(t;r_1,r_2):= Z_0(t) + r_1 \cdot \sum_{k \in I_{shift}} A_k(t)+ r_2 \cdot \sum_{k \in I_{descend}} A_k(t) \in \mathcal{Z}_N,
\] 
We hence get the following simplified version of the corrected Gram's law: 

\begin{conj}[$2$-dim. corrected Gram's law for isolated points] \label{con:2dim} Let $g_n$ be an isolated bad Gram point. Then it is possible to find a curve $\gamma(r) = (r_1(r),r_2(r))$ in the $2$-dimensional parameter space \eqref{eq:2dim} such that $\gamma(0)=(0,0)$, $\gamma(1)=(1,1)$ and \[ \Delta_n( r ; \gamma):=Z_N(g_n (r_1(r), r_2(r)) ;r_1(r),r_2(r) ) >0 \] for all $ 0 \leq r \leq 1$. 
\end{conj}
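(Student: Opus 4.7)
The plan is to reformulate the assertion as a topological obstruction problem in $[0,1]^2$ and then use the G-B-G repulsion as the quantitative input that pushes the obstruction aside. Set
\[
B_n := \{(r_1,r_2) \in [0,1]^2 : (-1)^n Z_N(g_n(r_1,r_2); r_1, r_2) \leq 0\};
\]
by Theorem \ref{thm:A} and the argument in the proof of Theorem \ref{thm:B}, $B_n$ is precisely the closed locus on which the pair $t_n,t_{n+1}$ has either collided or left the real line. Conjecture \ref{con:2dim} is equivalent to the statement that $B_n$ does not separate $(0,0)$ from $(1,1)$ in the unit square, and Example \ref{ex:g} exhibits an isolated bad Gram point for which the diagonal linear curve meets $B_n$, so a non-trivial detour is needed.

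To specify the detour, I would first fix the index partition. Theorem \ref{thm:first-order} identifies $\cos(\ln(k{+}1)g_n)/\sqrt{k{+}1}$ as the infinitesimal weight governing the descent of $\Delta_n$, and Lemma \ref{Lem:7.2} identifies $\sin(\ln(k{+}1)g_n)/\sqrt{k{+}1}$ (up to a logarithmic factor) as the infinitesimal weight governing the lateral drift of $g_n(\overline{a})$. A natural choice is therefore
\[
I_{\text{shift}} := \{1 \leq k \leq N(n) : |\sin(\ln(k{+}1)g_n)| \geq |\cos(\ln(k{+}1)g_n)|\}, \quad I_{\text{descend}} := \{1,\dots,N(n)\} \setminus I_{\text{shift}},
\]
so that $r_1$ primarily displaces $g_n(r_1,r_2)$ sideways while $r_2$ primarily drives $Z_N$ at the displaced extremal point towards the axis. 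I would then take the L-shaped path $\gamma(r) = (2r,0)$ for $r \in [0,1/2]$ followed by $\gamma(r) = (1, 2r-1)$ for $r \in [1/2, 1]$. Along the first leg the $I_{\text{descend}}$-terms are switched off, so $(-1)^n \Delta_n(r_1,0)$ is expected to stay close to $+1$ (mirroring the good-point behaviour of Example \ref{ex:1}), while $g_n(r_1,0)$ accumulates the full lateral shift whose magnitude at $(1,0)$ is bounded below through Theorem \ref{thm:Grad} by the viscosity estimate $|\mu(g_n)| > 4$. Along the second leg, this accumulated shift is meant to pre-empt the collision incurred by the diagonal curve.

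The main obstacle, and the step I expect to be genuinely hard, is promoting this infinitesimal picture to a globally valid sign control along the entire broken path. Theorem \ref{thm:CM} only yields the Hessian near $\overline{a} = \overline{0}$, and, as noted at the end of Section \ref{s:7}, higher Taylor coefficients deteriorate as one approaches $\overline{a} = \overline{1}$, so the unit interval is not accessible by pure Taylor expansion. Concretely two quantitative estimates must be supplied: first, a uniform lower bound $(-1)^n \Delta_n(r_1,0) \geq c_1(g_n) > 0$ for $r_1 \in [0,1]$, showing that the $I_{\text{shift}}$-restricted sub-family never generates a premature collision; and second, a uniform lower bound $(-1)^n \Delta_n(1,r_2) \geq c_2(g_n) > 0$ for $r_2 \in [0,1]$, invoking Conjecture \ref{con:8.1} to guarantee that the pre-shift overcompensates the descent driven by $I_{\text{descend}}$. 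Since Conjecture \ref{con:8.1} is itself open, my proposal only \emph{reduces} Conjecture \ref{con:2dim} to the G-B-G repulsion; moreover, the universal index split above is unlikely to be optimal for every isolated bad Gram point, and a fully rigorous argument will presumably require an $n$-adapted partition together with an explicit description of $\partial B_n$ inside $[0,1]^2$.
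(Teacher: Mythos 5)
The statement you were asked to prove is Conjecture \ref{con:2dim}, which the paper does not prove: Section \ref{s:12} offers only a heuristic construction and explicitly rests it on the two further open conjectures \ref{con:8.1} and \ref{con:energy}. Your proposal, which you honestly frame as a reduction rather than a proof, should therefore be judged against that heuristic, and there are two substantive places where it deviates. First, your index split $I_{\text{shift}} = \{k : |\sin(\ln(k{+}1)g_n)| \geq |\cos(\ln(k{+}1)g_n)|\}$ is sign-blind. The paper's own choice in Example \ref{ex:corrected} (Table \ref{tab:values}) selects indices with large \emph{positive} values of $B_k(g_n)$, i.e.\ terms whose contribution to $\nabla g_n(\overline{0})$ actually pushes the extremal point in the desired direction. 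Your criterion would retain indices such as $k=7$ and $k=11$ in that example, which have $|\sin|$ large but $B_k(g_n)<0$, so they pull $g_n(\overline{a})$ the wrong way and partly cancel the shift. Second, your L-shaped path $(2r,0)$ followed by $(1,2r-1)$ is not the curve the paper proposes: the paper's shifting leg is defined to follow the level set $(-1)^n\Delta_n(\overline{a})=1$, which keeps the discriminant fixed by construction; your first leg merely zeroes out the descending parameters, and the claim that $(-1)^n\Delta_n(r_1,0)$ stays close to $+1$ along it does not follow from your partition. The shift indices still carry nonzero first-order weights $\cos(\ln(k{+}1)g_n)/\sqrt{k{+}1}$ (Theorem \ref{thm:first-order}), and nothing forces those to cancel, so the descent can already begin on the first leg.

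The deeper gap is in what you reduce to. You close the argument by ``invoking Conjecture \ref{con:8.1}'' for the second leg, but the G-B-G repulsion is an infinitesimal viscosity statement at $\overline{a}=\overline{0}$; it controls the size of $\nabla g_n(\overline{0})$ and hence the Hessian $H_n(0)$, but it says nothing directly about the sign of $(-1)^n\Delta_n(1,r_2)$ for $r_2$ away from $0$. The paper is careful to separate this: what is actually needed on the descending leg is the global energy bound of Conjecture \ref{con:energy}, a strictly stronger statement which the paper itself admits is essentially tantamount to RH for isolated points. Your proposal collapses these two distinct conjectural inputs into one, so the reduction as written is not a reduction to G-B-G alone. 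The corrected claim would be: your L-shaped path, with a sign-aware $n$-adapted index split, reduces Conjecture \ref{con:2dim} to the conjunction of Conjecture \ref{con:8.1} (to define the split and guarantee a nontrivial shifting leg) and an analogue of Conjecture \ref{con:energy} (to control the sign on the descending leg globally), which matches the paper's own stated reduction.
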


 In fact, we suggest \( \gamma_{\text{correct}} \subset \mathbb{R}^2 \) should be composed of the following two stages:

\begin{description}
  \item[The shifting stage] This involves increasing the shifting parameters alongside the descending parameters. The goal is to shift \( g_n (\overline{a}) \) while maintaining \( (-1)^n \Delta_n(\overline{a}) \) constant. This stage starts at \( Z_0(t) \) and follows the non-linear level curve of \( (-1)^n \Delta_n (\overline{a}) \) with a value of 1, until an exit point is reached.

  \item[The descending stage] After \( g_n (\overline{a}) \) is approximately in its intended position, all parameters increase linearly towards \( Z_N(t) \). This stage is a linear segment connecting the prior exit point to \( (1,1) \). It's termed the 'descending' stage due to its potential to decrease the value of \( (-1)^n \Delta_n( \overline{a}) \).
\end{description}

For the majority of bad Gram points, the shifting stage is negligible. As a result, the linear and descending curves are approximately identical. Let us thus consider the following example where a substantial shift is required: 

\begin{ex}[Correcting curve for $g_{730119}$] \label{ex:corrected} In order to define the correcting curve for $n=730119$ we should first identify the shifting parameters. As we saw in Fig. \ref{fig:f14} the main surge in this case occurs in the domain where $k$ is between $1$ and $\sqrt{N(n)}=15$.  Table \ref{tab:values} shows the values of $\cos(\ln(k)g_n), \sin(\ln(k)g_n), A_k(g_n)$ and $B_k(g_n)$ within this range: 
\captionsetup{skip=10pt}

\definecolor{LightRed}{rgb}{1,0.8,0.8}
\begin{table}[h!]
    \centering
    \resizebox{\textwidth}{!}{%
        \begin{tabular}{
            |c|
            >{\columncolor{red!20}}c|
            >{\columncolor{red!20}}c|
            c|
            >{\columncolor{red!20}}c|
            c|
            >{\columncolor{red!20}}c|
            c|c|c|c|c|
            >{\columncolor{red!20}}c|
            c|c|c|
        }
            \arrayrulecolor{black}\hline
            \( k \) & 1 & 2 & 3 & 4 & 5 & 6 & 7 & 8 & 9 & 10 & 11 & 12 & 13 & 14 & 15 \\
            \hline
            \(\cos(\ln(k)g_n)\) & -0.14 & 0.25 & 0.96 & -0.53 & 0.99 & -0.20 & 0.41 & 0.88 & 0.77 & -0.99 & 0.03 & 0.21 & 0.94 & 0.95 & -0.85 \\
            \hline
            \(\sin(\ln(k)g_n)\) & 0.99 & 0.97 & 0.28 & 0.85 & -0.11 & 0.98 & -0.91 & -0.48 & 0.64 & -0.11 & -1.0 & 0.98 & 0.33 & 0.30 & -0.53 \\
            \hline
            \( A_k(g_n) \) & -0.099 & 0.14 & 0.48 & -0.24 & 0.41 & -0.074 & 0.14 & 0.29 & 0.24 & -0.30 & 0.0082 & 0.058 & 0.25 & 0.25 & -0.21 \\
            \hline
            \( B_k(g_n) \) & 6.86 & 5.02 & 1.16 & 3.02 & -0.345 & 2.70 & -2.27 & -1.09 & 1.34 & -0.210 & -1.79 & 1.64 & 0.521 & 0.449 & -0.748 \\
            \hline
        \end{tabular}
    }
    \caption{\small{$\cos(\ln(k)g_n), \sin(\ln(k)g_n), A_k(g_n)$ and $B_k(g_n)$ for \( k=1,...,15 \).}}
    \label{tab:values}
\end{table}

By direct computation, the shifting parameters, for which $sin(ln(k) g_n)$ is especially large, are $k=1,2,4,6,12$ and are marked red in the table. consider Fig. \ref{fig:f25}:
 \begin{figure}[ht!]
	\centering
		\includegraphics[scale=0.4]{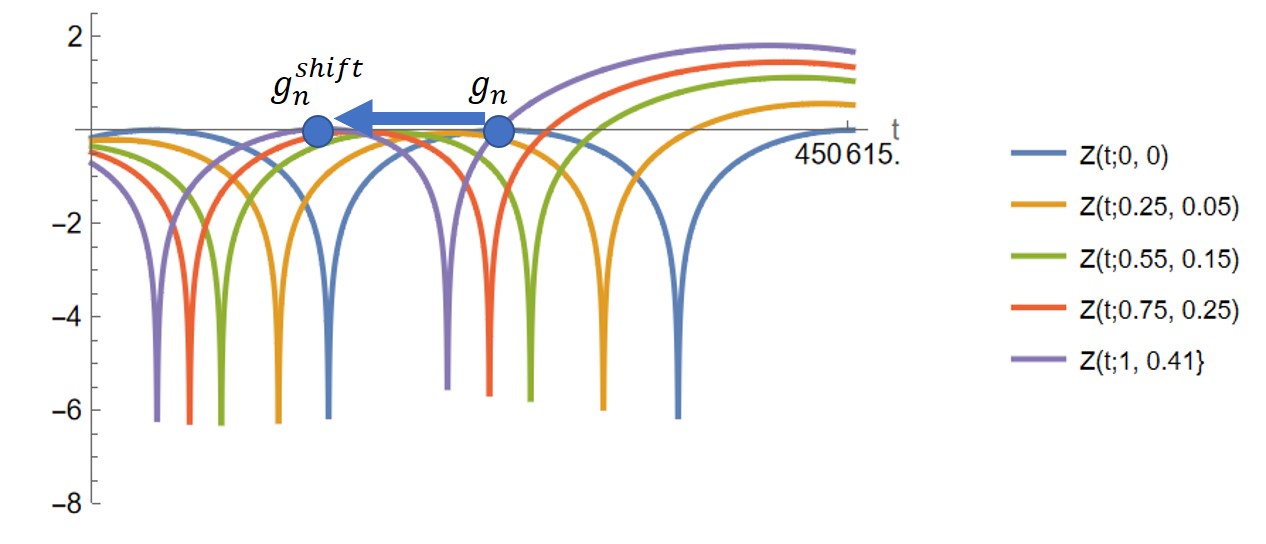} 	
		\caption{\small{Graphs of $\ln \abs{Z_N(t ; r_1 ,r_2)}$ along the following points of the shifting curve:  $(0,0)$ (blue), $(0.25,0.05)$ (yellow), $(0.55,0.15)$ (green), (0.75,0.25) (red), $(1,0.41)$ (purple) for $N=268$ in the range $450613.58 \leq t \leq 450614.8$.}}
\label{fig:f25}
	\end{figure}
	
	 Figure \ref{fig:f25} shows the graphs of $\ln \abs{Z_N(t ; r_1 ,r_2)}$ along the following points $(r_1,r_2)$ of the shifting curve:  $(0,0)$ (blue), $(0.25,0.05)$ (yellow), $(0.55,0.15)$ (green), (0.75,0.25) (red), $(1,0.41)$ (purple) in the range $450613.58 \leq t \leq 450614.8$ and $N=268$. In particular, we see that as $r_1$ transitions from zero (blue) to $1$ (purple) the $n$-th extremal point transitions continuously to the left from $g_n$ to $g_n^{shift}$, while keeping the value of the discriminant fixed. 
	
	The next Fig. \ref{fig:f26} shows the graphs of $\ln \abs{Z_N(t ; r_1 ,r_2)}$ along the following points $(r_1,r_2)$ of the descending curve:  $(1,0.41)$ (blue), $(1,0.6)$ (yellow), $(1,0.8)$ (green), (1,0.95) (red), $(1,1)$ (purple) for $N=268$ in the range $450613.58 \leq t \leq 450614.8$. In this case, as $r_2$ transitions from $0.41$ (blue) to $1$ (purple) the value of the $n$-th extremal point, that is the discriminant, decreases continuously as it transitions from $g^{shift}_n$ to $g_n^{final}$. 
	\begin{figure}[ht!]
	\centering
		\includegraphics[scale=0.4]{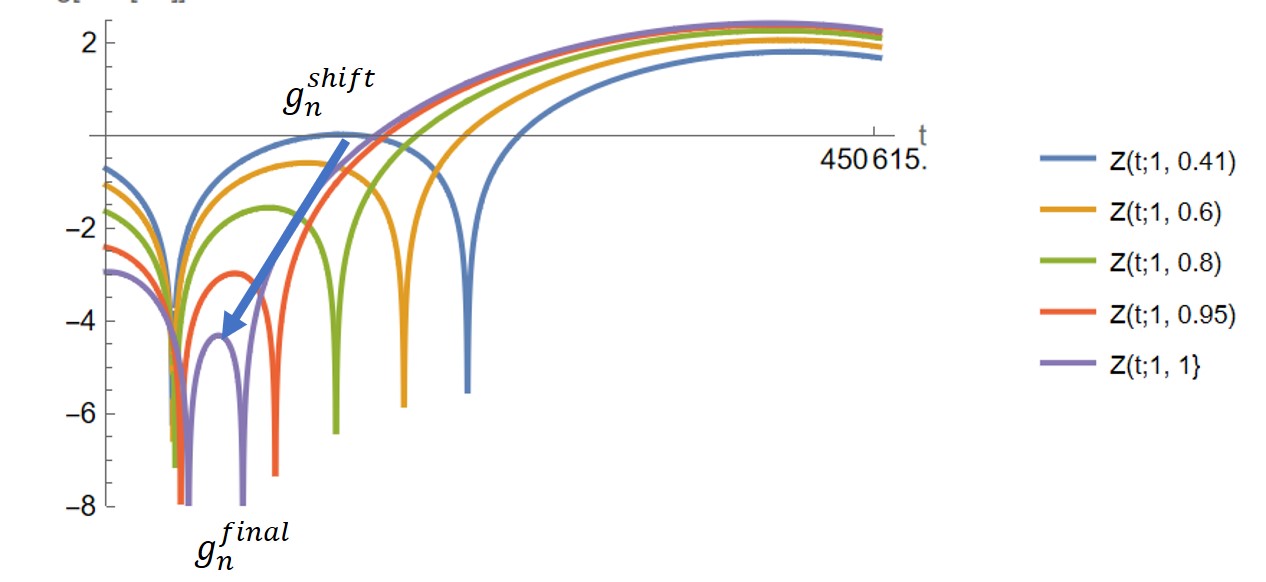} 	
		\caption{\small{graphs of $\ln \abs{Z_N(t ; r_1 ,r_2)}$ along the following points $(r_1,r_2)$ of the descending curve:  $(1,0.41)$ (blue), $(1,0.6)$ (yellow), $(1,0.8)$ (green), (1,0.95) (red), $(1,1)$ (purple) for $N=268$ in the range $450613.58 \leq t \leq 450614.8$.}}
\label{fig:f26}
	\end{figure} 
	
	As we can see, no collision of zero occurs along the corrected connecting curve, which is given by the shifting curve followed by the descending curve. In particular, this avoids the collisions forming for the linear curve, as seen in Example \ref{ex:g}, as required. 
\end{ex}

Let us note the following general remark about the shifting indices: 
\begin{rem} In general, if $k \in I_{shift}$ is a shifting index then an increase in $A_k(t)$ is expected to lead to a shift in $g_n ( \overline{a})$ as well as a local increase in the value of $(-1)^n \Delta_n(r ; \gamma)$, as seen in Table \ref{tab:values} of the above example.  
Indeed, according to Theorem \ref{thm:CM} terms in $k \in I_{shift}$ have high
\[ 
B_k(t):=\frac{1}{\sqrt{k+1} }\ln \left ( \frac{t}{2 \pi (k+1)^2} \right ) \sin ( \theta (t) - \ln(k+1) t).
\]
values. Such terms significantly contribute to $\nabla g_n$, which is also proportional to the Hessian $H_n$. 
Since for these terms $\sin ( \theta (t) - \ln(k+1) t)$ is relatively large, correspondingly $\cos ( \theta (t) - \ln(k+1) t)$ must be  relatively small. As a result, the first-order contribution of the $k$-th term to $\Delta_n(r)$ is minor. 
In particular, this small first-order effect will be negligible with respect to the effect of this $k$-th term on the second-order Hessian. Hence the $k$-th term leads to a shift of $g_n$ as well as a positive inflation in the value of $(-1)^n \Delta_n(r)$.  

\end{rem}

In conclusion, for the case of the Gram point $n=730119$ we saw that:
\begin{enumerate}
 \item The linear curve leads to a collision between the $730119$-th and $730120$-th zeros, occurring when the discriminant $\Delta_n(r)$ vanishes, as seen in Example \ref{ex:g}. 
 \item The corrected connecting curve, composed of its shifting stage and descending stage avoids such collisions, keeping $(-1)^n \Delta_n(r ; \gamma)$ positive for all $0 \leq r \leq 1$, as demonstrated in Example \ref{ex:corrected}.
\end{enumerate} 
 
 Conjecture \ref{con:2dim} proposes that a similar connecting curve exists for any isolated bad Gram point. The key feature is that, by definition, the shifting curve is constructed to leave the discriminant fixed $(-1)^n \Delta_n (r ; \gamma^{shift}) = 1 $, as seen in Fig. \ref{fig:f25}. Hence, it is the role of the decreasing curve to transition the value of $(-1)^n \Delta_n (r ; \gamma^{shift})$ from $1$ to its eventual level for $Z(t)$. This leads us to the following profound conjecture, which we view as the essence of the RH for isolated Gram points: 

\begin{conj}[RH Energy bound for isolated points] \label{con:energy} For any isolated Gram point the following bound holds 
\begin{equation} \label{eq:energy}
(-1)^n \Delta_n (r ; \gamma^{descend}) >0
\end{equation} for all $0 \leq r \leq 1$. 
\end{conj}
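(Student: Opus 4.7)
The plan is to reduce the conjecture to a finite-dimensional one-parameter estimate and then leverage the G-B-G viscosity bound as the main structural input. By construction of the shifting stage, the descending curve starts at the point $(1, r_2^{exit})$ where the Gram extremal has been relocated to $g_n^{shift}$ and where $(-1)^n \Delta_n = 1$. Along $\gamma^{descend}$, only $r_2$ varies, so one can regard $Z_N(t; 1, r_2)$ as a one-parameter family $\widetilde{Z}_N(t; s)$ with base point at $s = r_2^{exit}$ and target at $s = 1$. First I would rewrite the discriminant along this curve as $\widetilde{\Delta}_n(s) = \widetilde{Z}_N(\widetilde{g}_n(s); s)$, with $\widetilde{g}_n(r_2^{exit}) = g_n^{shift}$, so that the task becomes bounding $(-1)^n \widetilde{\Delta}_n(s)$ away from zero on $[r_2^{exit}, 1]$.

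The next step is to re-apply the first- and second-order expansions of Sections \ref{s:6}--\ref{s:7}, but re-centered at $s = r_2^{exit}$ rather than at the origin. The analogue of Theorem \ref{thm:first-order} would give a first-order term proportional to $\sum_{k \in I_{descend}} A_k(g_n^{shift})$, while the analogue of Theorem \ref{thm:CM} furnishes a Hessian-type correction of order $\bigl(\widetilde{Z}'(g_n^{shift})/\ln(g_n/(2\pi))\bigr)^2$. The plan then reduces to showing that the first-order slope is not so negative as to drive $\widetilde{\Delta}_n(s)$ through zero before $s = 1$, with the Hessian providing an upward convexity that can be used to close the estimate.

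Here the G-B-G viscosity bound $|\mu(g_n)| > 4$ enters decisively. Because $g_n$ is isolated, the lateral shift $g_n^{shift} - g_n$ produced by the shifting stage is large enough that the descending sum $\sum_{k \in I_{descend}} A_k$ is evaluated at a point genuinely separated from the first-order anomaly responsible for the violation at $g_n$. I would estimate $A_k(g_n^{shift}) - A_k(g_n)$ via the mean value theorem on $[g_n, g_n^{shift}]$, using Lemma \ref{Lem:7.2} to bound the shift length by $|Z'(g_n)|/\ln^2(g_n/(2\pi))$, and then compare against the original negative contribution $\sum_k A_k(g_n)$. The expected cancellation comes from the fact that the index partition $I_{shift}\sqcup I_{descend}$ is chosen so that $I_{descend}$ already carries a small net first-order effect at $g_n^{shift}$; this should be formalized as an orthogonality-type identity between the two index classes at the shifted evaluation point.

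The main obstacle, by far, will be making this cancellation rigorous and \emph{uniform} across $0 \leq r \leq 1$ rather than merely infinitesimally near the exit point. Two difficulties stand out. First, the splitting $\{1,\dots,N(n)\} = I_{shift} \sqcup I_{descend}$ depends on a threshold choice for the relative magnitudes of $\sin(\ln(k+1) g_n)$ and $\cos(\ln(k+1) g_n)$, so any uniform bound must be stable under reasonable perturbations of this threshold; a robust canonical choice may itself require separate justification. Second, and more fundamentally, the whole argument presupposes the empirically observed viscosity bound of Conjecture \ref{con:8.1}, which remains conjectural. Assuming that bound as a black box, the residual difficulty is the local-to-global passage along the segment, where higher-order Taylor remainders must be controlled by a convexity or monotonicity argument — likely by showing that $(-1)^n \widetilde{\Delta}_n(s)$ is concave along $\gamma^{descend}$ and hence lies above its secant joining the values at $s = r_2^{exit}$ and $s = 1$.
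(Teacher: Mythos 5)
The statement you are addressing is Conjecture~\ref{con:energy}. The paper does \emph{not} prove it: it is presented as an open conjecture, explicitly described there as ``currently still far out of reach,'' intrinsically dependent on the unproven G-B-G repulsion of Conjecture~\ref{con:8.1}, and such that ``its validation would nearly amount to an affirmation of the RH itself.'' There is therefore no proof in the paper to compare against, and any argument that appears to close it should be treated with corresponding skepticism.

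Your proposal is a strategy sketch rather than a proof, and you are candid about its two main obstructions. But even granting Conjecture~\ref{con:8.1} as a black box, the concluding step does not close. Bounding $(-1)^n \widetilde{\Delta}_n(s)$ from below by the secant joining its endpoint values via concavity is circular: concavity controls the interior \emph{given} both endpoints, and the value at $s=1$ is $(-1)^n\Delta_n(\overline 1)$, whose positivity is, by Theorem~\ref{thm:B}, the corrected Gram's law for $n$ and hence equivalent to the RH for that pair of zeros --- you would be assuming exactly what you set out to prove. (You also oscillate between ``upward convexity'' and ``concave,'' which are opposites; by Theorem~\ref{thm:CM} the Hessian gives $(-1)^n H_n(0)=2\bigl(Z'(g_n)/\ln(g_n/2\pi)\bigr)^2\ge 0$, a convex lift above the tangent near the origin, not concavity, so the secant picture does not hold even heuristically.) The re-centered Taylor analysis has a parallel difficulty: Theorems~\ref{thm:first-order} and~\ref{thm:CM} are computed at $\overline a=\overline 0$, where $Z''_N(g_n;\overline 0)=\tfrac{(-1)^{n+1}}{4}\ln^2\!\left(\tfrac{g_n}{2\pi}\right)$ and $(-1)^n\Delta_n(\overline 0)=1$ are explicit; the analogous sign and nondegeneracy facts at the exit point of the shifting stage are not free, and controlling the remainder uniformly out to $s=1$ is precisely where the entire content of the conjecture lives. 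What your sketch does get right is that it tracks the paper's motivational narrative in Section~\ref{s:12} (shifting/descending split, viscosity bound, Hessian as shift measure), but the paper offers that narrative as plausibility for the conjecture, not as a route to proof.
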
 

Conjecture \(\ref{con:energy}\) is currently still far out of reach, partially because it intrinsically relies on the G-B-G repulsion as conjectured in Conjecture \(\ref{con:8.1}\). Specifically, since the general separation of the indices into shifting and decreasing is fundamentally based on the infinitesimal G-B-G property, which by itself still requires formal proof as explained in Section \ref{s:10}. We refer to this as an 'energy bound' conjecture to intuitively convey that after the shifting stage has been exhausted, the decreasing curve is hypothesized to lack sufficient 'energy' to cause the discriminant to vanish, or in other words, to lead to a collision of the corresponding consecutive zeros. This is conjectured to be a deeply rooted characteristic of the \(Z\)-functions, playing a pivotal role in governing the corrected Gram's law for isolated bad Gram points. In particular, given the conjectures intimate connection with the fundamental properties of the zeros of the $Z$-function, the formidable challenge of proving it becomes understandable, as its validation would nearly amount to an affirmation of the RH itself. Let us conclude this section with the following three remarks: 

\begin{rem}[The crucial role of the AFE] \label{rem:crucial} 
In Remark \ref{rem:2.1}, we have elaborated on the role of the more robust AFE \eqref{eq:Z-function} which involves a summation of terms up to $[\frac{t}{2}]$, in contrast to the Hardy-Littlewood classical AFE, where the summation involves substantially fewer terms. It is essential to emphasize that the difference between the two equations becomes particularly crucial concerning the energy bound \eqref{eq:energy}. Specifically, there exist Gram points for which the energy bound fails when applied using the Hardy-Littlewood AFE. For this reason, it becomes necessary to define the discriminant as a function of the $A$-parameter space of dimension $[\frac{t}{2}]$, where the higher-order terms seem to act as regulators, preventing the discriminant from vanishing, an aspect that still requires further exploration.
\end{rem}

\begin{rem}[The connecting curve as a non-linear optimization problem]
Conjecture \ref{con:2dim} is concerned with finding a curve connecting $Z_0(t)$ to $Z_N(t)$ in a way that does not pass through the region $\Delta_n( \overline{a})<0$.  A conventional theoretical tool for solving such non-linear optimization issues optimally is the Karush-Kuhn-Tucker theorem. This theorem extends Lagrange multipliers to scenarios involving inequality constraints, as referenced in \cite{Kar,KT}.
If one is willing to ease on optimality and opt for naturality instead, such a curve is, of course, not unique. In particular, our conjectured approach, where a corrected connecting curve comprises shifting and descending stages, while intuitive might not be the exclusive approach. Let us mention that an alternative approach, which also merits investigation, might involve a three-stage construction of the connecting curve along the following lines: 
\begin{enumerate}
    \item Initially, follow the linear curve until the discriminant vanishes. If the discriminant remains non-negative for all \(0 \leq r \leq 1\) no further correction is necessary.
    \item In case of a vanishing discriminant, continue along the discriminant hyper-surface \(\Delta(\overline{a})=0\) until reaching a point of exit, determined by minimization of the Hessian.
    \item Finally, proceed linearly within the \(A\)-parameter space towards \(Z_N(t)\), starting from the point identified in the previous step.
\end{enumerate}
\end{rem}

While our discussion so far has mainly focused on isolated bad Gram points, the RH requires to obtain the corrected Gram's law for all Gram points, including those appearing in Gram blocks of arbitrary length. The motivation for considering the isolated case is that although it can be viewed as the next level of complexity after the trivial case of good Gram points, nevertheless it already reveals the fundamental properties and challenges arising in the general case. In particular, even in the isolated case we are left with the two new fundamental open questions of the repulsion and energy property, elaborated on in this work in Conjecture \ref{con:8.1} and the Conjecture \ref{con:energy}, respectively. In the following last remark we would want to expand on our expectations regarding the extension of our results to the general case: 

\begin{rem}[The corrected Rosser law] Historically, the concept of Gram block's was introduced in a naive attempt to introduce a 'correction of Gram's law', known as Rosser's law, postulating that a Gram block of length $N$ is expected to contain exactly $N$ zeros, see \cite{Ro}. However, as for Gram's law, exceptions to this heuristic have been observed, the first violation occurring around the $13999826$-th Gram point, as shown by Lehman in \cite{Le}. In view of our approach, the reason for this is that Rosser's rule, like Gram's law, is static as it fixes the position of the two good Gram points at its boundaries. In similar lines to the discussion in Section \ref{s:2} we view Rosser's law as a property of the core $Z_0(t)$ rather than that of $Z(t)$. 

 In particular, when taking into the account the dynamic transition from the core $Z_0(t)$ to $Z(t)$ we observe that the block as a whole does not need to be confined to its original position for $Z_0(t)$ and could be shifted altogether. In such a case the corresponding shift in its boundaries needs to be taken into account, as well. The challenge arising for non-isolated Gram blocks is that the individual bad Gram points within the interior of the blocks can actually move in different paces, which adds additional complexity. In particular, in such cases, we postulate that a separate collection of shifting parameters should be allocated for each of the bad Gram points within the block requiring a more sensitive tuning, to avoid collisions. Such Gram blocks will require a dimension reduction to an $N$-dimensional space, generalizing the $2$-dimensional method presented above in the isolated case. Within this space it would be required to identify connecting curves avoiding the union of the discriminant hyper-surfaces of all the points of the block, assuring that no collisions of zeros occur as the block shifts as a whole. Moreover, we suggest that a viscosity bound should be generalized for Gram blocks as a whole. Again, taking into account that fundamental questions still remain open in the isolated case, the full description of the non-isolated case remains out-of-reach at this point. 
\end{rem}

 \section{Summary and Concluding Remarks} \label{s:13}
 The Riemann Hypothesis postulates that all the non-trivial solutions of the equation \( Z(t)=0 \) must be real. In algebraic geometry one has the powerful invariant of the discriminant, which can be seen as a measurement for the realness of zeros of algebraic equations. In this work, we have endeavoured to extend the idea of the discriminant into the transcendental realm of the \( Z(t) \) function. By their very nature, discriminants act as an invariant for a family of functions. Building upon this, we introduced the novel concept of the \( A \)-parametrized space \( \mathcal{Z}_N \) whose elements are given by
\begin{equation}
Z_N(t ; \overline{a}) = \cos(\theta(t)) + \sum_{k=1}^{N} \frac{a_k}{\sqrt{k+1}} \cos \left( \theta (t) - \ln(k+1) t \right),
\end{equation}
where \( \overline{a} = (a_1, \dots, a_N) \in \mathbb{R}^N \) for any \( N \in \mathbb{N} \). In the course of our study, we introduced the local discriminant for a pair of consecutive zeros, \( \Delta_n(\overline{a}) \), defined within the parameter space of dimension \( N(n) := \left [ \frac{\abs{g_n}}{2} \right ] \). This newly defined discriminant has been shown to unveil a wealth of significant new results regarding the zeros of $Z(t)$. To summarize the new results proved in this work:

\begin{enumerate}
    \item We demonstrated that our corrected Gram's law, \( (-1)^n \Delta_n(\overline{1}) > 0 \), is equivalent to the Riemann Hypothesis, as detailed in Theorem \ref{thm:B}.
    
    \item We have shown that the classical Gram's law arises as the first-order approximation of our corrected Gram's law for the linear curve $Z_N(t; r)$ in parameter space, as described in Theorem \ref{thm:first-order}.
    
    \item An examination of the second-order Hessian of our corrected law, in this setting, revealed its relation to shifts of the Gram points along the \( t \)-axis. This connection is elaborated upon in Corollary \ref{Hess-grad}.
    
    \item Based on this discriminant analysis, we identified a previously unobserved numerical repulsion relationship: \( \left| Z'(g_n) \right| > 4 \left| Z(g_n) \right| \) which is observed to hold for isolated bad Gram points. The observed repulsion hints at a natural partitioning of the parameter space into shifting and descending indices, suggesting a dimension reduction for the problem of identifying a corrected connecting curve for isolated Gram points. Consequently, this insight guided us in suggesting an optimization framework aimed at providing a universal validation of the corrected Gram's law, which we have illustrated via various examples.

 \item Our analysis of the Davenport-Heilbronn function highlighted its distinct behaviour when compared with the \( Z \)-function. Notably, we have shown that the Davenport-Heilbronn function does not admit the repulsion property observed for $Z(t)$ in Corollary \ref{cor:DH}. This sheds new light on the elusive question regarding the inherent differences between these two functions.
\end{enumerate}

Our study also unveiled a few fundamental open questions, such as the repulsion relation of Conjecture \ref{con:8.1} and the energy bound of Conjecture \ref{con:energy}. Collectively, the introduction of $\Delta_n(\overline{a})$ together with the conjectures, empirical discoveries and theorems established in this work, contribute to constructing a robust, natural, long sought-after, plausibility argument for the Riemann Hypothesis. Furthermore, they introduce a new dynamical approach for the further study of the zeros of the $Z$-function and related functions.

\bibliographystyle{plain} 
\bibliography{references} 

\end{document}